\newtheorem{theorem}{Theorem}[section]
\newtheorem{lemma}[theorem]{Lemma}
\newtheorem{corollary}[theorem]{Corollary}
\newtheorem{proposition}[theorem]{Proposition}
\theoremstyle{definition}
\newtheorem{definition}[theorem]{Definition}
\newtheorem{example}[theorem]{Example}
\newtheorem{question}[theorem]{Question}
\theoremstyle{remark}
\newtheorem{remark}[theorem]{Remark}
\newcommand{\Dcal}{\ensuremath{\mathcal{D}}}
\newcommand{\Scal}{\ensuremath{\mathcal{S}}}
\newcommand{\Pcal}{\ensuremath{\mathcal{P}}}
\newcommand{\Xcal}{\ensuremath{\mathcal{X}}}
\newcommand{\Tcal}{\ensuremath{\mathcal{T}}}
\newcommand{\Ccal}{\ensuremath{\mathcal{C}}}
\newcommand{\Ucal}{\ensuremath{\mathcal{U}}}
\newcommand{\ra}{\rightarrow}
\numberwithin{equation}{section}
\begin{document}
\title{Universal localisations and tilting modules for finite dimensional algebras}
\author{Frederik Marks}
\address{Frederik Marks, Institute of algebra and number theory, University of Stuttgart, Pfaffenwaldring 57, D-70569 Stuttgart, Germany}
\email{marks@mathematik.uni-stuttgart.de}
\thanks{The author is grateful to Steffen Koenig, Julian K\"ulshammer and Jorge Vit\'oria for carefully reading a first version of this text and making helpful comments and suggestions. The author is supported by DFG-SPP 1489.}

\begin{abstract}
We study universal localisations, in the sense of Cohn and Schofield, for finite dimensional algebras and classify them by certain subcategories of our initial module category. A complete classification is presented in the hereditary case as well as for Nakayama algebras and local algebras. Furthermore, for hereditary algebras, we establish a correspondence between finite dimensional universal localisations and finitely generated support tilting modules. In the Nakayama case, we get a similar result using $\tau$-tilting modules, which were recently introduced by Adachi, Iyama and Reiten.

{\bf Keywords:} universal localisation; tilting module; $\tau$-tilting module; finite dimensional algebra.
\end{abstract}
\maketitle

\tableofcontents

\section{Introduction}
In recent years, universal localisations, as introduced by Cohn (\cite{Cohn}) and Schofield (\cite{Sch}), became a useful tool in representation theory. They were studied in the context of tilting theory (\cite{AA},\cite{AS}) and with respect to finitely presented algebras (\cite{NRS}), showing that every finitely presented algebra is Morita equivalent to a universal localisation of a finite dimensional algebra. Furthermore, universal localisations turned out to be useful to construct recollements of derived module categories (\cite{ALK1},\cite{ALK2},\cite{CX1},\cite{CX2},\cite{CX3}). 
Despite these developments and new applications the concept of universal localisation still seems to be rather abstract and mysterious. Very few complete answers can be given. In the hereditary case, a classification of all universal localisations (up to equivalence) was obtained in \cite{KS} and \cite{Sch4}. In \cite{KS}, it was shown that for hereditary rings universal localisations are described by homological ring epimorphisms. These are epimorphisms in the category of rings (with unit) fulfilling a nice homological property. In general, this equivalence is well-known not to hold. On the one hand, universal localisations do not always yield homological ring epimorphims. A list of examples was constructed in \cite{NRS}. The reverse implication, on the other hand, does not hold due to an (non-noetherian) example in \cite{K}. However, the question of which (homological) ring epimorphisms for a given ring are universal localisations seems widely open (see \cite{MV} for a partial answer).

One motivation for the present work is to consider this question for a finite dimensional $\mathbb{K}$-algebra $A$, where $\mathbb{K}$ denotes an algebraically closed field. We suggest an approach, initially motivated by \cite{Sch3}, based on studying pairs of orthogonal subcategories in $A\mbox{-}mod$ (see Proposition \ref{prop uniloc}). This approach relies on the observation that for a finite dimensional algebra $A$ every universal localisation $A_\Sigma$ is given with respect to a certain set of finitely generated $A$-modules. In the hereditary case, our methods restrict to the consideration of $Ext$-orthogonal pairs, as studied in \cite{KS}. 

Another motivation for the present work is to study the interplay of universal localisations and tilting modules for finite dimensional $\mathbb{K}$-algebras. On the one hand, it is well-known that certain monomorphic universal localisations induce (possibly infinitely generated) tilting modules, as studied in \cite{AS}. In some cases, a classification of all tilting modules was obtained using this construction, e.g., for Dedekind domains (\cite{AS}) or for infinitely generated tilting modules over tame hereditary algebras (\cite{AS2}). On the other hand, a classification of universal localisations in terms of certain subcategories of the initial module category will lead to a different perspective. For a finite dimensional and hereditary $\mathbb{K}$-algebra $A$, we can use work of Ingalls and Thomas (\cite{IT}), who classified the finitely generated wide subcategories of $A\mbox{-}mod$ with respect to support tilting modules. In this context, we obtain the following result.
\\

\noindent\textbf{Theorem A} (Theorem \ref{hered tilt}, Proposition \ref{prop split}) \textit{ Let $A$ be a finite dimensional and hereditary $\mathbb{K}$-algebra. There are bijections (related by restriction) between:
\begin{enumerate}
\item the set of equivalence classes of finitely generated support tilting $A$-modules and the set of epiclasses of finite dimensional universal localisations of $A$;
\item the set of equivalence classes of finitely generated tilting $A$-modules and the set of epiclasses of finite dimensional and monomorphic universal localisations of $A$;
\item the set of equivalence classes of finitely generated support tilting $A/AeA$-modules for an idempotent $e$ in $A$ and the set of epiclasses of finite dimensional universal localisations of $A$ with $A_{\Sigma}\otimes_A Ae=0$.
\end{enumerate}
Moreover, the universal localisation associated to a tilting $A$-module $T$ is given by localising at the set of non split-projective indecomposable direct summands of $T$ in its torsion class.}\\

As a consequence, all finitely generated tilting $A$-modules are (up to equivalence) of the form $A_{\Sigma}\oplus A_{\Sigma}/A$ for some finite dimensional and monomorphic universal localisation $A_{\Sigma}$ of $A$ (see Corollary \ref{cortilt}).
In the tame case, this corollary completes the classification of tilting modules started in \cite{AS2}. Note that not every infinitely generated tilting module over a tame hereditary $\mathbb{K}$-algebra arises from universal localisation (\cite{AS2}). 

Leaving the hereditary case, we will concentrate on universal localisations for Nakayama algebras. On the one hand, these algebras are sufficiently well understood and they share particularly nice homological properties. On the other hand, from a representation theoretical point of view Nakayama algebras are far away from hereditary algebras. They allow to approach universal localisations from a different perspective and may help to get a clearer picture in the general setting. Using work of Dichev (\cite{D}) on the wide subcategories of $A\mbox{-}mod$ for a Nakayama algebra $A$, we obtain a complete classification of the universal localisations by considering orthogonal collections of indecomposable $A$-modules.\\

\noindent\textbf{Theorem B} (Theorem \ref{Main Nak}, Corollary \ref{locNak}) \textit{Let $A$ be a Nakayama algebra. There is a bijection between the universal localisations of $A$ (up to epiclasses) and the sets $\{X_1,...,X_s\}$ of indecomposable $A$-modules (up to isomorphism) with $End_A(X_i)\cong\mathbb{K}$\, for all $i$ and $Hom_A(X_i,X_j)=0$\, for all $i\not= j$. Moreover, a ring epimorphism $A\rightarrow B$ is a universal localisation if and only if $Tor_1^A(B,B)=0$.}\\

In particular, all homological ring epimorphisms are universal localisations. As an application, we provide a combinatorial classification of the homological ring epimorphisms for self-injective Nakayama algebras (see Theorem \ref{class hom}). This result might be of its own interest to study the structure of the derived category of $A$-modules, as suggested in \cite[\S 7.2, Question 4]{CX3}.

Finally, we ask for an analogue of Theorem A for Nakayama algebras, establishing a link between universal localisations and certain generalised tilting modules. It turns out that the right notion for this purpose is given by $\tau$-tilting modules, recently introduced by Adachi, Iyama and Reiten (\cite{AIR}) in order to complete classical tilting theory from the perspective of mutation (compare Section \ref{tilting}). In a first step, we prove a correspondence between the torsion classes and the wide subcategories of our initial module category (Proposition \ref{torsNak}), which gives rise to a bijection between $\tau$-tilting modules and universal localisations.\\

\noindent\textbf{Theorem C} (Theorem \ref{Naktilt}, Theorem \ref{Nak comp}) \textit{ Let $A$ be a Nakayama algebra. There are bijections (related by restriction) between:
\begin{enumerate}
\item the set of equivalence classes of support $\tau$-tilting $A$-modules and the set of epiclasses of universal localisations of $A$;
\item the set of equivalence classes of $\tau$-tilting $A$-modules and the set of epiclasses of universal localisations of $A$ with $A_{\Sigma}\otimes_A Ae\not= 0$ for all idempotents $e\not= 0$ in $A$;
\item the set of equivalence classes of support $\tau$-tilting $A/AeA$-modules for an idempotent $e$ in $A$ and the set of epiclasses of universal localisations of $A$ with $A_{\Sigma}\otimes_A Ae=0$.
\end{enumerate}
Moreover, the universal localisation associated to a $\tau$-tilting module $T$ is given by localising at the set of non split-projective indecomposable direct summands of $T$ in its torsion class.}\\

As a consequence, we can translate some of the combinatorics for universal localisations (see, for example, Corollary \ref{cor pic}) to the theory of $\tau$-tilting modules for Nakayama algebras. Note that even though Theorem A and Theorem C are very similar in nature, their proofs differ significantly. This is partly due to the fact that already the classification of the universal localisations uses different techniques in both cases. Moreover, it turns out that for a Nakayama algebra $A$ the correspondence between the wide subcategories and the torsion classes in $A\mbox{-}mod$ is more involved when compared to the hereditary case.\\

The paper is organized as follows. In Section \ref{Pre}, we start by fixing some notation and introducing the main concepts needed later on. Section \ref{UNIFDA} discusses universal localisations for finite dimensional $\mathbb{K}$-algebras and contains some partial answers to the question of which (homological) ring epimorphisms are given by universal localisations (see Lemma \ref{local} for local algebras and Corollary \ref{cor hom} for some self-injective algebras). Note that we will be mainly interested in universal localisations $A\rightarrow A_{\Sigma}$, where $A_{\Sigma}$ is again a finite dimensional $\mathbb{K}$-algebra. In Section \ref{TULHA}, we prove Theorem A and discuss some consequences of the established bijections. Section \ref{NAKUNI} is devoted to the proof of Theorem B. As an application, we study homological ring epimorphisms for Nakayama algebras (see Subsection \ref{NAKUNI2}). Finally, in Section \ref{NAKTILT}, we prove Theorem C in two steps. First, we combine work of Adachi, Iyama and Reiten on $\tau$-tilting modules (\cite{AIR}) with some of the results obtained in Section \ref{NAKUNI} to get the wanted bijections (see Corollary \ref{Nak bij}). The rest of the section is devoted to the comparison of a support $\tau$-tilting module and its associated universal localisation.

\section{Preliminaries}\label{Pre}

\subsection{Notation}\label{Pre Not}
Throughout, $A$ will denote a finite dimensional algebra over an algebraically closed field $\mathbb{K}$ and all $A$-modules are assumed to be left modules, unless otherwise stated. The category of all (respectively, all finitely generated) $A$-modules will be denoted by $A\mbox{-}Mod$ (respectively, $A\mbox{-}mod$). By $A\mbox{-}ind$ we denote the set containing one representative of each isomorphism class of finitely generated indecomposable $A$-modules. Subcategories of our initial module category are always assumed to be full and closed under isomorphisms. We say that a subcategory $\Ccal$ of $A\mbox{-}mod$, which is closed under direct summands and (finite) direct sums, has a \textbf{finite generator}, if there is an $A$-module $T$ in $\Ccal$ such that for all $X$ in $\Ccal$ there is some $d\in\mathbb{N}$ and a surjection $T^d\rightarrow X$. By $GenT$ we denote the subcategory of $A\mbox{-}mod$ containing all modules which are generated by an $A$-module $T$ and by $addT$ we denote the subcategory of $A\mbox{-}mod$ consisting of all direct summands of (finite) direct sums of copies of $T$. We call a subcategory $\Ccal$ in $A\mbox{-}mod$
\begin{itemize}
\item \textbf{wide}, if $\Ccal$ is exact abelian and extension-closed;
\item \textbf{f-wide}, if $\Ccal$ is wide and has a finite generator;
\item \textbf{bireflective}, if $\Ccal$ is exact abelian and has a finite generator;
\item \textbf{torsion}, if $\Ccal$ is closed under quotients and extensions;
\item \textbf{f-torsion}, if $\Ccal$ is torsion and has a finite generator.
\end{itemize}
The set of all wide (respectively, f-wide, torsion or f-torsion) subcategories of $A\mbox{-}mod$ is denoted accordingly.
We say that an $A$-module $P$ in $\Ccal$ is \textbf{split-projective}, if all surjective morphisms $X\rightarrow P$ in $\Ccal$ split, and \textbf{Ext-projective}, if $Ext_A^1(P,X)=0$ for all $X$ in $\Ccal$.
For a finitely generated $A$-module $X$ we denote by 
$$\xymatrix{...\ar[r] & P_2^X\ar[r]^{\sigma_1^X} & P_1^X\ar[r]^{\sigma_0^X} & P_0^X\ar[r]^{\pi^X} & X\ar[r] & 0}$$
the minimal projective resolution of $X$ in $A\mbox{-}mod$. We are also interested in certain subcategories which are orthogonal to a subcategory $\Ccal$ of $A\mbox{-}mod$, namely
$$^{\perp}\Ccal:=\{X\in A\mbox{-}mod\mid Hom_A(X,\Ccal)=Ext_A^1(X,\Ccal)=0\};$$
$$\Ccal^{\perp}:=\{X\in A\mbox{-}mod\mid Hom_A(\Ccal,X)=Ext_A^1(\Ccal,X)=0\};$$
$$^*\!\Ccal:=\{X\in A\mbox{-}mod\mid Hom_A(X,\Ccal)=Ext_A^1(X,\Ccal)=Hom_A(\sigma_1^X,\Ccal)=0\}$$
$$=\{X\in A\mbox{-}mod\mid Hom_A(\sigma_0^X,C)\text{ an isomorphism for all } C\in\Ccal\};$$
$$\Ccal^*:=\{X\in A\mbox{-}mod\mid Hom_A(\Ccal,X)=Ext_A^1(\Ccal,X)=Hom_A(\sigma_1^{\Ccal},X)=0\}$$
$$=\{X\in A\mbox{-}mod\mid Hom_A(\sigma_0^C,X)\text{ an isomorphism for all } C\in\Ccal\}.$$
Note that for $^*\!\Ccal$ and $\Ccal^*$ to be well-defined, it is actually necessary to consider \textit{minimal} projective resolutions of the corresponding $A$-modules. Certainly, if $A$ is hereditary, we have that $^{\perp}\Ccal=^*\!\!\!\Ccal$ and $\Ccal^{\perp}=\Ccal^*$.

\subsection{Ring epimorphisms and universal localisations}
Recall that a \textbf{ring epimorphism} is an epimorphism in the category of rings with unit. Two ring epimorphisms $f:A\rightarrow B$ and $g:A\rightarrow C$ are equivalent, if there is a ring isomorphism $h: B\rightarrow C$ such that $g=hf$. We then say that $B$ and $C$ lie in the same epiclass of $A$. We have the following well-known description of a ring epimorphism.

\begin{proposition}[\cite{St}, Proposition XI.1.2]\label{ring epi}
For a ring homomorphism $f:A\rightarrow B$, the following statements are equivalent.
\begin{enumerate}
\item $f$ is a ring epimorphism;
\item The restriction functor $f_*: B\mbox{-}Mod\rightarrow A\mbox{-}Mod$ is fully faithful.
\end{enumerate}
\end{proposition}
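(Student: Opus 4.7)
The plan is to go via Silver's characterisation: a ring homomorphism $f:A\to B$ is an epimorphism if and only if the multiplication map $\mu:B\otimes_A B\to B$ is an isomorphism, equivalently $b\otimes 1=1\otimes b$ in $B\otimes_A B$ for every $b\in B$. Once Silver's criterion is in place, both implications of the proposition fall out of the adjunction $B\otimes_A -\dashv f_*$ between extension and restriction of scalars.

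For Silver's criterion, if $b\otimes 1=1\otimes b$ holds, then given $g,h:B\to C$ with $gf=hf$ the $A$-balanced map $\psi:B\otimes_A B\to C$, $b_1\otimes b_2\mapsto g(b_1)h(b_2)$, is well-defined (because $g$ and $h$ agree on $f(A)$), and the identity $g(b)=\psi(b\otimes 1)=\psi(1\otimes b)=h(b)$ gives $g=h$. Conversely, assuming $f$ is a ring epimorphism, I would form the trivial extension $R=B\ltimes(B\otimes_A B)$ with multiplication $(b,x)(b',x')=(bb',\,bx'+xb')$ and consider the two ring homomorphisms $B\to R$ sending $b$ to $(b,b\otimes 1-1\otimes b)$ and to $(b,0)$ respectively. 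Both restrict to $(f(a),0)$ on $A$, since $f(a)\otimes 1=1\otimes f(a)$ in $B\otimes_A B$, so the ring epi property forces them to agree and yields $b\otimes 1=1\otimes b$ for all $b\in B$.

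Granting Silver, the implication $(1)\Rightarrow(2)$ follows by recalling that the right adjoint $f_*$ is fully faithful precisely when the counit $\epsilon_N:B\otimes_A f_*N\to N$ is an isomorphism for every $B$-module $N$, and factoring $\epsilon_N$ as $B\otimes_A N\cong B\otimes_A B\otimes_B N\xrightarrow{\mu\otimes_B N} B\otimes_B N\cong N$, which is an isomorphism as soon as $\mu$ is. For $(2)\Rightarrow(1)$, I would equip $C$ with two $B$-module structures $C_g$ and $C_h$ coming from ring homomorphisms $g,h:B\to C$ with $gf=hf$; since the restrictions to $A$ coincide, the identity on $C$ is $A$-linear between $f_*C_g$ and $f_*C_h$, hence $B$-linear by fullness, and this forces $g(b)c=h(b)c$ for all $b\in B$, $c\in C$, so that evaluation at $c=1$ gives $g=h$. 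The main obstacle I expect is the reverse direction of Silver's criterion: because $B\otimes_A B$ carries no natural ring structure when $B$ is non-commutative, one has to manufacture the two witnessing ring homomorphisms inside an auxiliary ring, which is precisely the role played by the trivial extension $R$.
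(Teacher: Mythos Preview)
Your argument is correct. The paper itself does not prove this proposition at all: it merely cites it from Stenstr\"om (\cite{St}, Proposition~XI.1.2) and moves on, so there is no ``paper's own proof'' to compare against. What you have written is one of the standard routes to this classical fact --- Silver's characterisation via $b\otimes 1=1\otimes b$ in $B\otimes_A B$, combined with the adjunction $B\otimes_A-\dashv f_*$ --- and every step is sound. The trivial-extension trick for the reverse direction of Silver's criterion is exactly the right device, and your verification that $b\mapsto(b,\,b\otimes 1-1\otimes b)$ is a ring homomorphism (the cross terms $-b\otimes b'+b\otimes b'$ cancel) goes through. For $(2)\Rightarrow(1)$ you only invoke fullness, which is fine since $f_*$ is automatically faithful; your lift of $\mathrm{id}_C$ to a $B$-linear map is forced to equal $\mathrm{id}_C$ as a set map, giving $g(b)c=h(b)c$ and hence $g=h$.
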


Since in our setting $A$ will always denote a finite dimensional $\mathbb{K}$-algebra, a ring epimorphism $f:A\rightarrow B$ also turns $B$ into a $\mathbb{K}$-algebra. We call $f$ \textbf{finite dimensional}, if $B$ is a finite dimensional $\mathbb{K}$-algebra. Note that in this case restriction induces a fully faithful functor
$$f_*:B\mbox{-}mod\rightarrow A\mbox{-}mod.$$
For a ring epimorphism $f:A\rightarrow B$ we denote by $\Xcal_B$ the essential image of the restriction functor in $A\mbox{-}Mod$, respectively $A\mbox{-}mod$, if $f$ is finite dimensional.

\begin{theorem}[\cite{GdP}, Theorem 1.2, \cite{GeLe}, \cite{I}, Theorem 1.6.1]\label{bireflective}
There is a bijection between:
\begin{enumerate}
\item ring epimorphisms $A\ra B$ up to equivalence;
\item bireflective subcategories $\Xcal_B$ of $A\mbox{-}Mod$, i.e., subcategories of $A\mbox{-}Mod$ closed under products, coproducts, kernels and cokernels.
\end{enumerate}
This bijection can be restricted to a bijection between:
\begin{enumerate}
\item finite dimensional ring epimorphisms $A\ra B$ up to equivalence;
\item bireflective subcategories $\Xcal_B$ of $A\mbox{-}mod$.
\end{enumerate}
\end{theorem}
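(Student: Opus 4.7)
The plan is to produce mutually inverse assignments between the two sides of the correspondence. For the forward direction, given a ring epimorphism $f: A\ra B$, I would invoke Proposition \ref{ring epi} to see that the restriction functor $f_*: B\mbox{-}Mod\ra A\mbox{-}Mod$ is fully faithful. Since $f_*$ moreover admits a left adjoint $B\otimes_A -$ and a right adjoint $\Hom_A(B,-)$, it preserves all limits and colimits. Combining full-faithfulness with this preservation, the essential image $\Xcal_B$ is closed under products, coproducts, kernels and cokernels formed in $A\mbox{-}Mod$: such (co)limits of diagrams in $\Xcal_B$ are canonically isomorphic, via $f_*$, to the (co)limits computed in $B\mbox{-}Mod$, hence already belong to $\Xcal_B$.

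For the reverse direction, starting from a bireflective subcategory $\Xcal$ of $A\mbox{-}Mod$, closure under products and kernels makes the inclusion $\iota: \Xcal\hookrightarrow A\mbox{-}Mod$ continuous, and the adjoint functor theorem (applicable since $A\mbox{-}Mod$ is locally presentable) yields a left adjoint $L$. I would then set $B := L(A)$ and equip it with a ring structure as follows: the universal property of $L$ applied to the right-multiplication maps $A\ra A$ produces a right $A$-action on $B$ factoring through the reflection unit $\eta_A$, and the adjunction isomorphism $\Hom_A(A,B) \cong \Hom_\Xcal(L(A),B)$ promotes this to an associative and unital multiplication on $B$ making $\eta_A: A\ra B$ a ring homomorphism. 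The map $\eta_A$ is then a ring epimorphism because any two ring maps $g,h: B\ra C$ agreeing after pre-composition with $\eta_A$ equip $C$ with two $\Xcal$-reflections of $A$, which must coincide by the uniqueness of $L(A)$.

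Next, to confirm that the two constructions are mutually inverse, I would observe that for a ring epi $f: A\ra B$ the reflection associated to $\Xcal_B$ is $B\otimes_A -$, so that $L(A)\cong B$ with its original ring structure; conversely, an $A$-module $M$ lies in $\Xcal$ precisely when $\eta_M: M\ra L(M)$ is an isomorphism, which is equivalent to $M$ carrying a (unique) $B$-module structure compatible with $\eta_A$, giving $\Xcal = \Xcal_B$. For the restriction to the finite dimensional case, note that $B \in \Xcal_B$ always (as the image of $_BB$), so $B$ is finite dimensional over $\Kbb$ exactly when $\Xcal_B\subseteq A\mbox{-}mod$ (using that any finitely generated $B$-module is then finite dimensional over $\Kbb$, hence finitely generated over $A$); in the converse direction, the finite generator of a bireflective $\Xcal \subseteq A\mbox{-}mod$ plays the role of $_BB$ and forces $B = L(A)$ to be finite dimensional.

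The main obstacle is the construction of the ring structure on $B = L(A)$ and the verification that $\eta_A$ is actually a ring epimorphism; once this is set up, everything else reduces to routine manipulation with adjunctions and Proposition \ref{ring epi}.
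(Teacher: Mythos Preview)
The paper does not supply a proof of this theorem; it is stated with attribution to \cite{GdP}, \cite{GeLe}, and \cite{I} and used as a black box throughout. There is therefore nothing in the paper to compare your argument against.

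That said, your outline follows the standard route found in those references and is sound in its architecture: full faithfulness of $f_*$ together with the two adjoints $B\otimes_A-$ and $\Hom_A(B,-)$ gives closure of $\Xcal_B$ under all (co)limits, and conversely the reflection $L$ produced by the adjoint functor theorem yields $B=L(A)$ with a ring structure inherited from the monad $\iota L$. Two places deserve a bit more care. First, to invoke the adjoint functor theorem you should note that closure under coproducts and cokernels makes $\Xcal$ closed under all colimits (in particular filtered ones), so that $\Xcal$ is itself locally presentable and the inclusion is accessible; merely citing closure under products and kernels is not enough. Second, the identification $\Xcal=\Xcal_B$ is slightly circular as written: you assert that $\eta_M$ is an isomorphism iff $M$ admits a compatible $B$-module structure, but the clean way is to show directly that $L\cong B\otimes_A-$ (both are left adjoints to the same fully faithful inclusion once you check that every $B$-module lies in $\Xcal$, which follows because $B\in\Xcal$ and $\Xcal$ is closed under colimits), whence the essential images coincide. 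For the finite-dimensional restriction, your forward implication is fine; for the converse you should say explicitly that the finite generator $T$ of $\Xcal\subseteq A\text{-}mod$ forces the reflection $L(A)$ to embed in a finite power of $T$ (using that $\Xcal$ is exact abelian and $T$ generates), hence to be finite dimensional.
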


We will be mainly interested in finite dimensional epiclasses of $A$. This allows us to work in $A\mbox{-}mod$ instead of $A\mbox{-}Mod$. Necessary and sufficient conditions for a ring epimorphism to be finite dimensional are given in \cite{GdP} (see Proposition 2.2). In some cases, all epiclasses of $A$ are finite dimensional.

\begin{lemma}[\cite{GdP}, Corollary 2.3]\label{GdP87}
If $A$ is a representation-finite and finite dimensional $\mathbb{K}$-algebra, then all ring epimorphisms $A\ra B$ are finite dimensional. In particular, $B$ is again a representation-finite algebra.
\end{lemma}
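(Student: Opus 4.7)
The plan is to combine the bijection of Theorem \ref{bireflective} with a classical theorem of Auslander which states that a finite dimensional $\Kbb$-algebra is representation-finite if and only if every (left) module decomposes as a direct sum of finitely generated indecomposable modules. The crucial technical input will be the ring isomorphism $\End_A(B)\cong B^{op}$ arising from the fully-faithfulness of restriction in Proposition \ref{ring epi}; this will let us promote the evident finite-support statement about the unit $1_B$ into a genuine finiteness statement about $B$ itself.

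Starting from a ring epimorphism $f:A\to B$, let $\Xcal_B\subseteq A\mbox{-}Mod$ be the associated bireflective subcategory, and observe that $\Xcal_B$ is closed under direct summands (being closed under kernels of idempotents). Since $A$ is representation-finite, Auslander's theorem applied to the $A$-module $B\in\Xcal_B$ yields a decomposition $B\cong\bigoplus_{\alpha\in I}M_\alpha$ where each $M_\alpha$ is a finitely generated indecomposable $A$-module, necessarily lying in $\Xcal_B$. The unit $1_B$ has finite support in this decomposition, hence belongs to a finite subsum $M:=\bigoplus_{\alpha\in F}M_\alpha$ for some finite $F\subseteq I$. Let $p:B\twoheadrightarrow M\hookrightarrow B$ denote the corresponding $A$-linear idempotent endomorphism of $B$, which satisfies $p(1_B)=1_B$ by construction.

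The decisive step is to promote this to $p=\mathrm{id}_B$. By Proposition \ref{ring epi}, restriction is fully faithful, so $\End_A(B)\cong\End_B(B)\cong B^{op}$ via the evaluation map $\phi\mapsto\phi(1_B)$. Both $p$ and $\mathrm{id}_B$ are sent to $1_B$, so they coincide; hence $B=M$ is a finite direct sum of finitely generated $A$-modules and, in particular, finite dimensional over $\Kbb$. For the final assertion, the restriction functor $B\mbox{-}mod\to A\mbox{-}mod$ is fully faithful and reflects indecomposability (any idempotent decomposition over $A$ lies in $\End_A(M)=\End_B(M)$ and therefore already happens over $B$), so $B\mbox{-}ind$ injects into the finite set $A\mbox{-}ind$ and $B$ is representation-finite. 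I expect the only genuine obstacle to be the invocation of Auslander's structure theorem for modules over representation-finite algebras; once the existence of the decomposition $B\cong\bigoplus M_\alpha$ is in hand, the ring-theoretic identification $\End_A(B)\cong B^{op}$ collapses it to a finite sum almost formally.
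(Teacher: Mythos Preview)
Your proof is correct. Note, however, that the paper does not supply its own argument for this lemma; it simply cites \cite{GdP}, Corollary~2.3, so there is no in-paper proof to compare against.

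That said, your approach is sound and self-contained: Auslander's characterisation of representation-finite algebras gives the decomposition $_AB\cong\bigoplus_\alpha M_\alpha$ into finitely generated indecomposables, and the identification $\End_A(B)\cong\End_B(B)\cong B^{\mathrm{op}}$ (via Proposition~\ref{ring epi}) forces any $A$-linear idempotent fixing $1_B$ to be the identity, collapsing the sum to finitely many terms. The argument for representation-finiteness of $B$ is likewise correct, since full faithfulness of restriction preserves and reflects both isomorphisms and indecomposability. The only nontrivial external input is Auslander's theorem, which you rightly flag.
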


Sometimes it turns out to be useful to impose a homological condition on a ring epimorphism. Following \cite{GeLe}, we call a ring epimorphism $f:A\rightarrow B$ \textbf{homological}, if $Tor_i^A(B,B)=0$ for all $i>0$ or, equivalently, $Ext_A^i(M,N)\cong Ext_B^i(M,N)$ for all $B$-modules $M$ and $N$. Note that in this situation restriction on the derived module categories induces a fully faithful functor
$$D(f_*):\Dcal(B\mbox{-}Mod)\rightarrow\Dcal(A\mbox{-}Mod).$$

Examples of ring epimorphisms often arise from localisation techniques.
The following theorem defines and shows the existence of universal localisations.

\begin{theorem}[\cite{Sch}, Theorem 4.1]\label{uni loc}
Let $\Sigma$ be a set of maps between finitely generated projective $A$-modules. There is a ring $A_\Sigma$ and a ring homomorphism $f: A\rightarrow A_\Sigma$ such that
\begin{enumerate}
\item $A_\Sigma \otimes_A \sigma$ is an isomorphism of left $A$-modules for all $\sigma$ in $\Sigma$;
\item every ring homomorphism $g:A\rightarrow B$ such that $B\otimes_A \sigma$ is an isomorphism for all $\sigma$ in $\Sigma\,$ factors in a unique way through $f$, i.e., there is a commutative diagram of the form
\begin{equation}\nonumber
\xymatrix{A\ar[rr]^g\ar[rd]_{f}&&B\\ & A_\Sigma.\ar[ru]_{\exists! \tilde{g}}}
\end{equation}
\end{enumerate}
\end{theorem}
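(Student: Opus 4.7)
The plan is to construct $A_\Sigma$ explicitly via a generators-and-relations presentation in the category of unital associative rings, following the classical approach of Cohn and Schofield: one adjoins formal inverses to the maps in $\Sigma$ and then verifies the universal property by direct computation.

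Concretely, I would first reduce to the case in which every $\sigma \in \Sigma$ is a map $A^{n_\sigma} \to A^{n_\sigma}$ represented by a square matrix $M_\sigma \in M_{n_\sigma}(A)$. This is done by padding: given $\sigma: P \to Q$ with $P \oplus P' \cong A^n$ and $Q \oplus Q' \cong A^m$, one replaces $\sigma$ by a block matrix on $A^{n+m}$ whose invertibility in any $A$-algebra $B$ is equivalent to $B \otimes_A \sigma$ being an isomorphism, and then enlarges $\Sigma$ accordingly. With this reduction in hand, I would form the coproduct (in unital rings) $F$ of $A$ with the free ring on symbols $t^\sigma_{ij}$, for $\sigma \in \Sigma$ and $1 \le i,j \le n_\sigma$, arranged into matrices $T_\sigma \in M_{n_\sigma}(F)$. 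Define $A_\Sigma := F/I$ where $I$ is the two-sided ideal generated by the entries of $M_\sigma T_\sigma - I_{n_\sigma}$ and $T_\sigma M_\sigma - I_{n_\sigma}$ for all $\sigma \in \Sigma$, and let $f: A \to A_\Sigma$ be the induced map.

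Property (1) is then essentially tautological: $A_\Sigma \otimes_A \sigma$ becomes left multiplication by the image of $M_\sigma$ in $A_\Sigma$, and the image of $T_\sigma$ is a two-sided inverse by construction. For property (2), given $g: A \to B$ with $B \otimes_A \sigma$ an isomorphism for every $\sigma$, the inverse matrix $N_\sigma \in M_{n_\sigma}(B)$ of $g(M_\sigma)$ is uniquely determined (matrix inverses are unique). The universal property of the coproduct $F$ yields a unique ring map $F \to B$ restricting to $g$ on $A$ and sending $T_\sigma \mapsto N_\sigma$; it descends to $A_\Sigma$ because the defining relations are satisfied in $B$ by the choice of $N_\sigma$, giving the required $\tilde g$. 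Uniqueness is forced by the fact that $A$ together with the images of the $T_\sigma$ generate $A_\Sigma$ as a ring, and the values of the $T_\sigma$ under any factorisation of $g$ are pinned down by $g(M_\sigma)$.

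The main obstacle I expect is the padding/reduction step: one must check that the invertibility of $B \otimes_A \sigma$ for a map between arbitrary finitely generated projectives is functorially equivalent to the invertibility of an associated square matrix over $B$, independently of the auxiliary choices (complements and splittings). The cleanest alternative is to bypass this combinatorial bookkeeping entirely and instead represent the functor $B \mapsto \{\,g: A \to B \mid B \otimes_A \sigma \text{ is an isomorphism for all } \sigma \in \Sigma\,\}$ via the adjoint functor theorem, but the matrix construction is more concrete and is the one that connects directly to the Cohn--Schofield theory exploited in the rest of the paper.
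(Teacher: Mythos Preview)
The paper does not supply its own proof of this theorem; it is quoted from Schofield's monograph \cite{Sch} as a foundational background result, with no argument given. Your sketch is correct and is precisely the standard Cohn--Schofield construction found in that reference: one freely adjoins the entries of formal inverse matrices and quotients by the relations making them two-sided inverses, after which the universal property follows from uniqueness of matrix inverses.

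The padding step you flag is indeed the only place requiring care. One clean way to handle a map $\sigma: P \to Q$ between non-free projectives, avoiding the bookkeeping of complements, is to represent $P$ and $Q$ as images of idempotent matrices $e \in M_n(A)$ and $f \in M_m(A)$, represent $\sigma$ by a matrix $M$ with $fMe = M$, and adjoin a matrix $T$ of new generators subject to $eTf = T$, $MT = f$, $TM = e$. Independence of the chosen idempotent presentation then follows a posteriori from the universal property itself, so there is no circularity.
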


We say that the ring $A_\Sigma$ is the \textbf{universal localisation} of $A$ at $\Sigma$. It is well-known that the homomorphism $f: A\rightarrow A_\Sigma$ is a ring epimorphism, unique up to equivalence, with $Tor_1^A(A_\Sigma,A_\Sigma)=0$ or, equivalently, $Ext_A^1(M,N)\cong Ext_{A_{\Sigma}}^1(M,N)$ for all $A_{\Sigma}$-modules $M$ and $N$ (see \cite{Sch}). However, later on, we will see many examples of universal localisations that do not yield homological ring epimorphisms. We will further the discussion of universal localisations in Section \ref{UNIFDA} in the specific context of finite dimensional algebras.

\subsection{Tilting and \texorpdfstring{$\tau$}{tau}-tilting modules}\label{tilting}
Let us first recall the notion of a (classical) tilting module.

\begin{definition}
We call a finitely generated $A$-module $T$ a \textbf{tilting module}, if
\begin{enumerate}
\item[T1)] $pdT\le 1$;
\item[T2)] $Ext_A^1(T,T)=0$;
\item[T3)] There is a short exact sequence $0\rightarrow A\rightarrow T_1\rightarrow T_2\rightarrow 0$ with $T_1,T_2$ in $addT$.
\end{enumerate}
\end{definition}
Note that for a tilting module $T$ we have that $|T|=|A|$, where $|-|$ counts the number of non-isomorphic indecomposable direct summands of a finitely generated $A$-module. From \cite{AS} we can deduce the following result that connects certain tilting modules to ring epimorphisms.

\begin{proposition}\label{prop tilt}
Let $f:A\rightarrow B$ be a finite dimensional and monomorphic ring epimorphism fulfilling that $Tor_1^A(B,B)=0$. Then the following are equivalent:
\begin{enumerate}
\item The projective dimension of $_AB$ is at most one;
\item $T:=_A\!\!B\oplus_A\!B/A$ is a tilting $A$-module.
\end{enumerate} 
\end{proposition}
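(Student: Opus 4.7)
The plan is to exploit the short exact sequence
$$0\to A\to B\to B/A\to 0$$
of left $A$-modules supplied by $f$ being monomorphic, together with the Ext-identification $Ext_A^1(M,N)\cong Ext_B^1(M,N)$ valid for $B$-modules $M,N$ whenever $f$ is a ring epimorphism with $Tor_1^A(B,B)=0$.

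The direction (2)$\Rightarrow$(1) is immediate: $B$ is a direct summand of the tilting module $T$, so axiom T1 forces the projective dimension of $_AB$ to be at most one.

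For (1)$\Rightarrow$(2) I would verify the three tilting axioms in turn. Axiom T3 is exactly the displayed short exact sequence, with $T_1=B$ and $T_2=B/A$ both in $addT$. For T1, applying $Ext_A^i(-,-)$ to the displayed sequence for $i\ge 2$ and using that $A$ is projective together with $pdB\le 1$ yields $pd(B/A)\le 1$, and hence $pdT\le 1$. For T2 I would split into the four cases $Ext_A^1(X,Y)=0$ with $X,Y\in\{B,B/A\}$. The case $X=Y=B$ is the Ext-identification applied to $Ext_B^1(B,B)=0$. The case $X=B,\,Y=B/A$ follows from applying $Hom_A(B,-)$ to the sequence and invoking $Ext_A^1(B,B)=0$ together with $Ext_A^2(B,A)=0$ (the latter from $pdB\le 1$). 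The two remaining cases $X=B/A$ are obtained by applying $Hom_A(-,Y)$ to the sequence and showing that the restriction maps $Hom_A(B,Y)\to Hom_A(A,Y)\cong Y$ are surjective for $Y\in\{B,B/A\}$, which in each case is witnessed by a right-multiplication map $b'\mapsto b'b$ on $B$ for an appropriately chosen (representative) $b\in B$.

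The main obstacle I anticipate lies in these last two Ext vanishings, where one has to cleanly exploit the $A$-bimodule structure of $B$ and $B/A$ to produce the required preimages; the remaining steps are essentially long-exact-sequence bookkeeping around the defining short exact sequence.
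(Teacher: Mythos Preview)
Your proof is correct. The paper does not actually prove this proposition; it simply states that the result can be deduced from \cite{AS} (Angeleri H\"ugel--S\'anchez), so there is no in-paper argument to compare against. Your direct verification of the tilting axioms via the short exact sequence $0\to A\to B\to B/A\to 0$ is the standard route and is what underlies the cited reference. The only point worth making explicit in your write-up is the surjectivity of $Hom_A(B,B/A)\to Hom_A(A,B/A)$: since $B/A$ need not carry a left $B$-module structure (the image $f(A)$ is a subring, not an ideal), your preimage for $\bar b\in B/A$ should be stated as the composite $\pi\circ\rho_b:B\to B\to B/A$ of right multiplication by $b$ with the quotient map, rather than as a right-multiplication map on $B/A$ itself. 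With that clarification, all four Ext vanishings go through exactly as you outlined.
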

We say that a tilting module of this form \textbf{arises from a ring epimorphism}. Note that, by \cite{MV} (also compare Theorem \ref{thm old} in this text), every (finitely generated) tilting module arising from a ring epimorphism actually arises from a universal localisation.

We call an $A$-module $T$ \textbf{support tilting}, if $T$ is a tilting module over the $\mathbb{K}$-algebra $A/AeA$ for some idempotent $e$ in $A$. Clearly, all tilting modules are support tilting. The set of isomorphism classes of basic tilting (respectively, support tilting) $A$-modules will be denoted by $tilt(A)$ (respectively, $s\mbox{-}tilt(A)$). Note that there is a natural way of associating a torsion class to a support tilting module $T$ by considering $GenT$. We say that two support tilting $A$-modules $T$ and $T'$ are equivalent, if $GenT=GenT'$. If $A$ is a hereditary $\mathbb{K}$-algebra, we get the following correspondences:

\begin{theorem}[\cite{IT}, \S 2]\label{Thm TI}
Let $A$ be a finite dimensional hereditary $\mathbb{K}$-algebra. There are bijections between
$$s\mbox{-}tilt(A)\rightarrow f\mbox{-}tors(A)\rightarrow f\mbox{-}wide(A)$$
given by mapping a (basic) support tilting module $T$ to $GenT$ and a finitely generated torsion class $\Tcal$ to 
$$\alpha(\Tcal):=\{X\in\Tcal\mid \forall(g:Y\rightarrow X)\in\Tcal, ker(g)\in\Tcal\}.$$
The inverse is given by assigning to a finitely generated wide subcategory $\Ccal$ the torsion class $Gen\Ccal$ and to a finitely generated torsion class $\Tcal$ the (basic) support tilting module $T$, given by the sum of the indecomposable Ext-projectives in $\Tcal$. Furthermore, the split-projective modules in the torsion class coincide with the projective modules in the wide subcategory.
\end{theorem}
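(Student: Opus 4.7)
The plan is to split the statement into two bijections, $s\mbox{-}tilt(A) \leftrightarrow f\mbox{-}tors(A)$ and $f\mbox{-}tors(A) \leftrightarrow f\mbox{-}wide(A)$, and then combine them. Throughout, the hereditary hypothesis is used crucially: it implies that $^{\perp}\Ccal$ coincides with $^{*}\!\Ccal$ (and dually on the right), and it guarantees that $A/AeA$ is again hereditary for every idempotent $e \in A$.

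For the first bijection, given a support tilting $T$ (tilting over $A/AeA$ for some $e$), the class $GenT$ is a torsion class in $(A/AeA)\mbox{-}mod$, and via the fully faithful restriction along $A \to A/AeA$ it becomes a torsion class in $A\mbox{-}mod$ with finite generator $T$. Conversely, for $\Tcal \in f\mbox{-}tors(A)$ I would invoke the classical Auslander--Smal\o{} construction: the direct sum of the indecomposable Ext-projective objects of $\Tcal$ is a rigid module satisfying the tilting axioms over $A/AeA$, where $e$ is determined by the simple projectives that annihilate $\Tcal$, and this module recovers $\Tcal$ as its $Gen$.

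For the second bijection I would first check that $\alpha(\Tcal)$ is an f-wide subcategory. Closure under kernels is immediate from the defining condition. For closure under cokernels, given a morphism $f: X \to Y$ in $\alpha(\Tcal)$ and a map $g: Z \to \mathrm{coker}(f)$ with $Z \in \Tcal$, forming the pullback and using that $\Ext^{2}_{A}$ vanishes places $\ker(g)$ inside $\Tcal$; extension-closedness in $\alpha(\Tcal)$ follows by a similar pullback-pushout argument. A finite generator is then provided by the indecomposable split-projective objects of $\Tcal$. Conversely, for an f-wide $\Ccal$ with finite generator $T'$, the class $Gen\Ccal = GenT'$ is closed under quotients automatically, and closure under extensions follows by lifting any extension against a chosen surjection from a power of $T'$, again using the hereditary property. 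The two maps are mutually inverse: $\alpha(Gen\Ccal) = \Ccal$, since $\Ccal$ is exactly the collection of objects in $Gen\Ccal$ whose first syzygy stays in $Gen\Ccal$, and $Gen(\alpha(\Tcal)) = \Tcal$, since $\alpha(\Tcal)$ contains a generator of $\Tcal$.

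The hard part is the abelianness and extension-closedness of $\alpha(\Tcal)$, both of which rely essentially on the hereditary hypothesis: projective resolutions of length at most one reduce $\Ext$-control to a single step and force all syzygies to be projective. A secondary subtlety is the identification of split-projective objects of $\Tcal$ with the projective objects of the abelian category $\alpha(\Tcal)$: one must verify that every epimorphism in $\alpha(\Tcal)$ is already a surjection of $A$-modules, so that split-projectivity in $\Tcal$ can be tested on the smaller subcategory $\alpha(\Tcal)$.
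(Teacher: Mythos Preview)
The paper does not prove this theorem: it is stated as a citation from Ingalls--Thomas \cite{IT}, \S 2, and used as a black box throughout Section~\ref{TULHA}. So there is no proof in the paper to compare your proposal against.

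That said, your outline is broadly the correct strategy and follows the structure of the original Ingalls--Thomas argument. A few places deserve more care if you intend this as an actual proof. First, in the passage from $f\mbox{-}tors(A)$ back to $s\mbox{-}tilt(A)$, the phrase ``the simple projectives that annihilate $\Tcal$'' is imprecise; what you need is the idempotent $e$ such that $Hom_A(Ae,\Tcal)=0$, and then an argument that the sum of Ext-projectives in $\Tcal$ is genuinely tilting over $A/AeA$ (not merely partial tilting). Second, your justification of $\alpha(Gen\Ccal)=\Ccal$ as ``objects whose first syzygy stays in $Gen\Ccal$'' is not quite the defining condition for $\alpha$, which quantifies over all maps $g:Y\to X$ with $Y\in\Tcal$, not just a projective cover; the reduction to a single test map requires the hereditary hypothesis and should be spelled out. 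Third, the final claim that epimorphisms in the abelian category $\alpha(\Tcal)$ coincide with $A$-module surjections is exactly what is needed to match split-projectives in $\Tcal$ with projectives in $\alpha(\Tcal)$, and this is the content of \cite{IT}, Proposition~2.15 (compare the use in the proof of Theorem~\ref{hered tilt}(2) in the present paper); it is not automatic and should not be left as an afterthought.
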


For an arbitrary finite dimensional $\mathbb{K}$-algebra $A$ these bijections, in general, will fail. In order to get a similar classification of the finitely generated torsion classes, we use the notion of a $\tau$-tilting module, following \cite{AIR}. We call a finitely generated $A$-module $M$ \textbf{$\tau$-rigid}, if $Hom_A(M,\tau M)=0$, where $\tau$ denotes the usual \textit{Auslander-Reiten translation} in $A\mbox{-}mod$.

\begin{definition}
We call a finitely generated $A$-module $T$ a \textbf{$\tau$-tilting module}, if
\begin{enumerate}
\item[$\tau$1)] $T$ is $\tau$-rigid;
\item[$\tau$2)] $|T|=|A|$.
\end{enumerate}
\end{definition}

It is not hard to check, using the \textit{Auslander-Reiten duality}, that tilting modules are always $\tau$-tilting and, conversely, that faithful $\tau$-tilting modules are already tilting (\cite{AIR}, Proposition 2.2). Indeed, if $A$ is a hereditary algebra, then $\tau$-tilting $A$-modules are tilting $A$-modules.
Similar to the classical setup, we call an $A$-module $T$ \textbf{support $\tau$-tilting}, if $T$ is a $\tau$-tilting module over the $\mathbb{K}$-algebra $A/AeA$ for some idempotent $e$ in $A$. The set of isomorphism classes of basic $\tau$-tilting (respectively, support $\tau$-tilting) $A$-modules will be denoted by $\tau\mbox{-}tilt(A)$ (respectively, $s\tau\mbox{-}tilt(A)$). Note that every support $\tau$-tilting module $T$ gives rise to a torsion class $GenT$. We say that two support $\tau$-tilting $A$-modules $T$ and $T'$ are equivalent, if $GenT=GenT'$. We get the following correspondence between support $\tau$-tilting modules and finitely generated torsion classes.

\begin{theorem}[\cite{AIR}, Theorem 2.7]\label{AIR1}
Let $A$ be a finite dimensional $\mathbb{K}$-algebra. There is a bijection between
$$ s\tau\mbox{-}tilt(A)\longrightarrow f\mbox{-}tors(A)$$
given by mapping a (basic) support $\tau$-tilting module $T$ to the torsion class $GenT$. Conversely, we assign to a finitely generated torsion class $\Tcal$ the sum of the indecomposable Ext-projectives in $\Tcal$.
\end{theorem}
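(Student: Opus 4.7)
The strategy is first to extract a single functorial principle from Auslander-Reiten duality, then to verify well-definedness of both maps, and finally to check that they are mutually inverse. The central ingredient is the following consequence of the Auslander-Reiten formula $\Hom_A(N,\tau M)\cong D\overline{\Ext}^1_A(M,N)$: a finitely generated $A$-module $M$ is $\tau$-rigid if and only if $\Ext^1_A(M,N)=0$ for every $N\in GenM$. The forward direction uses that a surjection $M^k\twoheadrightarrow N$ yields an injection $\Hom_A(N,\tau M)\hookrightarrow\Hom_A(M^k,\tau M)$, so vanishing of $\Hom_A(M,\tau M)$ propagates to all of $GenM$; the converse is the special case $N=M$. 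In particular, $GenM$ is extension-closed, hence a torsion class, whenever $M$ is $\tau$-rigid.

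From this, well-definedness of the forward map is immediate: if $T$ is support $\tau$-tilting, viewed as $\tau$-tilting over $A/AeA$, then $GenT$ is closed under quotients by construction, admits $T$ as a finite generator, and is extension-closed by the principle above (transported from $A/AeA\mbox{-}mod$ to $A\mbox{-}mod$ via the Serre inclusion). For the inverse, given $\Tcal\in f\mbox{-}tors(A)$ with finite generator $M$, I would let $T$ be the sum of the indecomposable Ext-projectives of $\Tcal$; finiteness of this sum follows because every such summand must occur in a minimal right $add$-approximation of $M$ inside $\Tcal$. By the lemma, $T$ is $\tau$-rigid. To see that $GenT=\Tcal$, take $X\in\Tcal$ with minimal right $add(T)$-approximation $f:T'\to X$ and argue that $\mathrm{coker}(f)$, which lies in $\Tcal$, must vanish; otherwise an Ext-projective direct summand of $\mathrm{coker}(f)$ would escape $add\,T$, contradicting maximality.

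The main obstacle is establishing the count $|T|=|A/AeA|$, where $e$ indexes those indecomposable projective $A$-modules lying outside $\Tcal$. This is the crux of the Adachi--Iyama--Reiten theorem and rests on a Bongartz-type completion: one shows that any $\tau$-rigid $A/AeA$-module $M$ can be completed to a $\tau$-tilting module $M\oplus B$, with $B$ extracted from a suitable minimal approximation sequence, and conversely that any finitely generated torsion class in $A/AeA\mbox{-}mod$ carries exactly $|A/AeA|$ indecomposable Ext-projectives. Once this count is in place, mutual inverseness is routine: for $T$ support $\tau$-tilting, $T$ is Ext-projective in $GenT$ by the lemma of paragraph one and exhausts the Ext-projectives there by the count; conversely, for $\Tcal\in f\mbox{-}tors(A)$, the torsion class is recovered as $GenT$ by the surjectivity argument of paragraph two.
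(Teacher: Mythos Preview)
The paper does not give its own proof of this statement: Theorem~\ref{AIR1} is quoted verbatim from \cite{AIR} as an external input, and the paper uses it as a black box (for instance in Corollary~\ref{Nak bij} and Theorem~\ref{Naktilt}). So there is nothing in the paper to compare your attempt against.

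That said, your sketch follows the standard strategy of \cite{AIR} and is essentially sound in outline. A couple of small points: the formula you quote should read $D\Ext^1_A(M,N)\cong\underline{\Hom}_A(N,\tau M)$ (stable $\Hom$, not $\overline{\Ext}$), and the passage from vanishing of the stable $\Hom$ to vanishing of the ordinary $\Hom_A(M,\tau M)$ is not entirely automatic; this is the Auslander--Smal{\o} lemma and requires a short additional argument. Your finiteness claim for the Ext-projectives (``every such summand must occur in a minimal right $add$-approximation of $M$'') is also a little compressed---in \cite{AIR} this goes through showing that the category of Ext-projectives in a functorially finite torsion class has an additive generator. But you correctly identify the Bongartz-type completion and the resulting count $|T|=|A/AeA|$ as the heart of the matter, and the mutual-inverse verification is routine once that is in place.
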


Finally, the introduction of $\tau$-tilting modules in \cite{AIR} was also motivated by the idea of carrying out tilting theory from the perspective of \textbf{mutation}. Indeed, in \cite{AIR} (see Theorem 2.18) it was shown that every basic almost complete support $\tau$-tilting module is a direct summand of precisely two basic support $\tau$-tilting modules. This completion defines mutation between two support $\tau$-tilting modules and gives rise to a partial order on $s\tau\mbox{-}tilt(A)$. The partial order can be understood by comparing the associated torsion classes (see \cite{AIR}, Section 2.4). More precisely, for two support $\tau$-tilting modules $T_1$ and $T_2$ we have that $T_1\le T_2$, if $GenT_1\subseteq Gen T_2$.

\section{Universal localisations for finite dimensional algebras}\label{UNIFDA}
In what follows, we will discuss some properties of universal localisations for finite dimensional $\mathbb{K}$-algebras.
It turns out that we can define universal localisations with respect to a set of finitely generated $A$-modules. Take $\Ucal\subseteq A\mbox{-}mod$ and denote by $A_{\Ucal}$ the universal localisation of $A$ at the set $\{\sigma_0^X\mid X\in\Ucal\}$. Note that $A_{\Ucal}$ is well-defined, since the minimal projective resolutions are essentially unique. Conversely, if we start with a universal localisation $A_{\Sigma}$ of $A$, we define $\Ucal$ to be the set of cokernels of maps in $\Sigma$ plus, additionally, the set of projective $A$-modules which are sent to zero by some map in $\Sigma$. It follows that $A_{\Sigma}$ and $A_{\Ucal}$ lie in the same epiclass of $A$, since an arbitrary map between finitely generated projective $A$-modules $f:P\rightarrow Q$ only differs from the minimal projective presentation of its cokernel by a trivial extension. Indeed, there are finitely generated projective $A$-modules $P'$ and $Q'$ fitting into the following commutative diagram
$$\xymatrix{P\ar[r]^f\ar[d]^\cong & Q\ar[r]\ar[d]^\cong & cok(f)=:M\ar[r]\ar[d]^{id_M} & 0\\ P_1^M\oplus Q'\oplus P'\ar[r]^{f'} & P_0^M\oplus Q'\ar[r] & M\ar[r] & 0}$$
where the map $f'$ is given by the matrix 
$$\left( \begin{array}{ccc}
\sigma_0^M & 0 & 0 \\
0 & id_{Q'} & 0 \end{array} \right).$$
Therefore, universal localisations of $A$ can be defined with respect to a set of finitely generated $A$-modules. Throughout, we will not distinguish explicitly between localising with respect to a set of maps or a set of modules. However, the meaning of the given set $\Sigma$ will become clear in the specific context.
We call a universal localisation $A_{\Sigma}$ of $A$ 
\begin{itemize}
\item \textbf{pure}, if $A_{\Sigma}\otimes_A Ae\not= 0$ for all idempotents $e\not= 0$ in $A$;
\item \textbf{$e$-annihilating}, if $A_{\Sigma}\otimes_A Ae=0$ for an idempotent $e$ in $A$.
\end{itemize}

The set of all (respectively, all pure, $e$-annihilating or finite dimensional) universal localisations of $A$ (up to epiclasses) will be denoted by $uniloc(A)$ (respectively, $uniloc^p(A)$, $uniloc_e(A)$ or $fd\mbox{-}uniloc(A)$). Note that all these sets are partially ordered by inclusion with respect to the essential image of the restriction functor $\Xcal_{A_{\Sigma}}$.
Some of the finite dimensional universal localisations of $A$ are easy to compute. For example, it is not hard to check that the universal localisation at the projective $A$-module $Ae$ for some idempotent $e$ in $A$ is given by the quotient ring $A/AeA$. In fact, all surjective universal localisations of $A$ are of this form.

\begin{lemma}\label{surjective}
Let $A$ be a finite dimensional $\mathbb{K}$-algebra and let $f:A\rightarrow B$ be a surjective ring epimorphism with $Tor_1^A(B,B)=0$. Then there is an idempotent $e$ in $A$ such that $B$ lies in the same epiclass of $A$ as $A/AeA$.
\end{lemma}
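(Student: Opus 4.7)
The plan is to write $B = A/I$ with $I = \ker f$ and show that $I = AeA$ for some idempotent $e \in A$. The argument splits into two steps: first, deduce that $I = I^2$ from the hypothesis $Tor_1^A(B,B) = 0$; second, exploit this identity together with the Artin--Wedderburn structure of $A/J$, where $J = \operatorname{rad}(A)$, to produce an idempotent generator of $I$.

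For the first step, I tensor the short exact sequence $0 \to I \to A \to A/I \to 0$ over $A$ with $A/I$ to obtain
\begin{equation}\nonumber
0 = Tor_1^A(A/I,A/I) \to I \otimes_A A/I \to A \otimes_A A/I \to A/I \otimes_A A/I \to 0.
\end{equation}
Since $f$ is surjective, $A/I \otimes_A A/I \cong A/I$, so under the canonical identification $A \otimes_A A/I \cong A/I$ the rightmost map is the identity. Combined with $I \otimes_A A/I \cong I/I^2$, this forces $I = I^2$.

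For the second step, the image $(I+J)/J$ of $I$ in the semisimple quotient $A/J$ is a two-sided ideal generated by a central idempotent $\bar c$, by Artin--Wedderburn. Pick a preimage $i_0 \in I$ of $\bar c$; then $i_0^2 - i_0 \in J$ is nilpotent. Apply the idempotent-lifting iteration $i_{n+1} := 3 i_n^2 - 2 i_n^3$. Each $i_n$ is a polynomial in $i_0$ with zero constant term, hence lies in $I$; and a direct computation in the commutative subring $\mathbb{K}[i_0]$ gives $i_{n+1}^2 - i_{n+1} \in (i_n^2 - i_n)^2 \cdot \mathbb{K}[i_0]$, so $i_n^2 - i_n \in J^{2^n}$. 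The iteration therefore terminates at an idempotent $e \in I$ with $e + J = \bar c$.

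It remains to verify $I = AeA$. Certainly $AeA \subseteq I$ since $e \in I$, and the equality $\bar e = \bar c$ in $A/J$ yields $AeA + J = I + J$. Passing to $A' := A/AeA$, the image $I' := I/AeA$ is contained in $(J + AeA)/AeA$, which is nilpotent, while $I = I^2$ forces $(I')^2 = I'$. Any ideal which is simultaneously idempotent and nilpotent must vanish, so $I' = 0$ and $I = AeA$. The main obstacle is arranging the idempotent lift to land inside $I$ in the second step; this is handled cleanly because the Newton-type iteration automatically preserves the two-sided ideal $I$.
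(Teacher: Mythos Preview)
Your proof is correct and follows the same strategy as the paper: identify $B$ with $A/I$ for $I=\ker f$, use $Tor_1^A(A/I,A/I)\cong I/I^2$ to deduce that $I$ is an idempotent ideal, and conclude $I=AeA$. The only difference is that the paper compresses the last step into a single sentence (``Since $A$ is a finite dimensional $\mathbb{K}$-algebra, it suffices to show that $\ker(f)$ is an idempotent ideal''), invoking as known the fact that idempotent two-sided ideals in finite-dimensional algebras are generated by idempotents, whereas you supply a complete self-contained argument for this via idempotent lifting modulo the radical.
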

\begin{proof}
Certainly, $ker(f)$ is a two-sided ideal in $A$ and $A/ker(f)$ and $B$ lie in the same epiclass of $A$. Since $A$ is a finite dimensional $\mathbb{K}$-algebra, it suffices to show that $ker(f)$ is an idempotent ideal, which follows from $$0=Tor_1^A(B,B)\cong Tor_1^A(A/ker(f),A/ker(f))\cong ker(f)/ker(f)^2.$$\vspace*{-0.3cm}
\end{proof}

Next, we want to use certain pairs of orthogonal subcategories in $A\mbox{-}mod$ (defined in Section \ref{Pre Not}) to study finite dimensional universal localisations of $A$. Note that some of the following observations could also be stated for arbitrary universal localisations of $A$ by considering suitable subcategories of $A\mbox{-}Mod$. Since, later on, we are mainly interested in finitely generated $A$-modules, we leave this possible generalisation to the reader. Thus, let $A_{\Sigma}$ be a finite dimensional universal localisation of $A$. By \cite{CX1} (see Proposition 3.3), we know that $\Xcal_{A_{\Sigma}}$ is given by $\{X\in A\mbox{-}mod\mid Hom_A(\sigma,X)\text{ an isomorphism for all } \sigma\in\Sigma\}$. It can also be described by $\Sigma^*$, if we understand $\Sigma$ as a suitable set of finitely generated $A$-modules. Since $\Xcal_{A_{\Sigma}}$ is closed under extensions in $A\mbox{-}mod$, by Theorem \ref{bireflective}, we get an injective map
$$\omega:fd\mbox{-}uniloc(A)\longrightarrow f\mbox{-}wide(A)$$
by mapping $A_{\Sigma}$ to $\Xcal_{A_{\Sigma}}=\Sigma^*$. Now we can ask the following questions:

\begin{question}\mbox{}\\ \vspace*{-0.4cm}\label{question}

1. How can we describe the image of $\omega$ in $f\mbox{-}wide(A)$?

2. For which choices of $A$ is the map $\omega$ bijective?
\end{question}

In other words, we ask for those finite dimensional ring epimorphisms $f:A\rightarrow B$ with $Tor_1^A(B,B)=0$ that can be realised as universal localisations of $A$. Note that a very first answer is given by Lemma \ref{surjective}. The following proposition determines a candidate for the (partial) inverse of $\omega$.

\begin{proposition}\label{prop uniloc}
Let $f:A\rightarrow B$ be a finite dimensional ring epimorphism. The following holds.
\begin{enumerate}
\item $^*\!\!\Xcal_B=\{X\in A\mbox{-}mod\mid B\otimes_A\sigma_0^X\text{ an isomorphism}\}$, i.e., $^*\!\!\Xcal_B$ describes those finitely generated $A$-modules whose minimal projective presentation becomes invertible under the action of $B\otimes_A-$.
\item $^*\!\!\Xcal_B$ is closed under finite direct sums, direct summands, extensions and cokernels of injective maps whose cokernel is of projective dimension less or equal to one.
\item $(^*\!\!\Xcal_B)^*$ is a wide subcategory of $A\mbox{-}mod$ with $\Xcal_B\subseteq (^*\!\!\Xcal_B)^*$. Moreover, if $f$ is a universal localisation, then we get $\Xcal_B=(^*\!\!\Xcal_B)^*$.
\end{enumerate}
\end{proposition}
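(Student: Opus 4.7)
The plan is to derive all three parts from the Hom-Tensor adjunction associated with $f:A\to B$, together with standard homological exact sequences.

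For Part~(1), the key observation is that since $f$ is a ring epimorphism, restriction $f_*$ is fully faithful (Proposition~\ref{ring epi}), so every $C\in\Xcal_B$ comes from a $B$-module $C'$, and adjunction gives $Hom_A(\sigma,C)\cong Hom_B(B\otimes_A\sigma,C')$ for any map $\sigma$ between finitely generated projective $A$-modules. A Yoneda argument then shows that $B\otimes_A\sigma$ is an isomorphism if and only if $Hom_A(\sigma,C)$ is one for every $C\in\Xcal_B$; specialising to $\sigma=\sigma_0^X$ yields the description of ${}^*\!\Xcal_B$ in~(1).

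For Part~(2), closure under finite direct sums and direct summands is immediate from $\sigma_0^{X\oplus Y}=\sigma_0^X\oplus\sigma_0^Y$ and the additivity of $B\otimes_A-$. For an extension $0\to X\to Y\to Z\to 0$ with $X,Z\in{}^*\!\Xcal_B$, I would use the Horseshoe Lemma to produce a (non-minimal) projective presentation of $Y$ whose connecting map $\tilde\sigma$ is upper-triangular with $\sigma_0^X,\sigma_0^Z$ on the diagonal; applying $B\otimes_A-$ yields an upper-triangular matrix with isomorphisms on the diagonal, hence an isomorphism. The calculation displayed immediately before the proposition shows that any projective presentation of $Y$ differs from the minimal one only by trivial summands $id_Q$, so $B\otimes_A\tilde\sigma$ being an isomorphism is equivalent to $B\otimes_A\sigma_0^Y$ being one, placing $Y$ in ${}^*\!\Xcal_B$ by Part~(1). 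For the cokernel $Z$ of an injection $X\hookrightarrow Y$ with $X,Y\in{}^*\!\Xcal_B$ and $pd\,Z\le 1$, the long exact $Tor$-sequence is decisive: Part~(1) gives $B\otimes_A X=B\otimes_A Y=0$ and $Tor_1^A(B,X)=Tor_1^A(B,Y)=0$, whence $Tor_1^A(B,Z)=B\otimes_A Z=0$, and the hypothesis $pd\,Z\le 1$ makes these two vanishings precisely equivalent to $B\otimes_A\sigma_0^Z$ being an isomorphism.

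For Part~(3), the inclusion $\Xcal_B\subseteq({}^*\!\Xcal_B)^*$ is tautological from the definition of the orthogonals. To show $({}^*\!\Xcal_B)^*$ is wide, I would note that it is the intersection of the bireflective subcategories $\{Y\in A\mbox{-}mod\mid Hom_A(\sigma_0^C,Y)\text{ is an isomorphism}\}$ over $C\in{}^*\!\Xcal_B$, which are bireflective by Theorems~\ref{bireflective} and~\ref{uni loc} and hence closed under kernels and cokernels, while a short $5$-lemma applied to the exact rows $0\to Hom(P_i^C,Y_1)\to Hom(P_i^C,Y_2)\to Hom(P_i^C,Y_3)\to 0$ (exact for $i=0,1$ because $P_i^C$ is projective) yields extension-closure. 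Finally, if $f$ is itself a universal localisation at a set $\Sigma$ of maps between projectives, each cokernel of a map in $\Sigma$ lies in ${}^*\!\Xcal_B$ (by the trivial-summands argument above), so testing membership in $({}^*\!\Xcal_B)^*$ against only these cokernels---which already cuts out $\Xcal_B$---yields the reverse inclusion $({}^*\!\Xcal_B)^*\subseteq\Xcal_B$. I expect the main subtlety to lie in Part~(2), where the arguments must move fluidly between the $Hom$- and $Tor$-theoretic descriptions of ${}^*\!\Xcal_B$ and exploit the invariance of $B\otimes_A\sigma$ being an isomorphism under passage from the minimal to an arbitrary projective presentation.
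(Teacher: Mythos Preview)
Your proposal is correct and follows essentially the same approach as the paper. The only minor deviations are that for the cokernel statement in Part~(2) you run the long exact $Tor$-sequence (the paper instead applies $Hom_A(-,V)$ and uses that $pd\,Z\le 1$ forces $\sigma_1^Z=0$), and for wideness in Part~(3) you verify kernel/cokernel/extension closure directly via the $5$-lemma, whereas the paper observes in one line that $({}^*\!\Xcal_B)^*$ is exactly the category of finitely generated modules over the universal localisation $A_{{}^*\!\Xcal_B}$ and is therefore wide.
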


\begin{proof}
ad(1): Since the tensor-functor $B\otimes_A-$ is left adjoint to the restriction functor $f_*$ and $f_*$ induces a full embedding of the associated module categories, we have that $Hom_A(\sigma_0^X,Y)$ is an isomorphism for all $Y$ in $\Xcal_B$ if and only if $Hom_A(B\otimes_A \sigma_0^X,Y)$ is an isomorphism for all $Y$ in $\Xcal_B$. Consequently, if $B\otimes_A \sigma_0^X$ is an isomorphism, then $Hom_A(B\otimes_A \sigma_0^X,Y)$ and, therefore, $Hom_A(\sigma_0^X,Y)$ is an isomorphism for all $Y$ in $\Xcal_B$. 

Conversely, let us assume that $Hom_A(\sigma_0^X,Y)$ and, thus, $Hom_A(B\otimes_A \sigma_0^X,Y)$ is an isomorphism for all $Y$ in $\Xcal_B$. It follows that $Hom_A(B\otimes_A X,Y)=0$ for all $Y$ in $\Xcal_B$ and, therefore, we get $B\otimes_A X=0$. Consequently, the $A$-module homomorphism
$$B\otimes_A \sigma_0^X:B\otimes_A P_1^X\rightarrow B\otimes_A P_0^X$$
is surjective. Indeed, it is split surjective, since $B\otimes_A P_1^X$ and $B\otimes_A P_0^X$ are projective $B$-modules. By assumption, we know that $Hom_A(B\otimes_A \sigma_0^X,B\otimes_A P_1^X)$ is an isomorphism such that the identity map on $B\otimes_A P_1^X$ must factor through $B\otimes_A \sigma_0^X$, turning $B\otimes_A \sigma_0^X$ into an isomorphism of $A$-modules. This finishes (1).

ad(2): Since the minimal projective resolution of a direct sum of finitely generated $A$-modules is given by the direct sum of the minimal projective resolutions of their direct summands, $^*\!\!\Xcal_B$ is closed under (finite) direct sums and summands. On the other hand, by the Horseshoe Lemma, we know that for a short exact sequence of finitely generated $A$-modules
$$\xymatrix{0\ar[r] & X\ar[r] & Y\ar[r] & Z\ar[r] & 0}$$
with $X$ and $Z$ in $^*\!\!\Xcal_B$, by taking the direct sum of the minimal projective presentations of $X$ and $Z$, we get a (not necessarily minimal) projective presentation of $Y$ that becomes invertible under the action of $B\otimes_A-$. Consequently, $Y$ belongs to $^*\!\!\Xcal_B$. Finally, if we assume that in the above sequence $X$ and $Y$ belong to $^*\!\!\Xcal_B$, by applying the contravariant functor $Hom_A(-,V)$ for $V$ in $\Xcal_B$, we get that $Hom_A(Z,\Xcal_B)=Ext_A^1(Z,\Xcal_B)=0$. If we further assume that $Z$ is of projective dimension less or equal to one, we can conclude that $Z$ lies in $^*\!\!\Xcal_B$.

ad(3): We consider the universal localisation of $A$ at $^*\!\!\Xcal_B$. Then $(^*\!\!\Xcal_B)^*$ describes the finitely generated $A$-modules over this localisation. Therefore, $(^*\!\!\Xcal_B)^*$ is wide. The inclusion follows from a straightforward verification. Moreover, if $f$ is a finite dimensional universal localisation, we get that $(^*\!\!\Xcal_B)^*=\Xcal_B$ by (1).
\end{proof}

Let us add some remarks to this proposition. For a finite dimensional universal localisation $A_{\Sigma}$ of $A$ we call the modules in $^*\!\!\Xcal_{A_{\Sigma}}$, according to \cite{Sch3}, \textbf{$A_{\Sigma}$-trivial}. Clearly, when seen as a set of modules, $\Sigma$ is contained in $^*\!\!\Xcal_{A_{\Sigma}}$ and the localisation $A_{\Sigma}$ lies in the same epiclass of $A$ as $A_{^*\!\!\Xcal_{A_{\Sigma}}}$. Consequently, a finite dimensional universal localisation of $A$ is uniquely determined by its $A_{\Sigma}$-trivial modules. The partial order on $fd\mbox{-}uniloc(A)$, given by inclusion of the associated module categories, can be reformulated using these modules. More precisely, for $A_{\Sigma_1}$ and $A_{\Sigma_2}$ in $fd\mbox{-}uniloc(A)$ we have $A_{\Sigma_1}\le A_{\Sigma_2}$ if and only if $^*\!\!\Xcal_{A_{\Sigma_1}}\!\!\supseteq ^*\!\!\!\!\Xcal_{A_{\Sigma_2}}$. Besides, since $^*\!\!\Xcal_{A_{\Sigma}}$ is closed under direct sums and summands, it is enough to focus on the indecomposable $A_\Sigma$-trivial modules. The further closure properties of $^*\!\!\Xcal_{A_{\Sigma}}$ can be used to find a minimal subset among these indecomposable modules that still determines the localisation. But, in general, such a set will not be unique.

Concerning Proposition \ref{prop uniloc}(3), one may consider $A_{^*\!\!\Xcal_B}$ as the best approximation of $B$ by a universal localisation of $A$, even though, a priori, it is not clear that $A_{^*\!\!\Xcal_B}$ is again finite dimensional.
In case it is finite dimensional (for example, if $A$ is a representation finite algebra), then $B$ is the universal localisation of $A$ at $^*\!\!\Xcal_B$ if and only if $\Xcal_B=(^*\!\!\Xcal_B)^*$. In many situations, this provides an explicit condition to decide whether a certain ring epimorphism is a universal localisation. Next, we will collect some answers to Question \ref{question}. The following statement can be deduced from \cite{KS} (see Theorem 6.1) using the language of Proposition \ref{prop uniloc}.

\begin{proposition}\label{cor hered}
Let $A$ be a finite dimensional and hereditary $\mathbb{K}$-algebra. Then we have a bijection
$$\omega:fd\mbox{-}uniloc(A)\longrightarrow f\mbox{-}wide(A)$$
by mapping $A_{\Sigma}$ to $\Xcal_{A_{\Sigma}}=\Sigma^*=\Sigma^{\perp}$. The inverse is given by mapping $\Ccal$ in $f\mbox{-}wide(A)$ to $A_{^*\!\!\Ccal}=A_{^{\perp}\!\!\Ccal}$.
\end{proposition}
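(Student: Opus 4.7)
The plan is to combine Proposition \ref{prop uniloc} with the perpendicular calculus for finitely generated wide subcategories in a hereditary module category, which is the content of \cite{KS}, Theorem 6.1. Throughout, I will use the identities ${}^*\Ccal = {}^{\perp}\Ccal$ and $\Ccal^* = \Ccal^{\perp}$ recorded at the end of Section \ref{Pre Not}, so that the entire discussion can be phrased in terms of Ext-perpendicular categories.

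Injectivity of $\omega$ has already been established in the paragraph before Question \ref{question}, so the real task is to exhibit a two-sided inverse $\Ccal \mapsto A_{{}^{\perp}\Ccal}$. Given $\Ccal$ in $f\mbox{-}wide(A)$ with finite generator $T$, I would first argue that ${}^{\perp}\Ccal = {}^{\perp}T$: the inclusion ``$\subseteq$'' is immediate since $T \in \Ccal$, while ``$\supseteq$'' uses that $\Ccal$ is the smallest wide subcategory of $A\mbox{-}mod$ containing $T$, together with the observation that in the hereditary setting Hom- and $\Ext^1$-vanishing (with first argument fixed in ${}^{\perp}T$) are preserved under kernels, cokernels and extensions. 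Consequently, $A_{{}^{\perp}\Ccal} = A_{{}^{\perp}T}$ is a universal localisation at a finite set of modules, and \cite{KS}, Theorem 6.1 ensures that it lies in $fd\mbox{-}uniloc(A)$.

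Next I would identify $\Xcal_{A_{{}^{\perp}\Ccal}}$ with $\Ccal$. Applying the formula $\Xcal_{A_\Sigma} = \Sigma^*$ stated before Question \ref{question} to $\Sigma = {}^{\perp}\Ccal$ yields
\[
\Xcal_{A_{{}^{\perp}\Ccal}} \;=\; ({}^{\perp}\Ccal)^* \;=\; ({}^{\perp}\Ccal)^{\perp}
\]
in the hereditary case. The inclusion $\Ccal \subseteq ({}^{\perp}\Ccal)^{\perp}$ is tautological. The reverse inclusion $({}^{\perp}\Ccal)^{\perp} \subseteq \Ccal$ is the substantive point: every finitely generated wide subcategory of $A\mbox{-}mod$ over a hereditary $\Kbb$-algebra equals its double Ext-perpendicular. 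This is the Geigle--Lenzing/Schofield perpendicular calculus, and is precisely the nontrivial ingredient supplied by \cite{KS}, Theorem 6.1.

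The main obstacle is therefore this double-perpendicular identity $({}^{\perp}\Ccal)^{\perp} = \Ccal$ for $\Ccal \in f\mbox{-}wide(A)$; once this is accepted from \cite{KS}, the rest is bookkeeping. The two assignments $\omega$ and $\Ccal \mapsto A_{{}^{\perp}\Ccal}$ are mutually inverse: one direction is $\Xcal_{A_{{}^{\perp}\Ccal}} = \Ccal$ just shown, and the other direction reads $A_{{}^{\perp}\Xcal_{A_\Sigma}} = A_{{}^*\Xcal_{A_\Sigma}}$, which by Proposition \ref{prop uniloc}(3) and the remarks following it lies in the same epiclass as $A_\Sigma$. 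Combined with the already-known injectivity of $\omega$, this gives the desired bijection and the explicit formula for its inverse.
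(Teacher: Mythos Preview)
Your argument is correct and follows essentially the same route as the paper, which simply records that the statement can be deduced from \cite{KS}, Theorem~6.1 in the language of Proposition~\ref{prop uniloc}. One small slip: ${}^{\perp}T$ is not in general a \emph{finite} set of modules, but this is harmless since you correctly attribute the finite-dimensionality of $A_{{}^{\perp}\Ccal}$ to \cite{KS} rather than to any finiteness of the localising set.
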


In particular, $\omega$ is a bijection for every semisimple finite dimensional $\mathbb{K}$-algebra $A$. In this case, all universal localisations (up to epiclasses) are of the form $A/AeA$ for $e$ an idempotent in $A$ (see Lemma \ref{surjective}).

\begin{lemma}\label{local}
Let $A$ be a finite dimensional and local $\mathbb{K}$-algebra. Then the only finite dimensional ring epimorphisms $A\rightarrow B$ (up to epiclasses) with $Tor_1^A(B,B)=0$ are the identity map on $A$ and the zero map. In particular, for this choice of $A$ the map $\omega$ induces a (trivial) bijection.
\end{lemma}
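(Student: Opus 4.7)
My plan is to show that any such $f$ must be surjective; Lemma \ref{surjective}, combined with the fact that a local algebra has only $0$ and $1$ as idempotents, then finishes the argument. The case $B = 0$ gives the zero map directly, so assume $B \neq 0$. Set $R := f(A) \subseteq B$ and consider the short exact sequence of right $A$-modules
\[
0 \longrightarrow R \longrightarrow B \longrightarrow C \longrightarrow 0, \qquad C := B/R.
\]

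Next, apply $- \otimes_A B$ and examine the long exact sequence of $Tor^A(-,B)$. Because $R = A/\ker(f)$ and $f(\ker(f)) = 0$, the canonical map $R \otimes_A B \to B$, $r \otimes b \mapsto rb$, is an isomorphism; together with the ring-epimorphism isomorphism $B \otimes_A B \cong B$, a direct verification shows that the map $R \otimes_A B \to B \otimes_A B$ induced by $R \hookrightarrow B$ becomes the identity on $B$ under these identifications. Using the hypothesis $Tor_1^A(B,B) = 0$, the long exact sequence collapses to give $C \otimes_A B = 0$.

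A double Nakayama argument now kills $C$. Writing $\mathfrak{m} := J(A)$, if $C \neq 0$ then, being a finitely generated right $A$-module, $C$ admits by Nakayama's lemma a surjection $C \twoheadrightarrow \mathbb{K}$ onto the unique simple right $A$-module. Right-exactness of $- \otimes_A B$ yields $\mathbb{K} \otimes_A B = B/f(\mathfrak{m})B = 0$, and Nakayama applied to the finitely generated left $A$-module $B$ then forces $B = 0$, contradicting our standing assumption. Hence $C = 0$ and $f$ is surjective, so Lemma \ref{surjective} places $B$ in the epiclass of $A/AeA$ for some idempotent $e \in \{0,1\}$; combined with $B \neq 0$, this identifies $f$ (up to epiclass) with the identity on $A$. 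The final bijection statement is then immediate: $\omega$ matches the two-element set $fd\mbox{-}uniloc(A)$ bijectively with $\{\{0\}, A\mbox{-}\mathrm{mod}\} \subseteq f\mbox{-}wide(A)$.

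The main obstacle I expect is the careful identification of the map $R \otimes_A B \to B \otimes_A B$ with the identity on $B$ under the two canonical isomorphisms; this is really just bookkeeping with left and right $A$-module structures, but it must be done carefully. Everything else --- the two Nakayama invocations (on the right for $C$ and on the left for $B$) and the appeal to Lemma \ref{surjective} --- is routine.
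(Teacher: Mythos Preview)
Your proof is correct and takes a genuinely different route from the paper's. The paper argues inside the subcategory $\Xcal_B$: given a nonzero indecomposable $X\in\Xcal_B$, a top-to-socle endomorphism of $X$ (available since $A$ is local) produces a shorter nonzero object in $\Xcal_B$, so by induction the unique simple $S$ lies in $\Xcal_B$; extension-closedness (from $Tor_1^A(B,B)=0$) then forces $\Xcal_B=A\text{-}mod$. Your argument instead works on the ring side: you show directly that $f$ is surjective via a Tor/Nakayama computation, and then invoke Lemma~\ref{surjective}. This has the pleasant side effect of proving a slightly stronger intermediate fact---\emph{every} finite dimensional ring epimorphism out of a local algebra is surjective, with no Tor hypothesis needed---since the vanishing of $C\otimes_A B$ already follows from the degree-zero isomorphism $R\otimes_A B\cong B\otimes_A B$ and right exactness, without using $Tor_1^A(B,B)=0$. (So your sentence ``Using the hypothesis $Tor_1^A(B,B)=0$, the long exact sequence collapses'' is harmless but misplaced: that hypothesis is genuinely needed only at the very end, in your appeal to Lemma~\ref{surjective}.) The paper's approach stays closer to the categorical viewpoint used throughout, while yours gives a clean reduction to an earlier lemma.

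One small point on the final sentence: you write that $\omega$ maps bijectively onto $\{\{0\},A\text{-}mod\}\subseteq f\text{-}wide(A)$, but the lemma asserts that $\omega$ is a bijection onto all of $f\text{-}wide(A)$. This does follow from what you have proved---since f-wide subcategories correspond to finite dimensional ring epimorphisms with $Tor_1^A(B,B)=0$, and you have shown there are exactly two of those---but you should say so explicitly rather than leave the inclusion.
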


\begin{proof}
Take a non-zero finite dimensional ring epimorphism $A\rightarrow B$ with $Tor_1^A(B,B)=0$ and let $X$ be an indecomposable $A$-module in $\Xcal_B$. Since $A$ is local, $X$ is either simple or it admits, via a top-to-socle factorisation, a non-trivial endomorphism with kernel $X'$ that again lies in $\Xcal_B$. Since, in the second case, the length of the $A$-module $X'$ is smaller than the length of $X$, by induction, we conclude that the unique simple $A$-module $S$ belongs to $\Xcal_B$. Thus, using that $\Xcal_B$ is closed under extensions in $A\mbox{-}mod$, it actually contains all finitely generated $A$-modules and the ring epimorphism $A\rightarrow B$ is equivalent to the identity map on $A$.
\end{proof}

In Section \ref{NAKUNI}, we will obtain a further classification result for Nakayama algebras (see Corollary \ref{locNak}). Some partial answer to Question \ref{question} can also be given by a result in \cite{MV} (see Theorem 3.3), here stated for finite dimensional algebras.

\begin{theorem}\label{thm old}
Let $A$ be a finite dimensional $\mathbb{K}$-algebra and $f:A\rightarrow B$ be a finite dimensional and homological ring epimorphism such that the projective dimension of $_AB$ is at most 1. Then $f$ is a universal localisation. In particular,
$\Xcal_B$ belongs to the image of $\omega$ and fulfils the condition $\Xcal_B=(^*\!\!\Xcal_B)^*$.
\end{theorem}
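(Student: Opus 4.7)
The plan is to reduce the main assertion to \cite{MV}, Theorem~3.3, and then read off the remaining two conclusions from Proposition~\ref{prop uniloc}(3); the statement itself signals this strategy with the phrase ``can be deduced from \cite{MV}''. So the approach is one of specialisation, followed by a routine application of the machinery already built up in Section~\ref{UNIFDA}.

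First I would verify that the hypotheses of \cite{MV}, Theorem~3.3 are met verbatim: $f$ is a homological ring epimorphism and the projective dimension of ${}_AB$ is at most one. That result then provides a set $\Sigma$ of maps between projective $A$-modules such that $f$ is equivalent to the universal localisation $A\to A_\Sigma$. Since $A$ and $B$ are finite dimensional over $\mathbb{K}$, the module ${}_AB$ is finitely generated and admits a projective resolution $0\to P_1\xrightarrow{\sigma}P_0\to B\to 0$ with $P_0,P_1$ finitely generated projective; this keeps $\Sigma$ within maps between finitely generated projectives, so $f$ lies in $fd\mbox{-}uniloc(A)$ and $\Xcal_B=\omega(f)$ belongs to the image of $\omega$ by the very definition of this map. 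Finally, the equality $\Xcal_B=({}^*\!\!\Xcal_B)^*$ is precisely the concluding sentence of Proposition~\ref{prop uniloc}(3), now applied to the universal localisation $f$.

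The only genuine obstacle is the invocation of \cite{MV}, Theorem~3.3, whose substantive content is the construction of an explicit $\Sigma$ with $A_\Sigma$ equivalent to $B$ under the two hypotheses; given that we may cite it, the rest of the argument is formal. One should, however, take care that the construction in \cite{MV} (often written for arbitrary rings) really does land inside our finitely generated framework: this is automatic here, because $B$ is finitely generated over $A$ and all minimal projective presentations in play are by finitely generated projectives, so the essential images, sets of maps, and subcategories involved stay inside $A\mbox{-}mod$, which is the exact setting in which $\omega$ and Proposition~\ref{prop uniloc} are formulated.
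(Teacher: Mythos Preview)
Your proposal is correct and matches the paper's approach: the paper presents this theorem as a direct citation of \cite{MV}, Theorem~3.3 (specialised to finite dimensional algebras) without a separate proof, and the ``In particular'' clause follows formally from the definition of $\omega$ together with Proposition~\ref{prop uniloc}(3), exactly as you indicate. One small comment: membership in $fd\mbox{-}uniloc(A)$ is simply the hypothesis that $B$ is finite dimensional, not a consequence of $\Sigma$ consisting of maps between finitely generated projectives (the latter is already built into the definition of universal localisation).
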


There is an immediate corollary for some self-injective $\mathbb{K}$-algebras, motivated by \cite{LY}, which relates to an open conjecture by Tachikawa (\cite{T}, Section 8). He conjectured that for a finite dimensional and self-injective $\mathbb{K}$-algebra $A$ and a finitely generated $A$-module $M$, the condition $Ext_A^i(M,M)=0$ for $i>0$ already implies that $M$ is projective. Recall that a finite dimensional $\mathbb{K}$-algebra $A$ is called \textbf{self-injective}, if the free $A$-module of rank one is also injective. The conjecture was proven for several classes of algebras, e.g.,
\begin{itemize}
\item for group algebras of finite groups (see \cite{S}, Chapter 3);
\item for self-injective algebras of finite representation type (see \cite{S}, Chapter 3);
\item for symmetric algebras with radical cube zero (see \cite{H}, Theorem 3.1);
\item for local and self-injective algebras with radical cube zero (see \cite{H}, Theorem 3.4).
\end{itemize}

\begin{corollary}\label{cor hom}
Let $A$ be a finite dimensional and self-injective $\mathbb{K}$-algebra fulfilling the Tachikawa conjecture.
Then all finite dimensional and homological ring epimorphisms $f:A\rightarrow B$ are universal localisations, turning $B$ into a projective $A$-module. Moreover, the $\mathbb{K}$-algebra $B$ is again self-injective.
\end{corollary}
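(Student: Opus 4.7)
The plan is to reduce to Theorem \ref{thm old} by verifying that $_AB$ has projective dimension at most one, and then to invoke the Tachikawa conjecture to get the stronger statement that $_AB$ is actually projective.

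First, since $f$ is finite dimensional, the ring $B$ is a finitely generated $A$-module. Viewing $B$ as a (left) module over itself, we have $\operatorname{Ext}_B^i(B,B) = 0$ for all $i > 0$. Using that $f$ is homological, the equivalence $\operatorname{Ext}_A^i(M,N) \cong \operatorname{Ext}_B^i(M,N)$ for all $B$-modules $M,N$ (recalled in Section \ref{Pre}) yields $\operatorname{Ext}_A^i(B,B) = 0$ for all $i > 0$. Since $A$ is assumed to satisfy the Tachikawa conjecture, this forces $_AB$ to be a projective $A$-module. In particular, the projective dimension of $_AB$ is zero, and Theorem \ref{thm old} applies to conclude that $f$ is a universal localisation.

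It remains to show that $B$ is self-injective. Because $A$ is self-injective, projective $A$-modules are injective, so $_AB$ is an injective $A$-module. Now let $0 \to M \to N$ be an exact sequence in $B\text{-Mod}$. By Proposition \ref{ring epi}, the restriction functor $f_*\colon B\text{-Mod}\to A\text{-Mod}$ is fully faithful, so $\operatorname{Hom}_B(-,B) = \operatorname{Hom}_A(f_*(-),f_*B)$ on $B$-modules. Applying $\operatorname{Hom}_A(-,B)$ to the restricted sequence and using the injectivity of $_AB$ gives a surjection $\operatorname{Hom}_A(N,B)\twoheadrightarrow\operatorname{Hom}_A(M,B)$, hence a surjection $\operatorname{Hom}_B(N,B)\twoheadrightarrow\operatorname{Hom}_B(M,B)$. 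Thus $_BB$ is injective, i.e., $B$ is self-injective.

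The only nontrivial input is the Tachikawa conjecture itself, which is precisely what is being assumed for $A$; everything else is a short formal argument combining the homological property of $f$, Theorem \ref{thm old}, and the fully faithful restriction from Proposition \ref{ring epi}. No obstacle beyond bookkeeping is expected.
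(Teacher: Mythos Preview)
Your proof is correct and follows essentially the same route as the paper: use the homological property of $f$ to get $\operatorname{Ext}_A^i(B,B)=0$, apply the Tachikawa conjecture to conclude $_AB$ is projective, invoke Theorem~\ref{thm old}, and then deduce self-injectivity of $B$ from injectivity of $_AB$ via full faithfulness of restriction. The only difference is that you spell out the injectivity argument for $_BB$ via $\operatorname{Hom}$-surjectivity, whereas the paper phrases it as ``$\Xcal_B$ is a full subcategory of $A\mbox{-}mod$''; these are the same argument.
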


\begin{proof}
Let $f:A\rightarrow B$ be a finite dimensional and homological ring epimorphism. Since $f$ is homological, we know that $Ext_A^n(B,B)\cong Ext_B^n(B,B)=0$ for all $n>0$. Since, by assumption, the $A$-module $_AB$ is finitely generated and the Tachikawa conjecture holds for $A$, $_AB$ must be projective. Consequently, by Theorem \ref{thm old}, $f$ is a universal localisation. Moreover, since the $\mathbb{K}$-algebra $A$ is self-injective, $_AB$ is also an injective $A$-module. Using that $\Xcal_B$ is a full subcategory of $A\mbox{-}mod$, it follows that $_BB$ is an injective $B$-module and, thus, the $\mathbb{K}$-algebra $B$ is self-injective.
\end{proof}

\begin{remark}
Certain group algebras allow a classification of the finite dimensional universal localisations along these lines. For example, let $A$ be the group algebra over $\mathbb{K}$ of a finite $p$-group for a prime $p$. Then a finite dimensional ring epimorphism is a universal localisation if and only if it is homological. Moreover, mapping a finite dimensional universal localisation $A_{\Sigma}$ to $\Xcal_{A_{\Sigma}}$\, yields a bijection
$$\omega:fd\mbox{-}uniloc(A)\longrightarrow f\mbox{-}wide(A).$$ 
In fact, if the characteristic of $\mathbb{K}$ equals the prime $p$, then $A$ is local and we are in the case of Lemma \ref{local}. Otherwise, by Maschke's theorem, the algebra $A$ is semisimple and the claim follows from Proposition \ref{cor hered}.
\end{remark}

For the sake of completeness, we finish the section with two examples of universal localisations of a finite dimensional algebra which are infinite dimensional over the ground field. Note that this phenomena occurs rather frequently, keeping in mind \cite{NRS}. There it was shown that (up to Morita equivalence) every finitely presented algebra appears as the universal localisation of a finite dimensional algebra.

\begin{example}[\cite{NRS}, Section 1]
Let $B$ be the first Weyl algebra, i.e., $B$ is given as the quotient of $\mathbb{K}\!\!<x,y>$ by the two-sided ideal generated by $xy-yx-1$. In particular, $B$ is infinite dimensional over $\mathbb{K}$. Now consider the bound path algebra $A$ over $\mathbb{K}$ given by the quiver
$$\xymatrix{1\ar@<1ex>[r]^{\alpha_1}\ar@<-1ex>[r]_{\alpha_2} & 2\ar@<1ex>[r]^{\beta_1}\ar@<-1ex>[r]_{\beta_2} & 3\ar@<1ex>[r]^{\gamma_1}\ar@<-1ex>[r]_{\gamma_2} & 4}$$ 
and the two-sided ideal generated by $\gamma_2\beta_1\alpha_1-\gamma_1\beta_1\alpha_2$ and $\gamma_2\beta_2\alpha_1-\gamma_1\beta_2\alpha_2-\gamma_1\beta_1\alpha_1$. Then the universal localisation of $A$ one obtains by inverting the arrows $\alpha_1,\beta_1$ and $\gamma_1$ is given by the matrix algebra $M_4(B)$. Note that all non-trivial modules over the localisation are infinitely generated over $A$. Consequently, the example tells us that to check if a universal localisation of a finite dimensional algebra $A$ is finite dimensional, it is not sufficient to see that the finitely generated $A$-modules over the localisation admit a finite generator.
\end{example}

\begin{remark}
If $A$ is a finite dimensional and hereditary $\mathbb{K}$-algebra, it follows from \cite{KS} (Proposition 4.2) that a universal localisation $A_\Sigma$ of $A$ is finite dimensional if and only if there is a finitely generated $A$-module $X$ with $Ext_A^1(X,X)=0$ such that $A_{\Sigma}$ and $A_{\{X\}}$ lie in the same epiclass of $A$.
\end{remark}

In general, such an $A$-module $X$ will not exist for a given universal localisation.

\begin{example}
Consider the Kronecker algebra $A=\begin{pmatrix} \mathbb{K} & 0 \\ \mathbb{K}^2 & \mathbb{K} \end{pmatrix}$ and a quasi-simple regular $A$-module $S$. In particular, we have $Ext_A^1(S,S)\not= 0$. It is well-known (see, for example, \cite{Sch2}) that the universal localisation of $A$ at $\{S\}$ is given by the matrix algebra $M_2(\mathbb{K}[x])$, which is clearly infinite dimensional over $\mathbb{K}$. Note that the $A$-module structure of $M_2(\mathbb{K}[x])$, induced by the ring epimorphism, depends on the choice of $S$.
\end{example}

\section{Tilting modules and universal localisations for hereditary algebras}\label{TULHA}
We begin this section with a small lemma, stated in \cite{Sch4} without a proof.
\begin{lemma}\label{lem inj}
Let $A$ be a finite dimensional and hereditary $\mathbb{K}$-algebra. Then a universal localisation $A\rightarrow A_{\Sigma}$ is monomorphic if and only if it is pure.
\end{lemma}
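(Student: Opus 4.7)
My plan is to split the if-and-only-if into its two implications and observe that only one makes essential use of the hereditary hypothesis.

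For the forward direction, suppose $f\colon A \to A_{\Sigma}$ is monomorphic and let $e$ be a non-zero idempotent of $A$. Then $f(e)$ is a non-zero idempotent of $A_{\Sigma}$, and I would use the canonical isomorphism $A_{\Sigma}\otimes_A Ae \cong A_{\Sigma}f(e)$ sending $y\otimes ae \mapsto yf(a)f(e)$. Since $f(e) = 1_{A_\Sigma}\cdot f(e)$ is a non-zero element of the right-hand side, the left-hand side is non-zero, so $A_{\Sigma}$ is pure. This argument nowhere uses that $A$ is hereditary.

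For the converse, I would assume $A_{\Sigma}$ pure and set $K := \ker(f)$. The key step will be to prove $A_{\Sigma}\otimes_A K = 0$. Since $A$ is hereditary, $K$ is a projective left $A$-submodule of $A$; being a submodule of the finite dimensional algebra $A$ it is finitely generated, so by Krull--Schmidt
$$K \cong \bigoplus_{i=1}^{n} Ae_i$$
for certain primitive idempotents $e_i \in A$. Applying $A_{\Sigma}\otimes_A -$ to the short exact sequence
$$0 \to K \to A \to A/K \to 0$$
preserves exactness thanks to the projectivity (hence flatness) of $K$, and after the canonical identifications $A_{\Sigma}\otimes_A A \cong A_{\Sigma}$ and $A_{\Sigma}\otimes_A A/K \cong A_{\Sigma}/A_{\Sigma}f(K) = A_{\Sigma}$ (using $f(K)=0$), the right-hand map becomes the identity of $A_{\Sigma}$. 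This forces $A_{\Sigma}\otimes_A K = 0$, so $A_{\Sigma}\otimes_A Ae_i = 0$ for each $i$. Purity then forces each $e_i$ to be zero, whence $K = 0$ and $f$ is monomorphic.

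The main obstacle will be to execute the identifications cleanly: one must track the tensor product through both isomorphisms $A_{\Sigma}\otimes_A A \cong A_{\Sigma}$ and $A_{\Sigma}\otimes_A (A/K) \cong A_{\Sigma}$ in such a way that the induced map between them is manifestly the identity. The hereditary hypothesis enters crucially in two places: it makes $K$ flat (so tensoring stays exact) and it makes $K$ split as a sum of modules of the form $Ae_i$, producing the honest primitive idempotents of $A$ needed to contradict purity.
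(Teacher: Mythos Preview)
Your approach mirrors the paper's: both argue the forward direction by the same idempotent calculation, and for the converse both exploit that a left ideal of a hereditary ring is projective, hence isomorphic to a sum of modules $Ae_i$, in order to contradict purity. (The paper works with a principal sub-ideal $I=Ax\subseteq\ker f$ rather than all of $K$, but the idea is identical.)

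However, your justification for the key step $A_\Sigma\otimes_A K=0$ does not hold up. You assert that tensoring $0\to K\to A\to A/K\to 0$ with $A_\Sigma$ stays exact ``thanks to the projectivity (hence flatness) of $K$''. Flatness of $K$ as a left $A$-module means that $-\otimes_A K$ is exact on right $A$-modules; it says \emph{nothing} about whether $A_\Sigma\otimes_A-$ preserves the injection $K\hookrightarrow A$. For that you need $\mathrm{Tor}_1^A(A_\Sigma,A/K)=0$. Without it, right-exactness only tells you that the \emph{image} of $A_\Sigma\otimes_A K$ in $A_\Sigma$ vanishes, and since the idempotents $e_i$ live in $A$ but not in $K$ itself (you only have $K\cong\bigoplus Ae_i$, not equality), you cannot deduce $A_\Sigma f(e_i)=0$ directly from $f(K)=0$.

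The gap is repairable, and the hereditary hypothesis together with the localisation property is again what saves you: from the embedding $A/K\cong\mathrm{im}(f)\hookrightarrow A_\Sigma$ and the associated long exact $\mathrm{Tor}^A(A_\Sigma,-)$ sequence, the vanishing $\mathrm{Tor}_2^A=0$ (heredity) combined with $\mathrm{Tor}_1^A(A_\Sigma,A_\Sigma)=0$ (a general feature of universal localisation) forces $\mathrm{Tor}_1^A(A_\Sigma,A/K)=0$. With this correction in place your argument goes through.
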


\begin{proof}
First, we observe that as a map of $A$-modules we can write the ring homomorphism $f:A\rightarrow A_{\Sigma}$ in the following form:
$$f:A\rightarrow A_{\Sigma}\otimes_A A$$
$$a\mapsto f(a)\otimes 1_A=1_{A_{\Sigma}}\otimes a$$
Now assume that $f$ is monomorphic and suppose there is some idempotent $e\not= 0$ in $A$ with $A_{\Sigma}\otimes_A Ae=0$. It follows that $f(Ae)=1_{A_{\Sigma}}\otimes Ae=0$ and, therefore, $Ae\subseteq ker(f)$, a contradiction. Conversely, assume that the localisation $A_\Sigma$ is pure and suppose that $ker(f)\not= 0$. Take some $x\not= 0$ in $ker(f)$ and consider the left ideal $I$ of $A$ generated by $x$. Clearly, $I\subseteq ker(f)$. Since $A$ is hereditary, $I$ is a projective left $A$-module of the form $Ae$ for some idempotent $e\not= 0$ in $A$. Now it follows that
$0=f(Ae)=1_{A_{\Sigma}}\otimes Ae$ and, thus, we get $A_{\Sigma}\otimes_A Ae=0$, again yielding a contradiction.
\end{proof}

Note that monomorphic universal localisations $A\rightarrow A_{\Sigma}$ are always pure. But the converse will fail in general (compare Example \ref{Ex Nak} and Example \ref{ex not induced}).
In the hereditary case, the following theorem establishes a bijection between support tilting $A$-modules and finite dimensional universal localisations of $A$.

\begin{theorem}\label{hered tilt}
Let $A$ be a finite dimensional and hereditary $\mathbb{K}$-algebra.
\begin{enumerate}
\item There is a bijection
$$\Psi_A:s\mbox{-}tilt(A)\longrightarrow fd\mbox{-}uniloc(A)$$
by mapping a support tilting $A$-module $T$ to $A_{\Sigma_T}\!:=A_{^{\perp}(\alpha(GenT))}$. The inverse is given by mapping a universal localisation $A_{\Sigma}$ to $T_{\Sigma}$, the sum of the indecomposable Ext-projectives in $Gen(\Sigma^{\perp})$.
\item $\Psi_A$ restricts to a bijection between
$$tilt(A)\longrightarrow fd\mbox{-}uniloc^p(A).$$
Moreover, regarding the inverse, $T_{\Sigma}$ is equivalent to $A_{\Sigma}\oplus A_{\Sigma}/A$.
\item $\Psi_A$ restricts to a bijection between
$$s\mbox{-}tilt(A/AeA)\longrightarrow fd\mbox{-}uniloc_e(A)$$
for an idempotent $e$ in $A$. In particular, if $T$ is equivalent to $_A\!(A/AeA)$, it is mapped to $A_{\Sigma_T}=A/AeA$.
\end{enumerate}
\end{theorem}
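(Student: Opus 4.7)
The plan is to obtain (1) by composing two bijections already at our disposal, then to obtain (2) and (3) by restricting this bijection via an explicit dictionary between annihilation properties of $T_{\Sigma}$ and tensor-vanishing properties of $A_{\Sigma}$.

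For (1), Theorem \ref{Thm TI} gives a bijection $s\mbox{-}tilt(A)\to f\mbox{-}wide(A)$ via $T\mapsto\alpha(GenT)$, while Proposition \ref{cor hered} gives a bijection $f\mbox{-}wide(A)\to fd\mbox{-}uniloc(A)$ via $\Ccal\mapsto A_{^{\perp}\Ccal}$. Their composition is exactly the map $\Psi_A$ of the statement. To match the stated inverse, start from $A_{\Sigma}$: its essential image is the wide subcategory $\Xcal_{A_{\Sigma}}=\Sigma^{\perp}$, and applying the inverse side of Theorem \ref{Thm TI} (which sends a wide subcategory $\Ccal$ to the sum of indecomposable Ext-projectives in $Gen\Ccal$) recovers $T_{\Sigma}$ as the sum of indecomposable Ext-projectives in $Gen(\Sigma^{\perp})$.

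For (2), I would first set up a dictionary between purity of $A_{\Sigma}$ and faithfulness of the torsion class $GenT_{\Sigma}$. Using that the $A_{\Sigma}$-action on an object $X$ of $\Xcal_{A_{\Sigma}}$ factors through $f:A\rightarrow A_{\Sigma}$, the equality $A_{\Sigma}\otimes_A Ae=0$ is equivalent to $f(e)=0$ in $A_{\Sigma}$, hence equivalent to $eX=0$ for every $X\in\Xcal_{A_{\Sigma}}$. Since $Gen\Xcal_{A_{\Sigma}}=GenT_{\Sigma}$ and quotients of $A/AeA$-modules are $A/AeA$-modules, this says exactly $GenT_{\Sigma}\subseteq(A/AeA)\mbox{-}mod$, equivalently $eT_{\Sigma}=0$. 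Thus $A_{\Sigma}$ is pure if and only if $T_{\Sigma}$ is faithful if and only if $T_{\Sigma}$ is a tilting (and not merely support tilting) module, giving the restricted bijection. For the identification $T_{\Sigma}\sim A_{\Sigma}\oplus A_{\Sigma}/A$: when $A_{\Sigma}$ is pure it is monomorphic by Lemma \ref{lem inj}, and since $A$ is hereditary we automatically have $pd(_AA_{\Sigma})\le 1$ and $Tor_1^A(A_{\Sigma},A_{\Sigma})=0$, so Proposition \ref{prop tilt} supplies a tilting module $A_{\Sigma}\oplus A_{\Sigma}/A$. Its generated torsion class is $GenA_{\Sigma}$, which coincides with $Gen\Xcal_{A_{\Sigma}}=GenT_{\Sigma}$ because $A_{\Sigma}\in\Xcal_{A_{\Sigma}}$ and every object of $\Xcal_{A_{\Sigma}}$ is an $A_{\Sigma}$-module, hence lies in $GenA_{\Sigma}$. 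Two basic tilting modules with the same generated torsion class share the same indecomposable Ext-projectives, so they are equivalent.

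For (3), note that $s\mbox{-}tilt(A/AeA)$ is the set of $T\in s\mbox{-}tilt(A)$ with $eT=0$. By the dictionary established in the previous paragraph, this is precisely the image under $\Psi_A^{-1}$ of the $e$-annihilating universal localisations, so $\Psi_A$ restricts as claimed. Finally, for $T\sim{}_A(A/AeA)$ one has $GenT=(A/AeA)\mbox{-}mod$ (as a full subcategory of $A\mbox{-}mod$), and $\alpha$ fixes this subcategory: indeed, every morphism in $(A/AeA)\mbox{-}mod$ has kernel in $(A/AeA)\mbox{-}mod$, so $\alpha((A/AeA)\mbox{-}mod)=(A/AeA)\mbox{-}mod$. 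This wide subcategory is the essential image of the surjective ring epimorphism $A\rightarrow A/AeA$, which is itself a universal localisation by Lemma \ref{surjective}, yielding $\Psi_A(A/AeA)=A/AeA$.

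The main obstacle will be the precise translation in Paragraph 2, namely the equivalence $A_{\Sigma}\otimes_A Ae=0 \Leftrightarrow eX=0$ for all $X\in\Xcal_{A_{\Sigma}}$, and the resulting identification of ``pure'' with ``faithful''; everything else is essentially a bookkeeping exercise building on Theorem \ref{Thm TI}, Proposition \ref{cor hered}, Lemma \ref{lem inj}, and Proposition \ref{prop tilt}.
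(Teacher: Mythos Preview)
Your proposal is correct. Part (1) is exactly the paper's argument. For (2) and (3) your route differs from the paper's: you set up the equivalence $A_{\Sigma}\otimes_A Ae=0\Leftrightarrow f(e)=0\Leftrightarrow e\cdot\Xcal_{A_{\Sigma}}=0\Leftrightarrow eT_{\Sigma}=0$ once and for all, which immediately gives both directions of (2) and all of (3) simultaneously. The paper instead proves the implication ``$T$ tilting $\Rightarrow A_{\Sigma_T}$ pure'' by a hands-on argument: given an indecomposable projective $P$, it uses the tilting sequence $0\to P\to T_0\to T_1\to 0$ together with the description of $\alpha(GenT)$ via split-projectives (from Theorem~\ref{Thm TI}) to produce a nonzero map $P\to\alpha(GenT)$, whence $P\notin{}^{\perp}\Xcal_{A_{\Sigma_T}}$ and so $A_{\Sigma_T}\otimes_A P\neq 0$. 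The converse direction and the ``moreover'' clause are handled in the paper just as you do, via Lemma~\ref{lem inj} and Proposition~\ref{prop tilt}; and for (3) the paper again argues each direction separately, reusing the technique of (2). Your dictionary is more streamlined and treats (2) and (3) uniformly; the paper's approach makes more visibly explicit where the tilting axiom (T3) enters. One terminological quibble: your chain literally yields ``$A_{\Sigma}$ pure $\Leftrightarrow$ $T_{\Sigma}$ sincere'', and you then need the (easy) fact that a sincere support tilting module is already tilting, hence faithful---you wrote ``faithful'' where ``sincere'' is what the dictionary actually produces, though the two notions coincide in this setting.
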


\begin{proof}
ad(1): Follows from Theorem \ref{Thm TI} and Proposition \ref{cor hered}.

ad(2): First, take a basic tilting $A$-module $T$ and let $P$ be an indecomposable projective $A$-module. We want to show that $Hom_A(P,\alpha(GenT))\not= 0$. Since $T$ is tilting, we have a short exact sequence of the form
$$\xymatrix{0\ar[r] & P\ar[r]^{f'} & T_0\ar[r] & T_1\ar[r] & 0}$$
with $T_0$ and $T_1$ in $addT$. Now suppose that $T_0\notin\alpha(GenT)$. Since, by \cite{IT} (see Proposition 2.15), we know that $\alpha(GenT)$ is given by
$$\{X\in GenT\mid \forall(g:Y\twoheadrightarrow X)\in GenT, Y \text{ split\mbox{-}projective}: ker(g)\in GenT\},$$
there is a split-projective module $Z$ in $GenT$ (in fact, $Z$ lies in $addT$) and a surjection $g:Z\twoheadrightarrow T_0$ such that $ker(g)\notin GenT$. Since $P$ is projective, we can lift the map $f'$ to get an injective map $h:P\rightarrow Z$ with $f'=g\circ h$. But the split-projective modules in $GenT$ must also belong to $\alpha(GenT)$ (see Theorem \ref{Thm TI}) and we get that $Hom_A(P,\alpha(GenT))\not= 0$. Therefore, $P$ does not lie in $^{\perp}(\alpha(GenT))= ^{\perp}\!\!\Xcal_{A_{\Sigma_T}}$. It follows that $A_{\Sigma_T}$ is pure.

Conversely, let $A_{\Sigma}$ be a pure and finite dimensional universal localisation of $A$. By Lemma \ref{lem inj}, the morphism $f:A\rightarrow A_{\Sigma}$ is monomorphic and we get the following short exact sequence of $A$-modules
$$\xymatrix{0\ar[r] & A\ar[r]^f & A_{\Sigma}\ar[r] & A_{\Sigma}/A \ar[r] & 0}.$$
We already know, by Proposition \ref{prop tilt}, that $T_{\Sigma}':=A_{\Sigma}\oplus A_{\Sigma}/A$ is a tilting $A$-module. Therefore, it suffices to show that $GenT_{\Sigma}=GenT_{\Sigma}'$. This follows from Theorem \ref{Thm TI} and the construction of $\Psi_A$ in (1), since
$$GenT_{\Sigma}\overset{Thm. \ref{Thm TI}}{=}Gen(\alpha(GenT_{\Sigma}))\overset{(1)}{=}Gen\Xcal_{A_{\Sigma}}=GenA_{\Sigma}=GenT_{\Sigma}'.$$

ad(3): For a given idempotent $e$ in $A$, a basic support tilting $A$-module $T$ belongs to $s\mbox{-}tilt(A/AeA)$ if and only if $T$ carries the natural structure of an $A/AeA$-module (i.e., $T\in\Xcal_{A/AeA}$) or, equivalently, we have $Hom_A(Ae,T)=0$. Similar to the first implication in (2), one can show that $A_{\Sigma_T}\otimes_A Ae=0$ implies $Hom_A(Ae,T)=0$. In other words, if the localisation $A_{\Sigma_T}$ is $e$-annihilating, then $T$ belongs to $s\mbox{-}tilt(A/AeA)$.

Conversely, if $Hom_A(Ae,T)=0$, we get that $Hom_A(Ae,GenT)=0$, since $Ae$ is projective. In particular, $Hom_A(Ae,\alpha(GenT))$ must be zero. It follows that $Ae$ lies in $^{\perp}(\alpha(GenT))= ^{\perp}\!\!\Xcal_{A_{\Sigma_T}}$ such that $A_{\Sigma_T}\otimes_A Ae=0$. Altogether, $T$ belongs to $s\mbox{-}tilt(A/AeA)$ if and only if $A_{\Sigma_T}$ is $e$-annihilating. 
Finally, if $T$ is equivalent to $_A(A/AeA)$, then $GenT$ is already abelian and we get the following chain of equalities
$$\Xcal_{A_{\Sigma_T}}=\alpha(GenT)=GenT=Gen(A/AeA)=\Xcal_{A/AeA}$$
such that $A_{\Sigma_T}$ and $A/AeA$ lie in the same epiclass of $A$.
\end{proof}

\begin{corollary}\label{cor hered tilt}
Let $A$ be as above. For an idempotent $e$ in $A$ there is a commutative diagram of bijections 
$$\xymatrix{fd\mbox{-}uniloc_e(A)\ar[rr]^{\Phi_e} & & fd\mbox{-}uniloc(A/AeA) \\ & s\mbox{-}tilt(A/AeA)\ar[ul]^{\Psi_A}\ar[ur]_{\Psi_{A/AeA}} &}$$
where $\Phi_e$ maps a universal localisation $A_{\Sigma}$ of $A$ to the universal localisation of $A/AeA$ at the set $$\!^\perp\Xcal_{A_{\Sigma}}\cap\Xcal_{A/AeA}$$ 
of finitely generated $A/AeA$-modules. The inverse is given by mapping a universal localisation $(A/AeA)_{\Sigma'}$ of $A/AeA$ to the universal localisation of $A$ at the set $\Sigma'\cup\{Ae\}$ of finitely generated $A$-modules.
\end{corollary}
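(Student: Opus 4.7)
The plan is to derive the corollary as a formal consequence of Theorem \ref{hered tilt} and then read off the explicit forms of $\Phi_e$ and $\Phi_e^{-1}$. First I would observe that whenever $A$ is finite dimensional hereditary, so is $A/AeA$: modulo Morita equivalence one writes $A = \mathbb{K}Q$ for an acyclic quiver $Q$, the ideal $AeA$ is spanned by the paths passing through a vertex in the support of $e$, and $A/AeA \cong \mathbb{K}Q'$ for the full subquiver $Q'$ on the remaining vertices. Hence $\Psi_A$ (restricted to $s\mbox{-}tilt(A/AeA)\to fd\mbox{-}uniloc_e(A)$ as in Theorem \ref{hered tilt}(3)) and $\Psi_{A/AeA}$ (Theorem \ref{hered tilt}(1) applied to $A/AeA$) are both bijections, so $\Phi_e := \Psi_{A/AeA}\circ\Psi_A^{-1}$ is automatically a bijection making the triangle commute.

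For the forward description I would chase $A_\Sigma\in fd\mbox{-}uniloc_e(A)$ through the two legs. Set $T := \Psi_A^{-1}(A_\Sigma)$, a basic support tilting $A/AeA$-module; by Theorem \ref{Thm TI} together with Proposition \ref{cor hered} we have $\alpha(GenT) = \Xcal_{A_\Sigma}$. The key observation is that $\Xcal_{A/AeA} = \{M\in A\mbox{-}mod \mid eM = 0\}$ is a Serre subcategory of $A\mbox{-}mod$, so the constructions $Gen$ and $\alpha$ as well as the functors $\Hom$ and $\Ext_A^1$ agree with their $A/AeA$-analogues on $A/AeA$-modules. Applying $\Psi_{A/AeA}$ to $T$ thus gives the universal localisation of $A/AeA$ at $^{\perp}\alpha(GenT)$ computed inside $(A/AeA)\mbox{-}mod$, which coincides with $^{\perp}\Xcal_{A_\Sigma}\cap\Xcal_{A/AeA}$, matching the claimed formula.

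For the inverse, given $(A/AeA)_{\Sigma'}\in fd\mbox{-}uniloc(A/AeA)$, I would verify that the universal localisation of $A$ at $\Sigma'\cup\{Ae\}$ equals $\Psi_A(\Psi_{A/AeA}^{-1}((A/AeA)_{\Sigma'}))$. By the universal property of Theorem \ref{uni loc}, $A_{\Sigma'\cup\{Ae\}}$ is the terminal ring map $A\to B$ with $B\otimes_A Ae = 0$ (forcing $B$ to factor through $A/AeA$) and $B\otimes_A\sigma_0^X$ invertible for every $X\in\Sigma'$. For such a factorisation $A\to A/AeA\to B$ we have $B\otimes_A\sigma_0^X = B\otimes_{A/AeA}(A/AeA\otimes_A\sigma_0^X)$, and $A/AeA\otimes_A\sigma_0^X$ differs from the minimal $(A/AeA)$-projective presentation of $X$ only by the trivial summand recalled at the start of Section \ref{UNIFDA}; hence the condition is equivalent to inverting the minimal $(A/AeA)$-projective presentations. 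The terminal such $B$ is therefore $(A/AeA)_{\Sigma'}$ viewed as an $A$-algebra, as required.

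The main obstacle will be this last bookkeeping step, the careful passage between minimal $A$-projective presentations and minimal $(A/AeA)$-projective presentations for modules $X\in\Xcal_{A/AeA}$, which is resolved by the trivial-summand argument at the start of Section \ref{UNIFDA}. Everything else reduces to a diagram chase through the bijections of Theorem \ref{hered tilt} and the Serre-subcategory property of $\Xcal_{A/AeA}$.
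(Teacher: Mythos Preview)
Your proposal is correct and follows the same approach as the paper: define $\Phi_e := \Psi_{A/AeA}\circ\Psi_A^{-1}$ and then unpack the explicit formulas. The paper's own proof is terser, simply stating that ``the precise assignments follow from the construction,'' whereas you spell out the two chases in detail; your extra care in matching the $A$- and $A/AeA$-perpendiculars via the homological ring epimorphism $A\to A/AeA$, and in comparing minimal projective presentations over the two rings, is exactly what is implicit in that phrase.
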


\begin{proof}
On the one hand, by Theorem \ref{hered tilt}(3), we can identify the (basic) support tilting $A/AeA$-modules via $\Psi_A$ with the finite dimensional and $e$-annihilating universal localisations of $A$. On the other hand, by applying Theorem \ref{hered tilt}(1) to the finite dimensional and hereditary $\mathbb{K}$-algebra $A/AeA$, the map $\Psi_{A/AeA}$ describes a bijection between the (basic) support tilting $A/AeA$-modules and the finite dimensional universal localisations of $A/AeA$. The map $\Phi_e$ is now defined as the composition $\Psi_{A/AeA}\circ\Psi_A^{-1}$. The precise assignments follow from the construction.
\end{proof}

\begin{remark}\label{rem hered}
The inverse of the map $\Psi_A$ in Theorem \ref{hered tilt}(1) can also be expressed as follows: Take a finite dimensional universal localisation $A_{\Sigma}$ of $A$. Either $A_{\Sigma}$ is already pure and, thus, $T_{\Sigma}$ is equivalent to $A_{\Sigma}\oplus A_{\Sigma}/A$ or it exists an idempotent $e$ in $A$ such that $A_{\Sigma}$ is $e$-annihilating and the universal localisation $\Phi_e(A_{\Sigma})$ of $A/AeA$ is pure. Consequently, by Corollary \ref{cor hered tilt} and Theorem \ref{hered tilt}(2), $T_{\Sigma}$ is equivalent to the support tilting $A$-module $\Phi_e(A_{\Sigma})\oplus \Phi_e(A_{\Sigma})/(A/AeA)$, where $\Phi_e(A_{\Sigma})$, regarded as an $A$-module, is isomorphic to $_AA_{\Sigma}$.
\end{remark}

\begin{corollary}\label{cortilt}
For a finite dimensional and hereditary $\mathbb{K}$-algebra $A$ every tilting $A$-module (up to equivalence) arises from a universal localisation. In particular, for every tilting $A$-module $T$ there is a short exact sequence $0\rightarrow A\rightarrow T_1\rightarrow T_2\rightarrow 0$ with $T_1,T_2$ in $addT$ and $Hom_A(T_2,T_1)=0$.
\end{corollary}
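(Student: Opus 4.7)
The plan is to deduce this corollary directly from Theorem \ref{hered tilt}(2), which already gives a bijection between equivalence classes of tilting $A$-modules and pure finite dimensional universal localisations, together with the explicit description of the inverse. So given a tilting $A$-module $T$, I would first pass to its associated pure finite dimensional universal localisation $A_{\Sigma_T}=\Psi_A(T)$ and use the stated fact that $T$ is equivalent to $A_{\Sigma_T}\oplus A_{\Sigma_T}/A$. Since any pure universal localisation is monomorphic by Lemma \ref{lem inj}, the map $f:A\rightarrow A_{\Sigma_T}$ is a monomorphic, finite dimensional ring epimorphism with $Tor_1^A(A_{\Sigma_T},A_{\Sigma_T})=0$ and, because $A$ is hereditary, the projective dimension of $_AA_{\Sigma_T}$ is at most one. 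Then Proposition \ref{prop tilt} immediately shows that $T$ arises from the universal localisation $A\rightarrow A_{\Sigma_T}$, proving the first assertion.

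For the second assertion, I set $T_1:=A_{\Sigma_T}$ and $T_2:=A_{\Sigma_T}/A$. The short exact sequence
$$\xymatrix{0\ar[r] & A\ar[r]^f & A_{\Sigma_T}\ar[r] & A_{\Sigma_T}/A\ar[r] & 0}$$
exists because $f$ is injective, and both $T_1$ and $T_2$ lie in $addT$ since $T$ is (up to equivalence) their direct sum and equivalent basic tilting modules share the same indecomposable summands (the Ext-projective objects in the common torsion class).

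The remaining task is to verify $Hom_A(T_2,T_1)=0$, i.e., $Hom_A(A_{\Sigma_T}/A,A_{\Sigma_T})=0$. Applying $Hom_A(-,A_{\Sigma_T})$ to the short exact sequence above yields the exact sequence
$$0\rightarrow Hom_A(A_{\Sigma_T}/A,A_{\Sigma_T})\rightarrow Hom_A(A_{\Sigma_T},A_{\Sigma_T})\xrightarrow{Hom_A(f,A_{\Sigma_T})} Hom_A(A,A_{\Sigma_T}),$$
so it suffices to show that $Hom_A(f,A_{\Sigma_T})$ is an isomorphism. This is precisely the statement that $A_{\Sigma_T}$ lies in $\Xcal_{A_{\Sigma_T}}$: by Proposition \ref{ring epi} the restriction functor is fully faithful, giving $Hom_A(A_{\Sigma_T},A_{\Sigma_T})\cong Hom_{A_{\Sigma_T}}(A_{\Sigma_T},A_{\Sigma_T})\cong A_{\Sigma_T}$, while $Hom_A(A,A_{\Sigma_T})\cong A_{\Sigma_T}$ via Yoneda, and tracing through the identifications one sees that $Hom_A(f,A_{\Sigma_T})$ corresponds to the identity of $A_{\Sigma_T}$. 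Hence $Hom_A(T_2,T_1)=0$, concluding the argument.

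There is no real obstacle here; the whole statement is essentially a repackaging of Theorem \ref{hered tilt}(2) combined with Lemma \ref{lem inj}, and the only small point requiring care is the $Hom$-vanishing, which follows formally from the full faithfulness of restriction along a ring epimorphism.
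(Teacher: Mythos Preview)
Your proof is correct and follows essentially the same route as the paper: both invoke Theorem~\ref{hered tilt}(2) and use the short exact sequence $0\to A\to A_{\Sigma_T}\to A_{\Sigma_T}/A\to 0$ as the required sequence. The only difference is that the paper cites \cite{AS} (Theorem~3.10) for the vanishing $Hom_A(A_{\Sigma_T}/A,A_{\Sigma_T})=0$, whereas you supply a direct argument from the full faithfulness of restriction along a ring epimorphism---which is a nice self-contained alternative.
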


\begin{proof}
Follows from Theorem \ref{hered tilt}(2) and \cite{AS} (see Theorem 3.10). The short exact sequence
$$0\rightarrow A\rightarrow A_{\Sigma_T}\rightarrow A_{\Sigma_T}/A\rightarrow 0$$ fulfils the wanted properties.
\end{proof}

We will further the comparison between a support tilting $A$-module and its associated universal localisation. The following proposition tells us how to read off $A_{\Sigma_T}$ from the $A$-module $T$. 

\begin{proposition}\label{prop split}
Let $A$ be a finite dimensional and hereditary $\mathbb{K}$-algebra and $T$ be a basic support tilting $A$-module which is tilting over the algebra $A/AeA$ for an idempotent $e$ in $A$. Then $A_{\Sigma_T}$ is given by localising at the set of all non split-projective indecomposable direct summands of $T$ in $GenT$ and the $A$-module $Ae$.
\end{proposition}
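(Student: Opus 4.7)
The plan is to compare essential images under the bijection $\omega$ of Proposition \ref{cor hered}: writing $\Sigma:=\{Ae\}\cup T''$, it suffices to verify $\Sigma^{\perp}=\alpha(GenT)=\Xcal_{A_{\Sigma_T}}$ in $A\mbox{-}mod$, for then $A_\Sigma$ and $A_{\Sigma_T}$ map to the same finite-dimensional wide subcategory and hence lie in the same epiclass. I first dispose of the idempotent: by Corollary \ref{cor hered tilt}, the inverse bijection identifies $A_{\Sigma_T}$ with the universal localisation of $A$ at $\{Ae\}\cup\Sigma'_T$, where $(A/AeA)_{\Sigma'_T}=\Phi_e(A_{\Sigma_T})$ is the pure universal localisation of the hereditary algebra $A/AeA$ associated via Theorem \ref{hered tilt}(2) to $T$ regarded as a tilting $A/AeA$-module. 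This reduces the proposition to the \emph{pure case}: for a hereditary algebra $A$ and a basic tilting $A$-module $T$, $A_{\Sigma_T}$ is epi-equivalent to the universal localisation of $A$ at $T''$.

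In the pure case, Theorem \ref{hered tilt}(2) yields $T\sim A_{\Sigma_T}\oplus A_{\Sigma_T}/A$; passing to basic representatives, the indecomposable summands of $A_{\Sigma_T}$ realise the split-projective part $T'$ (consistently with Theorem \ref{Thm TI}, since $A_{\Sigma_T}$ is the projective generator of $\alpha(GenT)\cong A_{\Sigma_T}\mbox{-}mod$), while those of $A_{\Sigma_T}/A$ realise $T''$; the latter relies on the fact that $A\notin\alpha(GenT)$ prevents summands of the cokernel $A_{\Sigma_T}/A$ from lying in $\alpha(GenT)$. For $\alpha(GenT)\subseteq(T'')^{\perp}$, the $\Ext_A^1$-vanishing is Ext-projectivity of $T''_i$ in $GenT$, and the $\Hom_A$-vanishing is obtained by applying $A_{\Sigma_T}\otimes_A-$ to $0\to A\to A_{\Sigma_T}\to A_{\Sigma_T}/A\to 0$ and using the ring-epi identity $A_{\Sigma_T}\otimes_A A_{\Sigma_T}\cong A_{\Sigma_T}$ to deduce $A_{\Sigma_T}\otimes_A T''_i=0$, whence the tensor-restriction adjunction gives $\Hom_A(T''_i,X)=0$ for any $X\in\Xcal_{A_{\Sigma_T}}$.

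For the reverse $(T'')^{\perp}\subseteq\alpha(GenT)=\Xcal_{A_{\Sigma_T}}$, I would apply $\Hom_A(-,X)$ to the same short exact sequence. Since $A_{\Sigma_T}/A$ is a direct sum of copies of the $T''_i$, the hypothesis $X\in(T'')^{\perp}$ forces $\Hom_A(A_{\Sigma_T}/A,X)=0=\Ext_A^1(A_{\Sigma_T}/A,X)$, and the long exact sequence collapses to an $A$-linear isomorphism $\Hom_A(A_{\Sigma_T},X)\to X$ given by $\phi\mapsto\phi(1)$. Transporting the natural left $A_{\Sigma_T}$-action on $\Hom_A(A_{\Sigma_T},X)$ (from right multiplication on $A_{\Sigma_T}$) along this isomorphism endows $X$ with a compatible $A_{\Sigma_T}$-module structure, placing $X$ in $\Xcal_{A_{\Sigma_T}}$. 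I expect the main obstacle to be precisely this transport step: verifying that the transferred $A_{\Sigma_T}$-action on $X$ restricts via $f:A\to A_{\Sigma_T}$ to the original $A$-action, which hinges on the identity $\phi(f(a)\cdot 1_{A_{\Sigma_T}})=a\cdot\phi(1)$ expressing the $A$-linearity of each $\phi$. Once both inclusions are established, Proposition \ref{cor hered} completes the identification of epiclasses.
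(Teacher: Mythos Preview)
Your reduction to the pure case via Corollary~\ref{cor hered tilt} matches the paper's. In the pure case, however, the paper takes a shorter route: it invokes \cite[Corollary~3.7]{MV} to identify $A_{\Sigma_T}$ directly as the localisation of $A$ at the module $A_{\Sigma_T}/A$, and then only needs the identification $add(A_{\Sigma_T}/A)=add(T'')$. You instead prove $(T'')^{\perp}=\Xcal_{A_{\Sigma_T}}$ by hand and invoke the bijection $\omega$. Your argument is more self-contained (it avoids the external reference), and the transport-of-structure step is just the standard observation that $X\in\Xcal_{A_{\Sigma_T}}$ if and only if the counit $Hom_A(A_{\Sigma_T},X)\to X$ of the adjunction $(f_*,Hom_A(A_{\Sigma_T},-))$ is an isomorphism; your verification of compatibility with the $A$-action is correct.

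There is one genuine gap. Your justification that the indecomposable summands of $A_{\Sigma_T}/A$ realise exactly $T''$ appeals to ``$A\notin\alpha(GenT)$'', but this claim fails when $T=A$ and, more importantly, even when it holds it does not by itself exclude summands of $A_{\Sigma_T}/A$ from $\alpha(GenT)$: $\alpha(GenT)$ is not closed under submodules, so knowing that the kernel of $A_{\Sigma_T}\twoheadrightarrow M$ contains $A$ yields no contradiction. The correct argument is already implicit in your next paragraph: from $A_{\Sigma_T}\otimes_A(A_{\Sigma_T}/A)=0$ and the tensor--restriction adjunction you get $Hom_A(A_{\Sigma_T}/A,\Xcal_{A_{\Sigma_T}})=0$, so no nonzero summand of $A_{\Sigma_T}/A$ lies in $\alpha(GenT)$, hence none is split-projective. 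You should establish this vanishing \emph{first} (it does not use the identification $add(T'')=add(A_{\Sigma_T}/A)$, only the short exact sequence), and then both directions of your perpendicular computation go through. This is, incidentally, exactly the $Hom_A(T_2,T_1)=0$ property the paper records in Corollary~\ref{cortilt}.

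A minor point: to conclude via $\omega$ you would need $A_\Sigma$ finite dimensional a priori. It is cleaner to note that your computation $(T'')^{\perp}=\Xcal_{A_{\Sigma_T}}$ works verbatim in $A\mbox{-}Mod$, giving $\Xcal_{A_\Sigma}=\Xcal_{A_{\Sigma_T}}$ there and hence the same epiclass directly.
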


\begin{proof}
By Corollary \ref{cor hered tilt} and Remark \ref{rem hered}, $T$ is equivalent to the support tilting $A$-module 
$$\Phi_e(A_{\Sigma_T})\oplus \Phi_e(A_{\Sigma_T})/(A/AeA),$$
induced by the finite dimensional and pure, thus, monomorphic universal localisation $\Phi_e(A_{\Sigma_T})$ of $A/AeA$. Note that if $T$ is a tilting $A$-module, then $e$ is zero and $\Phi_e$ equals the identity on $fd\mbox{-}uniloc(A)$. By \cite{MV} (see Corollary 3.7), $\Phi_e(A_{\Sigma_T})$ is given by localising with respect to the finitely generated $A/AeA$-module 
$$\Phi_e(A_{\Sigma_T})/(A/AeA)$$ 
and, thus, by Corollary \ref{cor hered tilt}, $A_{\Sigma_T}$ is given by localising at the set $\{\Phi_e(A_{\Sigma_T})/(A/AeA),Ae\}$ of $A$-modules. Consequently, it remains to show that $\Phi_e(A_{\Sigma_T})/(A/AeA)$, viewed as an $A$-module, describes precisely the non split-projective indecomposable direct summands of $T$ in $GenT$. Indeed, one can show that an indecomposable $A$-module in $addT$ is not split-projective in $GenT$ if and only if it belongs to $add\{\Phi_e(A_{\Sigma_T})/(A/AeA)\}$. One implication follows from the fact that the split-projective $A$-modules in $GenT$ are precisely given by $add\{A_{\Sigma_T}\}=add\{\Phi_e(A_{\Sigma_T})\}$ (see Theorem \ref{Thm TI}). For the other implication we additionally observe that there are no $A$-homomorphisms from $\Phi_e(A_{\Sigma_T})/(A/AeA)$ to $\Phi_e(A_{\Sigma_T})$ (see Corollary \ref{cortilt}) such that the $A$-module $\Phi_e(A_{\Sigma_T})/(A/AeA)$ cannot have any indecomposable direct summand which is split-projective in $GenT$.
\end{proof}

In particular, if $T$ is a (basic) tilting $A$-module, then $A_{\Sigma_T}$ is given by localising at the set of the non split-projective indecomposable direct summands of $T$ in $GenT$.

\begin{example}
Let $A$ be a finite dimensional basic and hereditary $\mathbb{K}$-algebra with a sink in the underlying quiver. Let $S$ be a simple and projective $A$-module which is not injective. We write $_AA$ as a direct sum $P\oplus S$ of projective $A$-modules. By $\tau$ we denote the usual \textit{Auslander-Reiten translation}. Then the $A$-module
$$T:=\tau^{-1} S\oplus P$$
is tilting, following \cite{APR}. $T$ is usually called an APR-tilting module. Using Proposition \ref{prop split}, we conclude that the associated universal localisation $A_{\Sigma_T}$ of $A$ is given by $A_{\{\tau^{-1}S\}}$. 
\end{example}

In the last part of this section we will discuss how the notion of mutation for support tilting modules or, more precisely, the induced partial order (see Section \ref{tilting}) translates to the set of universal localisations. Again, by $A$ we denote a finite dimensional and hereditary $\mathbb{K}$-algebra. It is not hard to see that the partial order on $s\mbox{-}tilt(A)$, given by inclusion of the associated torsion classes, is finer than the natural partial order on $fd\mbox{-}uniloc(A)$. Indeed, if $A_{\Sigma_1}$ and $A_{\Sigma_2}$ are finite dimensional universal localisations of $A$ with $A_{\Sigma_1}\le A_{\Sigma_2}$, then we have $\Xcal_{A_{\Sigma_1}}\subseteq\Xcal_{A_{\Sigma_2}}$ and, therefore, $Gen(\Xcal_{A_{\Sigma_1}})\subseteq Gen(\Xcal_{A_{\Sigma_2}})$, showing that for the associated support tilting modules $T_{\Sigma_1}$ and $T_{\Sigma_2}$ it follows $T_{\Sigma_1}\le T_{\Sigma_2}$. However, the following easy example illustrates that the converse, in general, does not hold true.

\begin{example}\label{exA2}
Consider the path algebra $A:=\mathbb{K}(1\rightarrow 2)$ and the two support tilting $A$-modules $T_1:=P_1\oplus S_1$ and $T_2:=S_1$, which are clearly mutations of each other. We have $T_2\le T_1$. But the associated universal localisations $A_{\Sigma_{T_1}}$ and $A_{\Sigma_{T_2}}$ are not related. Indeed, $A_{\Sigma_{T_1}}$ is the universal localisation of $A$ at $\{S_1\}$, where $A_{\Sigma_{T_2}}$ is the localisation at $\{P_2\}$. We can also compare the Hasse quivers for the different partial orders on $s\mbox{-}tilt(A)$ and $fd\mbox{-}uniloc(A)$. The first one is given by
$$\xymatrix{& P_1\oplus S_1\ar[r] & S_1\ar[dr] & \\ P_1\oplus P_2\ar[ur]\ar[dr] & & & 0 \\ & P_2\ar[urr] & &}$$
\noindent
In the Hasse quiver for the natural partial order on $fd\mbox{-}uniloc(A)$ the universal localisations of $A$ are indicated by the corresponding indecomposable $A_{\Sigma}$-trivial modules (see Section \ref{UNIFDA}).

$$\xymatrix{& \{S_1\}\ar[dr] & \\ \{0\}\ar[ur]\ar[r]\ar[dr] & \{P_1\}\ar[r] & \{P_1,P_2,S_1\}\\ & \{P_2\}\ar[ur] &}$$
\end{example}

\section{Nakayama algebras and universal localisations}\label{NAKUNI}
In this section we classify the universal localisations of a Nakayama algebra $A$ by certain subcategories of $A\mbox{-}mod$. More precisely, we show that the map $\omega$ discussed in Section \ref{UNIFDA} is bijective (see Question \ref{question}). Note that for Nakayama algebras all universal localisations are finite dimensional and all (relevant) subcategories of $A\mbox{-}mod$ have a finite generator, since $A$ is representation finite (see Proposition \ref{Nakayama}). In this section, $l(X)$ denotes the \textbf{Loewy length} of a finitely generated $A$-module $X$. We first recall the definition of a Nakayama algebra.
A finite dimensional $\mathbb{K}$-algebra $A$ is called \textbf{Nakayama} if every indecomposable projective $A$-module and every indecomposable injective $A$-module is uniserial. The following well-known result helps to understand the representation theory of $A$.

\begin{proposition}[\cite{ASS}, Theorem V.3.5]\label{Nakayama}
Let $A$ be a Nakayama algebra and $M$ an indecomposable $A$-module. Then it exists an indecomposable projective $A$-module $P$ and a positive integer $t$ with $1\le t\le l(P)$ such that $M\cong P/rad^tP$. In particular, $A$ is representation finite and every indecomposable $A$-module is uniserial.
\end{proposition}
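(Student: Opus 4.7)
The plan is to show that the indecomposable modules over a Nakayama algebra $A$ are exactly the modules $P/rad^tP$ with $P$ an indecomposable projective. One direction is essentially formal from the uniseriality of indecomposable projectives; the other direction (surjectivity of this construction) requires the genuinely new input.

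First I would handle the easy direction. If $P$ is indecomposable projective, then $P$ is uniserial by hypothesis, so its submodule lattice is the linear chain $P\supsetneq radP\supsetneq rad^2P\supsetneq\cdots\supsetneq rad^{l(P)}P=0$. Consequently the quotients of $P$ are precisely the modules $P/rad^tP$ for $1\le t\le l(P)$, and each is again uniserial (quotients of uniserial modules are uniserial). A uniserial module has a unique composition series, hence a simple top, and is therefore indecomposable. This yields the claimed list of indecomposable uniserial modules.

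Next, I would establish the converse: every indecomposable $M$ in $A\mbox{-}mod$ is of this form. Given such an $M$, consider the projective cover $\pi:P(M)\twoheadrightarrow M$. It suffices to show that $P(M)$ is indecomposable, since then uniseriality of $P(M)$ forces $ker(\pi)=rad^tP(M)$ for some $t$ (the submodules of $P(M)$ form a chain), yielding $M\cong P(M)/rad^tP(M)$. Now $P(M)$ is indecomposable precisely when $M/radM$ is simple, so the whole problem reduces to the key structural claim: \emph{every finitely generated indecomposable $A$-module has simple top}. This step is the main obstacle. The argument I would run is: assume $M/radM=S_1\oplus\cdots\oplus S_n$ with $n\ge 2$ and write $P(M)=P_1\oplus\cdots\oplus P_n$ with $P_i$ the indecomposable projective cover of $S_i$; the restrictions $\pi_i:P_i\to M$ have uniserial images $M_i$, and using both the uniseriality of the $P_i$ and the dual fact that indecomposable injectives are uniserial (so every $M$ also has simple socle, obtained by applying the same reasoning to the injective envelope $M\hookrightarrow E(M)$), one forces $M=M_1\oplus\cdots\oplus M_n$, contradicting indecomposability of $M$. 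This is delicate because one must simultaneously control morphisms between uniserial modules (which are determined by their image, itself a $rad^sP_j$) and exploit the minimality built into the projective cover; induction on $l(M)$ provides the cleanest bookkeeping.

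Finally, the two consequences are immediate: there are only finitely many pairs $(P,t)$ with $P$ indecomposable projective (finitely many up to isomorphism, since $A$ is finite dimensional) and $1\le t\le l(P)$, so $A$ is representation finite; and every indecomposable $M$ is uniserial because quotients of uniserial modules are uniserial, as already observed in the first step. The entire argument hinges on the key lemma that an indecomposable module over a Nakayama algebra has simple top, which should be regarded as the central structural input; once this is granted, the rest follows by direct manipulation of the chain of submodules of an indecomposable projective.
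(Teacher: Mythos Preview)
The paper does not give a proof of this proposition at all; it is quoted verbatim as Theorem~V.3.5 of \cite{ASS} and used as a black box. So there is no ``paper's own proof'' to compare your attempt against.

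That said, let me comment on the attempt itself. Your reduction is exactly right: once one knows that every indecomposable $M$ has simple top, the projective cover $P(M)$ is indecomposable, hence uniserial, and $M\cong P(M)/\mathrm{rad}^tP(M)$. The difficulty is entirely in that ``key lemma'', and your sketch of it has a genuine gap. You write that $M$ has simple socle ``by applying the same reasoning to the injective envelope $M\hookrightarrow E(M)$''; but the dual argument would require knowing that $E(M)$ is indecomposable, i.e.\ that $M$ has simple socle, which is precisely the dual of what you are trying to prove. As stated, the two claims lean on each other. Moreover, even granting that $\mathrm{soc}\,M$ is simple, the conclusion you draw (``$M=M_1\oplus\cdots\oplus M_n$'') is not what follows: rather, each uniserial $M_i$ contains the unique simple socle, hence embeds in the uniserial module $E(\mathrm{soc}\,M)$, so the $M_i$ form a chain and $M=\sum M_i$ equals the largest one. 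That gives $M$ uniserial and hence $n=1$, a contradiction---but not a direct-sum decomposition.

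The induction on $l(M)$ that you allude to does rescue the argument, but it has to be set up differently from your sketch. The standard route (as in \cite{ASS}) is: pick a simple $S\subseteq M$, apply the inductive hypothesis to $M/S$ to write it as $\bigoplus L_i$ with each $L_i$ uniserial, pull back to get non-split extensions $0\to S\to N_i\to L_i\to 0$; each $N_i$ has simple top, hence is a quotient of an indecomposable projective and therefore uniserial, and each has socle $S$, hence embeds in the uniserial module $E(S)$. The $N_i$ are then totally ordered by inclusion inside $E(S)$, forcing $t=1$ and $M=N_1$ uniserial. This is where the uniseriality of indecomposable injectives enters cleanly, without any circular appeal to $M$ itself having simple socle.
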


We want to realise Nakayama algebras as bound path algebras. Consider for $n\in\mathbb{N}$ the quivers
$$\Delta_n:=\xymatrix{1\ar[r] & 2\ar[r] & 3\ar[r] & ... \ar[r] & n}$$
$$\tilde{\Delta}_n:=\xymatrix{1\ar[r] & 2\ar[r] & 3\ar[r] & ... \ar[r] & n.\ar@/^1pc/[llll]}$$
\medskip

It is a well-known fact that a basic and connected $\mathbb{K}$-algebra $A$ is a Nakayama algebra if and only if $A$ is isomorphic to a quotient $\mathbb{K}Q_A/I$, where $Q_A$ is a quiver of the form $\Delta_n$ or $\tilde{\Delta}_n$ and $I$ is an admissible ideal of $\mathbb{K}Q_A$. Moreover, $A$ is a self-injective Nakayama algebra not isomorphic to the field if and only if $Q_A=\tilde{\Delta}_n$ and the admissible ideal $I$ is a power of the arrow ideal of $\mathbb{K}Q_A$ (see \cite{ASS}, Chapter V.3).

Later on, the following Nakayama algebras will play an important role 
$$A_n^h:=\mathbb{K}\Delta_n/R^h\hspace{2pc}\text{and}\hspace{2pc}\tilde{A}_n^h:=\mathbb{K}\tilde{\Delta}_n/R^h,$$
where $h\in\mathbb{N}_{>1}$ and $R$ denotes the arrow ideal of the associated path algebra.
The following lemma will be useful throughout.
\begin{lemma}[\cite{D}, Lemma 2.2.2]\label{Nakext}
Let $A$ be a Nakayama algebra and $X_1,X_2$ indecomposable $A$-modules. If 
$$\xymatrix{0\ar[r] & X_1\ar[r] & Y\ar[r] & X_2\ar[r] & 0}$$ 
is a non-split short exact sequence of $A$-modules, then $Y$ has at most two indecomposable direct summands $Y_1$ and $Y_2$ and the short exact sequence is of the form
$$\xymatrix{& & Y_1\ar@{>>}[dr] & & \\ 0\ar[r] & X_1\ar@{^{(}->}[ur]\ar@{>>}[dr] & & X_2\ar[r] & 0.\\ & & Y_2\ar@{^{(}->}[ur] & &}$$
\end{lemma}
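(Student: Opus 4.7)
The plan is to combine the uniseriality of indecomposable modules over a Nakayama algebra (Proposition \ref{Nakayama}) with the indecomposability of $X_1$ and $X_2$ to constrain the possible decompositions of $Y$. First, I would decompose $Y=Y_1\oplus\cdots\oplus Y_k$ into indecomposable summands, each uniserial by Proposition \ref{Nakayama}, and write the inclusion $X_1\hookrightarrow Y$ and the projection $Y\twoheadrightarrow X_2$ componentwise as $\iota_j:X_1\to Y_j$ and $\pi_j:Y_j\to X_2$.

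I would then show that $\iota_j\neq 0$ and $\pi_j\neq 0$ for every $j$. If some $\iota_j$ vanishes, then $X_1$ embeds into $\bigoplus_{i\neq j}Y_i$, so $Y/X_1\cong X_2$ contains $Y_j$ as a direct summand; indecomposability of $X_2$ forces either $Y_j=0$ or $X_1=\bigoplus_{i\neq j}Y_i$, and in the latter case $\pi|_{Y_j}:Y_j\to X_2$ is an isomorphism, splitting the sequence. Dually, if $\pi_j=0$, then $Y_j\subseteq\ker(\pi)=X_1$ is a direct summand of $Y$, and indecomposability of $X_1$ forces $Y_j=X_1$, again splitting the sequence.

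The main step is to bound $k$. Realising $A$ as $\Kbb Q_A/I$ with $Q_A=\Delta_n$ or $\tilde{\Delta}_n$, every uniserial indecomposable can be labelled by an interval $M[c,d]$ on (the universal cover of) $Q_A$ with top $S_c$ and socle $S_d$, and a direct calculation shows that $\Hom_A(M[a,b],M[c,d])\neq 0$ precisely when $c\le a\le d\le b$, with image equal to the quotient $M[a,d]$ of the source sitting as a submodule of the target. Setting $X_1=M[a,b]$, $X_2=M[p,q]$, $Y_j=M[c_j,d_j]$, the non-vanishing of $\iota_j$ and $\pi_j$ then translates into $p\le c_j\le\min(a,q)$ and $\max(a,q)\le d_j\le b$. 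Injectivity of $\iota$ additionally requires some $j$ with $d_j=b$ (``right-maximal'' summand), and surjectivity of $\pi$ requires some $j$ with $c_j=p$ (``left-maximal'' summand). Combining these lower bounds with the dimension identity $\sum_j(d_j-c_j+1)=(b-a+1)+(q-p+1)$ forces $k\le 2$, and a short case analysis shows that in the two-summand case the only option is $Y_1=M[p,b]$ and $Y_2=M[a,q]$. These two modules then fit into the diagram of the lemma: $X_1=M[a,b]$ embeds into $M[p,b]=Y_1$ and surjects onto $M[a,q]=Y_2$, while $Y_1$ surjects onto $M[p,q]=X_2$ and $Y_2$ embeds into $X_2$.

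The main obstacle is this last combinatorial bookkeeping, especially in the cyclic case $Q_A=\tilde{\Delta}_n$, where intervals may wrap around the quiver and one has to pass to the universal cover to apply the arithmetic above. A secondary subtlety is the degenerate case in which a single summand is simultaneously right- and left-maximal, corresponding to $Y$ being itself uniserial (the case $k=1$), which still fits the form displayed in the lemma with $Y_2=0$.
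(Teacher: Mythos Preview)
The paper does not supply its own proof of this lemma; it is quoted verbatim from Dichev's thesis (\cite{D}, Lemma~2.2.2) and used as a black box throughout Section~\ref{NAKUNI}. So there is nothing to compare against, and your task is simply to produce a correct argument.

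Your plan is sound and is essentially the standard way to prove this. The reduction showing every $\iota_j$ and $\pi_j$ is nonzero is correct (for the $\pi_j=0$ case you implicitly use the modular law to see that $Y_j$, being a summand of $Y$ contained in $X_1$, is already a summand of $X_1$). The interval description of homomorphisms between uniserials is accurate, and the dimension identity together with the bounds $p\le c_j\le\min(a,q)$, $\max(a,q)\le d_j\le b$, plus the existence of a right-maximal and a left-maximal summand, does force $k\le 2$ by a short inequality (in fact $k(|a-q|+1)\le 2$ when $a\ge q$, and $(k-2)(q-a+1)\le 0$ when $a<q$).

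One point to make explicit in your ``short case analysis'': when $k=2$ and the left-maximal and right-maximal conditions are realised by \emph{different} summands, the arithmetic gives $Y_1\cong M[p,q]=X_2$ and $Y_2\cong M[a,b]=X_1$. This does not immediately contradict anything, since a middle term isomorphic to $X_1\oplus X_2$ need not come from the split extension. Here you must argue: if $\iota_2\colon X_1\to Y_2\cong X_1$ is invertible the sequence splits; if not, injectivity of $\iota$ forces $\iota_1$ to be injective on $\mathrm{soc}(X_1)=S_b$, which gives $q=b$ and collapses this sub-case into the one where a single summand is both left- and right-maximal. Once this is handled, your identification $Y_1=M[p,b]$, $Y_2=M[a,q]$ goes through and yields the displayed diagram. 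Your remarks on the cyclic case (lift to the $\mathbb{Z}$-cover) and the degenerate $k=1$ case are both to the point.
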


Next, we want to understand universal localisations of Nakayama algebras.

\begin{lemma}\label{lem Nak1}
Let $A$ be a Nakayama algebra and $A_{\Sigma}$ be a universal localisation of $A$.
Then also $A_{\Sigma}$ is a Nakayama algebra.
\end{lemma}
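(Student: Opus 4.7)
The plan is to combine two facts already in the paper. First, since $A$ is Nakayama it is representation finite (Proposition \ref{Nakayama}), so Lemma \ref{GdP87} immediately gives that $A_\Sigma$ is finite dimensional and again representation finite; in particular, $A_\Sigma$ is an honest object of $A\mbox{-}mod$, and it makes sense to talk about indecomposable $A_\Sigma$-modules and their submodule lattices. Second, the ring epimorphism $A\to A_\Sigma$ induces a fully faithful restriction functor $A_\Sigma\mbox{-}mod\hookrightarrow A\mbox{-}mod$ (Proposition \ref{ring epi}), with essential image $\Xcal_{A_\Sigma}$. The strategy is to transfer uniseriality across this embedding.

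More precisely, I would show that every indecomposable $A_\Sigma$-module $M$ is uniserial (as an $A_\Sigma$-module); the Nakayama property for $A_\Sigma$ then follows by specialising to indecomposable projectives and injectives. So take an indecomposable $M$ in $A_\Sigma\mbox{-}mod$. Because restriction is fully faithful, $\End_{A_\Sigma}(M)\cong\End_A(M)$, so $M$ is also indecomposable when viewed as an $A$-module. By Proposition \ref{Nakayama}, every indecomposable $A$-module is uniserial, so the $A$-submodule lattice of $M$ is a chain. Now any $A_\Sigma$-submodule of $M$ is automatically an $A$-submodule (since $A$ acts through $A_\Sigma$), hence the $A_\Sigma$-submodule lattice embeds into a chain and is itself a chain. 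Thus $M$ is uniserial over $A_\Sigma$.

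Applying this to the indecomposable direct summands of ${}_{A_\Sigma}\!A_\Sigma$ and to the indecomposable injective $A_\Sigma$-modules shows that $A_\Sigma$ satisfies the definition of a Nakayama algebra. I do not expect any serious obstacle: the argument is essentially a formal consequence of the bireflective embedding $\Xcal_{A_\Sigma}\hookrightarrow A\mbox{-}mod$, together with Proposition \ref{Nakayama} and Lemma \ref{GdP87}. The only point that deserves a moment's care is verifying that submodule lattices shrink (rather than grow) under the embedding, which is immediate because an $A_\Sigma$-stable subset of $M$ is automatically $A$-stable.
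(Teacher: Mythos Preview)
Your argument is correct and is essentially the same as the paper's: both use Proposition~\ref{Nakayama} together with Lemma~\ref{GdP87} to get $A_\Sigma$ finite dimensional, then use the fully faithful restriction to see that an indecomposable $A_\Sigma$-module stays indecomposable over $A$, hence is uniserial over $A$, and therefore over $A_\Sigma$ since $A_\Sigma$-submodules are in particular $A$-submodules. The only cosmetic difference is that you verify uniseriality for \emph{every} indecomposable $A_\Sigma$-module before specialising, whereas the paper goes straight to indecomposable projective and injective $A_\Sigma$-modules.
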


\begin{proof}
First of all, by Proposition \ref{Nakayama} and Lemma \ref{GdP87}, $A_{\Sigma}$ is a finite dimensional and representation finite $\mathbb{K}$-algebra. Now let $X$ be an indecomposable projective or injective $A_{\Sigma}$-module. Since, via restriction, $_AX$ is an indecomposable (not necessarily projective or injective) $A$-module, it is uniserial by Proposition \ref{Nakayama}. Consequently, $X$ is uniserial as an $A_{\Sigma}$-module. 
\end{proof}

\begin{remark}
In general, a universal localisation $A_{\Sigma}$ of a basic and connected Nakayama algebra $A$ is neither basic nor connected.
\end{remark}

\begin{example}\label{Ex Nak}
Consider the Nakayama algebra $A:=A_3^2$ and the universal localisation at $\Sigma:=\{S_2\}$, which one obtains by inverting the arrow $2\rightarrow 3$ in the quiver $\Delta_3$. The $A$-module $_AA_{\Sigma}$ is five-dimensional of the form $P_2^{\oplus 2}\oplus S_1$ and the algebra $A_{\Sigma}$ is Morita-equivalent to $\mathbb{K}\times\mathbb{K}$. In particular, $A_{\Sigma}$ is neither basic nor connected.
\end{example}

In order to classify universal localisations for Nakayama algebras we use some of the methods developed in Section \ref{UNIFDA}. By Proposition \ref{prop uniloc}, we know that a universal localisation $A_{\Sigma}$ of an algebra $A$ is determined by the indecomposable $A$-modules in $^*\!\Xcal_{A_{\Sigma}}$. In the Nakayama case we can be more precise. We will consider a minimal and explicitly given set of indecomposable $A_\Sigma$-trivial modules which determines the localisation. Let $n$ be the number of non-isomorphic simple $A$-modules, $i\in\{1,...,n\}$ and $P_i$ be the corresponding indecomposable projective $A$-module. Then we define $X_i^{\Sigma}$ to be $P_i/rad^{t_i}P_i$ for $t_i\ge 0$ minimal such that $P_i/rad^{t_i}P_i$ lies in $\, ^*\!\!\Xcal_{A_{\Sigma}}$, whenever such $t_i$ exists. By convention, we define $P_i/rad^0P_i$ to be $P_i$. The set of all the indecomposable $X_i^{\Sigma}$ is denoted by $W_{\Sigma}$.

\begin{lemma}\label{lemNak}
Let $A$ be a Nakayama algebra and $A_{\Sigma}$ be a universal localisation of $A$. Then $A_{\Sigma}$ is uniquely determined by the set $W_{\Sigma}$, i.e., $A_{\Sigma}$ and $A_{W_{\Sigma}}$ lie in the same epiclass of $A$.
\end{lemma}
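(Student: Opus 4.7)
The plan is to show $^*\!\Xcal_{A_\Sigma}=\,^*\!\Xcal_{A_{W_\Sigma}}$; by Proposition \ref{prop uniloc}(3) and the discussion immediately following it this is equivalent to $\Xcal_{A_\Sigma}=\Xcal_{A_{W_\Sigma}}$, and hence to $A_\Sigma$ and $A_{W_\Sigma}$ lying in the same epiclass of $A$. One inclusion is immediate: $W_\Sigma\subseteq\,^*\!\Xcal_{A_\Sigma}$ by construction, so the universal property of $A_{W_\Sigma}$ produces a factorisation $A\to A_{W_\Sigma}\to A_\Sigma$; this gives $\Xcal_{A_\Sigma}\subseteq\Xcal_{A_{W_\Sigma}}$ by composition of restriction functors, which via the partial-order dictionary following Proposition \ref{prop uniloc} translates to $^*\!\Xcal_{A_{W_\Sigma}}\subseteq\,^*\!\Xcal_{A_\Sigma}$.

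For the reverse inclusion, since $^*\!\Xcal_{A_{W_\Sigma}}$ is closed under direct sums and summands (Proposition \ref{prop uniloc}(2)), it is enough to place each indecomposable $M\in\,^*\!\Xcal_{A_\Sigma}$ inside $^*\!\Xcal_{A_{W_\Sigma}}$. I would argue by strong induction on the Loewy length $l(M)$. Using Proposition \ref{Nakayama}, write $M\cong P_i/rad^s P_i$ with $s=l(M)\ge 1$; the minimality of $t_i$ gives $0\le t_i\le s$, and if $t_i=s$ then $M=X_i^\Sigma\in W_\Sigma$ at once.

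The principal case is $1\le t_i<s$. Consider the short exact sequence
$$0\to N\to P_i/rad^s P_i\to X_i^\Sigma\to 0,$$
where $N:=rad^{t_i}P_i/rad^s P_i$. By uniseriality (Proposition \ref{Nakayama}), if $S_{j'}$ denotes the top of $rad^{t_i}P_i$ then $rad^{t_i}P_i\cong P_{j'}/rad^{l(P_i)-t_i}P_{j'}$, and under this identification $N\cong P_{j'}/rad^{s-t_i}P_{j'}$, so $l(N)=s-t_i<s$. The crucial claim is $N\in\,^*\!\Xcal_{A_\Sigma}$. To prove it, I would lift the inclusion $rad^s P_i\hookrightarrow rad^{t_i}P_i$ along the surjection $\sigma_0^{X_i^\Sigma}:P_{j'}\twoheadrightarrow rad^{t_i}P_i$ to a map $c:P_j\to P_{j'}$ satisfying $\sigma_0^M=\sigma_0^{X_i^\Sigma}\circ c$; an elementary calculation (using that $P_{j'}$ is uniserial together with Nakayama's lemma) forces the image of $c$ to be precisely $rad^{s-t_i}P_{j'}$, whence $c$ coincides with $\sigma_0^N$ up to isomorphism in the projective category. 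As $A_\Sigma\otimes\sigma_0^M$ and $A_\Sigma\otimes\sigma_0^{X_i^\Sigma}$ are isomorphisms, so is $A_\Sigma\otimes c$, and Proposition \ref{prop uniloc}(1) yields $N\in\,^*\!\Xcal_{A_\Sigma}$. The induction hypothesis then places $N$ in $^*\!\Xcal_{A_{W_\Sigma}}$, and extension closure (Proposition \ref{prop uniloc}(2)) applied to the sequence above gives $M\in\,^*\!\Xcal_{A_{W_\Sigma}}$.

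The residual case $t_i=0$ with $s<l(P_i)$ (so $X_i^\Sigma=P_i\in W_\Sigma$ while $M$ is a proper quotient) must be handled separately but is immediate: writing $\sigma_0^M:P_j\to P_i$ with $P_j$ the projective cover of $rad^s P_i$, the combination of $A_\Sigma\otimes_A P_i=0$ and $A_\Sigma\otimes\sigma_0^M$ an isomorphism forces $A_\Sigma\otimes_A P_j=0$, hence $t_j=0$, $P_j\in W_\Sigma$, and therefore $A_{W_\Sigma}\otimes\sigma_0^M$ is the trivial isomorphism $0\to 0$. The main obstacle will be justifying the identification of $c$ with $\sigma_0^N$ in the principal case; this rests on the uniserial structure of projective $A$-modules imposed by the Nakayama hypothesis and on a careful tracking of the radical filtration of $P_{j'}$ under the isomorphism $rad^{t_i}P_i\cong P_{j'}/rad^{l(P_i)-t_i}P_{j'}$.
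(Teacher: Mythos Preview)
Your argument is correct and follows essentially the same route as the paper's proof: both reduce to showing ${}^*\!\Xcal_{A_\Sigma}={}^*\!\Xcal_{A_{W_\Sigma}}$, both handle the case where $P_0^M$ itself lies in $W_\Sigma$ separately, and both establish the key step $N=\ker(M\twoheadrightarrow X_i^\Sigma)\in{}^*\!\Xcal_{A_\Sigma}$ via the factorisation $\sigma_0^M=\sigma_0^{X_i^\Sigma}\circ c$ together with induction on Loewy length and extension closure. The paper simply asserts this last step is ``easy to check, by comparing minimal projective presentations,'' whereas you spell out the identification of $c$ with $\sigma_0^N$; your uniseriality justification is sound (the image of $c$ is some $rad^kP_{j'}$, and the equation $\sigma_0^{X_i^\Sigma}(rad^kP_{j'})=rad^{k+t_i}P_i=rad^sP_i\neq 0$ forces $k=s-t_i$). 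One triviality: the case $M=P_i$ projective (so $s=l(P_i)$ and $t_i=0$) slips between your stated cases, but then $M=X_i^\Sigma\in W_\Sigma$ immediately.
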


\begin{proof}
We have to show that $^*\!\Xcal_{A_{\Sigma}}$ equals $^*\!\Xcal_{A_{W_\Sigma}}$. However, one of the inclusions is immediate. Thus, let $X$ be an indecomposable $A$-module in $^*\!\Xcal_{A_{\Sigma}}$ and take the minimal projective presentation $P_1^X\rightarrow P_0^X$ of $X$ in $A\mbox{-}mod$. We can assume that $X$ is not projective (otherwise it would already belong to $W_{\Sigma}$). Then either $P_0^X$ belongs to $W_{\Sigma}$ or there is $j\in\{1,...,n\}$ such that $X$ surjects onto $X_{j}^{\Sigma}=P_{j}/rad^{t_j}P_{j}$ for some $t_{j}\ge 1$. In the first case, since $X$ belongs to $^*\!\Xcal_{A_{\Sigma}}$, also $P_1^X$ has to be in $W_{\Sigma}$. Consequently, by definition, $X$ belongs to $^*\!\Xcal_{A_{W_\Sigma}}$. In the second case, we get a short exact sequence of the form
$$\xymatrix{0\ar[r] & ker(\pi)\ar[r] & X\ar[r]^{\pi} & X_{j}^{\Sigma}\ar[r] & 0}$$
and it is easy to check, by comparing minimal projective presentations, that also $ker(\pi)$ lies in $^*\!\Xcal_{A_{\Sigma}}$. By Proposition \ref{prop uniloc}(2), $^*\!\Xcal_{A_{W_\Sigma}}$ is closed under extensions and, therefore, $X$ lies in $^*\!\Xcal_{A_{W_\Sigma}}$ if and only if $ker(\pi)$ belongs to $^*\!\Xcal_{A_{W_\Sigma}}$. But now we can repeat the whole argument with $ker(\pi)$ instead of $X$ and, since the length of $ker(\pi)$ is smaller than the length of $X$, we are done after finitely many steps.
\end{proof}

Note that a non-projective $A$-module $X_i^{\Sigma}$ in $W_{\Sigma}$ represents the "shortest" non-trivial morphism, from an indecomposable projective $A$-module $P_j$ to the module $P_i$, which becomes invertible after tensoring with $A_{\Sigma}$. By "shortest" we mean a minimal number of factorisations through other indecomposable $A$-modules.
The classification of the universal localisations of $A$ will work via the notion of orthogonal collections. We call a set of $A$-modules $\{X_1,...,X_s\}$ an \textbf{orthogonal collection}, if every $X_i$ is indecomposable, $End_A(X_i)\cong\mathbb{K}$ for all $i$ and $Hom_A(X_i,X_j)=0$ for all $i\not= j$. Since $A$ is a Nakayama algebra, we clearly have that $s\le n$ and that $s=n$ if and only if all $X_i$ are simple $A$-modules. The following proposition can be deduced from \cite{D}.

\begin{proposition}[\cite{D}, Proposition 2.2.8, Theorem 2.2.10, \S 2.6]\label{Prop Nak}
Let $A$ be a Nakayama algebra. There is a bijection between the wide subcategories and the isomorphism classes of orthogonal collections in $A\mbox{-}mod$ by mapping a wide subcategory $\Ccal$ to the set of $\Ccal$-simple $A$-modules.
\end{proposition}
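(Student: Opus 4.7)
The plan is to analyse the forward map $\Ccal\mapsto\mathcal{S}(\Ccal):=\{\Ccal\text{-simple modules}\}$ and construct its inverse as extension-closure. I would organise this as three steps: well-definedness of the forward map, construction and verification of the inverse, and checking the two maps compose to the identity.

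First, I would verify that $\mathcal{S}(\Ccal)$ is an orthogonal collection. A $\Ccal$-simple module is by definition indecomposable in the abelian category $\Ccal$. Any nonzero morphism $\varphi:S_i\to S_j$ between $\Ccal$-simples has kernel $0$ and image equal to $S_j$ by $\Ccal$-simplicity (using that $\Ccal$ is exact abelian, so $\ker\varphi$ and $\Img\varphi$ belong to $\Ccal$), hence $\varphi$ is an isomorphism. Thus $\Hom_A(S_i,S_j)=0$ for $i\neq j$, and $\End_A(S_i)$ is a finite-dimensional division algebra over the algebraically closed field $\mathbb{K}$, so $\End_A(S_i)\cong\mathbb{K}$.

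Second, given an orthogonal collection $\{X_1,\ldots,X_s\}$, I would set $\Ccal:=\mathrm{filt}(X_1,\ldots,X_s)$, the extension closure in $A\mbox{-}\fdmod$. Extension-closure is built in, so the remaining task is to show $\Ccal$ is closed under kernels and cokernels in $A\mbox{-}\fdmod$. This is the main obstacle. The key tool is Lemma \ref{Nakext}: any non-split short exact sequence of indecomposables over a Nakayama algebra takes the uniserial amalgam shape. Proceeding by induction on the total filtration length of the source and target of a morphism $\varphi:M\to N$ in $\Ccal$, one reduces via short exact sequences $0\to X_i\to M\to M'\to 0$ (with $X_i$ a $\Ccal$-simple submodule) and uses the snake lemma. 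The base case reduces to analysing a morphism $\varphi:X_i\to Y$ where $Y$ is filtered by the $X_j$'s: since $X_i$ is a brick orthogonal to every $X_j$ with $j\neq i$, the composition of $\varphi$ with any quotient $Y\twoheadrightarrow X_j$ ($j\neq i$) vanishes; iterating and invoking Lemma \ref{Nakext} to control how filtration factors can glue in a uniserial module shows that the image of $\varphi$ lies in an $X_i$-filtered submodule of $Y$, and $\ker\varphi$ is either $0$ or $X_i$. From this the general inductive step gives that both $\ker\varphi$ and $\mathrm{coker}\,\varphi$ carry filtrations by the $X_j$'s, establishing $\Ccal\in\mathrm{wide}(A)$. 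A direct verification then shows the $\Ccal$-simples are precisely $\{X_1,\ldots,X_s\}$: each $X_i$ is $\Ccal$-simple because any proper submodule in $\Ccal$ would have to admit a nonzero map to some $X_j$, impossible by orthogonality and $\End_A(X_i)\cong\mathbb{K}$, and conversely every $\Ccal$-simple receives a surjection from some $X_i$ in its filtration, forcing it to be isomorphic to that $X_i$.

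Finally, I would check that the compositions are identities. In one direction, $\mathcal{S}(\mathrm{filt}(X_1,\ldots,X_s))=\{X_1,\ldots,X_s\}$ by the last sentence of the previous paragraph. In the other direction, given a wide subcategory $\Ccal$, I must show $\Ccal=\mathrm{filt}(\mathcal{S}(\Ccal))$. Since $A$ is representation finite (Proposition \ref{Nakayama}), every object of $\Ccal\subseteq A\mbox{-}\fdmod$ has finite length in $A\mbox{-}\fdmod$, hence a fortiori finite length inside the abelian category $\Ccal$; a Jordan--H\"older composition series in $\Ccal$ then exhibits every object of $\Ccal$ as an iterated extension of $\Ccal$-simples, yielding the required equality. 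Note that this argument also shows the collection $\mathcal{S}(\Ccal)$ is finite, so no cardinality issue arises. The main place where Nakayama-specific input (Lemma \ref{Nakext}) is required is the closure of $\mathrm{filt}(X_1,\ldots,X_s)$ under kernels and cokernels, which is precisely what fails for arbitrary finite-dimensional algebras and is what makes the proposition nontrivial.
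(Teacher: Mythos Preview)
The paper does not give its own proof of this proposition; it is simply cited from Dichev's thesis \cite{D}. Your outline is a correct reconstruction of the standard argument, and the three-step structure (well-definedness of $\mathcal{S}$, wideness of $\mathrm{filt}$, mutual inversity) is exactly right.

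There is, however, a misconception in your final paragraph. The closure of $\mathrm{filt}(X_1,\ldots,X_s)$ under kernels and cokernels does \emph{not} fail for arbitrary finite-dimensional algebras: the bijection between wide subcategories and semibricks (which is what the paper calls orthogonal collections, up to finiteness) holds for any finite-dimensional algebra and goes back to Ringel. Your base-case argument already works without Lemma~\ref{Nakext}. Given a nonzero $\varphi:X_i\to Y$ with $Y$ filtered, choose a short exact sequence $0\to Y'\to Y\to X_j\to 0$ with $Y'$ of shorter filtration length. If $j\neq i$ the composite $X_i\to X_j$ vanishes by orthogonality and $\varphi$ factors through $Y'$; if $j=i$ the composite is either zero (same conclusion) or an isomorphism since $X_i$ is a brick, in which case $\varphi$ is a split monomorphism with cokernel $Y'$. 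Induction on the filtration length finishes the base case, and the snake lemma handles the inductive step exactly as you describe. No uniseriality is needed.

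The genuine role of the Nakayama hypothesis here is only to guarantee that $\mathcal{S}(\Ccal)$ is a \emph{finite} set, as required by the paper's definition of orthogonal collection; this follows immediately from representation-finiteness (Proposition~\ref{Nakayama}), as you note. For a general algebra the set of $\Ccal$-simples can be infinite (e.g.\ the regular modules over the Kronecker algebra), so the bijection is with possibly infinite semibricks rather than with finite orthogonal collections.
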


Now we are able to state the main result of this section.

\begin{theorem} \label{Main Nak}
Let $A$ be a Nakayama algebra. There is a bijection between the universal localisations of $A$ (up to epiclasses) and the isomorphism classes of orthogonal collections in $A\mbox{-}mod$.
\end{theorem}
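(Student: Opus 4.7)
The plan is to establish the bijection via the assignment $A_\Sigma \mapsto W_\Sigma$ introduced before Lemma \ref{lemNak}, with inverse sending an orthogonal collection $\mathcal{S}$ to the universal localisation $A_\mathcal{S}$ at $\mathcal{S}$. Injectivity up to epiclasses of the forward map is precisely the content of Lemma \ref{lemNak}, so the work reduces to two steps: proving that $W_\Sigma$ is always an orthogonal collection, and proving that $W_{A_\mathcal{S}} = \mathcal{S}$ for every orthogonal collection $\mathcal{S}$.

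For the first step, each $X_i^\Sigma = P_i/rad^{t_i}P_i$ is a quotient of the uniserial projective $P_i$ (Proposition \ref{Nakayama}), hence uniserial and indecomposable. Both $End_A(X_i^\Sigma) \cong \mathbb{K}$ and $Hom_A(X_i^\Sigma, X_j^\Sigma) = 0$ for $i \neq j$ should be extracted from the minimality of the exponents $t_i$. A nonzero morphism $\phi : X_i^\Sigma \to X_j^\Sigma$ between uniserial modules with distinct tops necessarily has proper image in $X_j^\Sigma$, identifying it with a submodule of the form $rad^k P_j / rad^{t_j} P_j$ for some $k \geq 1$. The resulting short exact sequences, analysed via Lemma \ref{Nakext} and the closure properties of $^*\!\Xcal_{A_\Sigma}$ from Proposition \ref{prop uniloc}(2), should force a strictly shorter quotient $P_j/rad^{t'}P_j$ with $t' < t_j$ to lie in $^*\!\Xcal_{A_\Sigma}$, contradicting the minimality of $t_j$. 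A parallel argument applied to a putative non-scalar endomorphism of $X_i^\Sigma$ yields $End_A(X_i^\Sigma) \cong \mathbb{K}$.

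For the second step, start with an orthogonal collection $\mathcal{S} = \{Y_1, \ldots, Y_s\}$. The orthogonality condition forces the tops of the $Y_j$ to be pairwise distinct, since two indecomposable uniserial $A$-modules sharing a top $S_i$ are both quotients of $P_i$ and the longer one surjects onto the shorter, contradicting $Hom_A(Y_j, Y_k) = 0$. Writing $Y_j = P_{i_j}/rad^{t_{i_j}}P_{i_j}$ with the $i_j$ pairwise distinct, each $Y_j$ lies in $^*\!\Xcal_{A_\mathcal{S}}$ by construction. I would then verify the minimality of each $t_{i_j}$ in the definition of $X_{i_j}^{A_\mathcal{S}}$, giving $X_{i_j}^{A_\mathcal{S}} = Y_j$, as well as the non-existence of $X_i^{A_\mathcal{S}}$ for $i \notin \{i_1,\ldots,i_s\}$; both facts follow by invoking the orthogonal collection property of $W_{A_\mathcal{S}}$ established in the first step, since any extra or shorter member would enlarge $\mathcal{S}$ to a strictly larger orthogonal collection.

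The main obstacle is the $Hom$-orthogonality in the first step. This is where the Nakayama structure is used essentially: Lemma \ref{Nakext} controls short exact sequences between indecomposables, and the closure of $^*\!\Xcal_{A_\Sigma}$ under cokernels of injective maps whose cokernel has projective dimension at most one (Proposition \ref{prop uniloc}(2)) is what permits descent from the proper image of a nonzero morphism to a strictly shorter quotient of $P_j$, contradicting the minimality defining $W_\Sigma$. Verifying that the projective dimension hypothesis applies in each relevant case, or circumventing it by a direct uniserial analysis, is the delicate technical point.
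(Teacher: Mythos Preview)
Your proposed bijection $A_\Sigma \mapsto W_\Sigma$ does not work: $W_\Sigma$ is not, in general, an orthogonal collection. The failure already occurs for the endomorphism condition on projective members. Take $A=\tilde{A}_2^3$ and $\Sigma=\{P_1\}$, so that $A_\Sigma=A/Ae_1A$ and $\Xcal_{A_\Sigma}=\mathrm{add}\,S_2$. One checks that no proper quotient of $P_1$ lies in ${}^*\Xcal_{A_\Sigma}$ (for instance $Ext^1_A(S_1,S_2)\neq 0$), so $X_1^\Sigma=P_1$ and $W_\Sigma=\{P_1\}$. But $P_1$ has Loewy length $3>n=2$, hence $End_A(P_1)\cong\mathbb{K}[x]/(x^2)\not\cong\mathbb{K}$. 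More drastically, for the zero localisation (e.g.\ $\Sigma=\{P_1,P_2\}$) one gets $W_\Sigma=\{P_1,P_2\}$ with $Hom_A(P_1,P_2)\neq 0$. Your minimality argument cannot repair this: when $t_i=0$ there is no strictly shorter quotient to descend to, and the closure of ${}^*\Xcal_{A_\Sigma}$ under cokernels of suitable injections gives no leverage on a projective module. The difficulty is intrinsic to the self-injective cyclic case $\tilde{A}_n^h$ with $h>n$.

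The paper's route is genuinely different from yours. It first isolates structural properties (1)--(5) of $W_\Sigma$ (arcs that are separated, consecutive, or properly nested), then \emph{modifies} $W_\Sigma$ into a set $\tilde{W}_\Sigma$ by collapsing maximal chains of consecutive arcs into a single long arc plus intermediate projectives, and finally applies a length-shift $\Phi$ on $A\mbox{-}ind$ sending each projective to its simple top and each non-projective $P/rad^tP$ to $P/rad^{t+1}P$. It is $\Phi(\tilde{W}_\Sigma)$, not $W_\Sigma$, that is shown to be an orthogonal collection; the shift $\Phi$ is exactly what converts projective members (possible bricks-with-self-extensions) into simples, and what turns the arc conditions (2$'$)--(4$'$) into $Hom$-orthogonality. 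Your second step is also problematic as written: the sentence ``any extra or shorter member would enlarge $\mathcal{S}$ to a strictly larger orthogonal collection'' does not yield a contradiction, since orthogonal collections are not maximal objects.
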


\begin{proof}
Let $A_{\Sigma}$ be a universal localisation of $A$. By Lemma \ref{lemNak}, $A_{\Sigma}$ is uniquely determined by the set $W_{\Sigma}$. The general idea of the proof is to deform $W_{\Sigma}$ uniquely into an orthogonal collection of $A$-modules and to show that every orthogonal collection of $A$-modules occurs in this way.
In a first step, we list and prove five important properties of $W_{\Sigma}$:
\begin{enumerate}
\item For all non-projective $X$ in $W_{\Sigma}$, we have $l(X)\le n-1$.
\item Composing minimal projective presentations of modules in $W_{\Sigma}$ can never yield an endomorphism.
\item For all non-projective $X$ in $W_{\Sigma}$, the minimal projective presentation $\sigma_0^X$ of $X$ does not factor through any projective $A$-module in $W_{\Sigma}$.
\item The minimal projective presentations of two different non-projective modules in $W_{\Sigma}$ cannot have the same domain.
\item The minimal projective presentation of a non-projective $A$-module in $W_{\Sigma}$ factors properly through the projective cover $P_0^X$ of a non-projective $A$-module $X$ in $W_{\Sigma}$ if and only if it factors through $P_1^X$.
\end{enumerate}
ad(1): Suppose that $l(X)\ge n$. Consequently, the minimal projective presentation of $X$ 
$$\sigma_0^{X}:P_1^{X}\rightarrow P_0^{X}$$ factors through a non-trivial endomorphism $\alpha$ of $P_1^{X}$ (respectively, through a non-trivial endomorphism of $P_0^{X}$). Since $X$ belongs to $^*\!\Xcal_{A_{\Sigma}}$, the morphism $\sigma_0^{X}$ becomes invertible after tensoring with $A_{\Sigma}$ and, thus, also does $\alpha$. Note that $A_{\Sigma}\otimes_A\alpha$ is not zero by assumption. This leads to a contradiction, since $A$ is a finite dimensional algebra and, therefore, all non-trivial endomorphisms of indecomposable projective $A$-modules must be nilpotent.
ad(2): Can be checked using the arguments in the proof of (1).  
ad(3): Suppose the map $\sigma_0^{X}$ factors through some $P_j$ in $W_{\Sigma}$. Since $P_j$ is getting annihilated by tensoring with $A_{\Sigma}$ while the map $\sigma_0^{X}$ becomes invertible at the same time, it follows that also the projective cover $P_0^X$ of $X$ gets annihilated and, thus, $P_0^X$ must belong to $^*\!\Xcal_{A_{\Sigma}}$. Therefore, by the definition of $W_{\Sigma}$, $X$ is projective, contradicting our assumption.
ad(4): Suppose that the negation of (4) holds for two non-projective $A$-modules $X_1$ and $X_2$ in $W_{\Sigma}$. Hence, without loss of generality, we can assume that the minimal projective presentation of $X_1$
$$\sigma_0^{X_1}:P_1^{X_1}\rightarrow P_0^{X_1}$$ 
factors non-trivially through $P_0^{X_2}$, the projective cover of $X_2$. Since $X_1$ and $X_2$ belong to $^*\!\Xcal_{A_{\Sigma}}$, also the induced map from $P_0^{X_2}$ to $P_0^{X_1}$ becomes invertible under the action of $A_{\Sigma}\otimes_A-$, contradicting the minimality of $X_1$ in the definition of $W_{\Sigma}$. 
ad(5): Let $X_1$ and $X_2$ be two non-projective $A$-modules in $W_{\Sigma}$ such that the minimal projective presentation of $X_1$ factors properly through $P_0^{X_2}$, the projective cover of $X_2$. Now suppose that condition (5) is not fulfilled. Thus, we get the following commutative diagram of $A$-modules
$$\xymatrix{ & P_1^{X_1}\ar[rr]^{\sigma_0^{X_1}}\ar[dr]^{f_2} & & P_0^{X_1}\\ P_1^{X_2}\ar[rr]^{\sigma_0^{X_2}}\ar[ur]^{f_1} & & P_0^{X_2}\ar[ur]_{f_3} &}$$
Since $X_1$ and $X_2$ belong to $^*\!\Xcal_{A_{\Sigma}}$, also the cokernels of the $f_i$ lie in $^*\!\Xcal_{A_{\Sigma}}$, again contradicting the minimality of $X_1$ and $X_2$ in the definition of $W_{\Sigma}$. Since the argument is symmetric, the reverse implication follows. 

In explicit terms, the above properties guarantee that two minimal projective presentations represented by non-projective $A$-modules in $W_{\Sigma}$, when seen as arcs on a line or on a circle, respectively, are either completely separated, consecutive or they cover each other properly. The projective $A$-modules in $W_{\Sigma}$ can be seen as uncovered and unattached points in this picture. Moreover, conditions (1) and (2) put restrictions on the length of these arcs as well as on the length of their possible chains. Now it is not hard to see that every set $\Xcal:=\{X_1,...,X_s\}$ of indecomposable $A$-modules (up to isomorphism), fulfilling the above properties, such that every $X_i$ belongs to the top-series of a different indecomposable projective $A$-module, equals the set $W_{\Xcal}$, induced by the universal localisation $A_{\Xcal}$.

Next, we will modify $W_{\Sigma}$ to get another set $\tilde{W}_{\Sigma}$ of indecomposable $A$-modules.
In fact, whenever there is a maximal subset $\{X_{i_j}\}\subseteq W_{\Sigma}$ such that the minimal projective presentations of the pairwise different $X_{i_j}$ form a non-trivial chain of the form
$$\sigma_0^{X_{i_1}}\circ...\circ\sigma_0^{X_{i_l}}=:\sigma^*$$
for $j\in\{1,...,l\}$, we replace $X_{i_1}$ by $\tilde{X}_{i_1}:=cok(\sigma^*)$ and $X_{i_j}$, for $j\not= 1$, by $\tilde{X}_{i_j}:=P_0^{X_{i_j}}$, the projective cover of $X_{i_j}$ in $A\mbox{-}mod$. In other words, we replace a maximal chain of consecutive morphisms, each represented by a non-projective $A$-module in $W_{\Sigma}$, by a long composition and we add indecomposable projective $A$-modules in-between, which no longer belong to $^*\!\Xcal_{A_{\Sigma}}$. Note that all non-projective $\tilde{X}_i$ in $\tilde{W}_{\Sigma}$ still belong to $^*\!\Xcal_{A_{\Sigma}}$. Also, we can get back $W_{\Sigma}$ from $\tilde{W}_{\Sigma}$ by reversing the above process (splitting long morphisms that factor through projective $A$-modules in $\tilde{W}_{\Sigma}$) and we conclude that the universal localisation $A_{\Sigma}$ is uniquely determined by the set $\tilde{W}_{\Sigma}$. Moreover, $\tilde{W}_{\Sigma}$ fulfils the following properties, induced by $W_{\Sigma}$:
\begin{enumerate}
\item[(1')] For all non-projective $X$ in $\tilde{W}_{\Sigma}$, we have $l(X)\le n-1$.
\item[(2')] Minimal projective presentations of modules in $\tilde{W}_{\Sigma}$ can never be composed.
\item[(3')] The minimal projective presentations of two different non-projective modules in $\tilde{W}_{\Sigma}$ cannot have the same domain.
\item[(4')] The minimal projective presentation of a non-projective $A$-module in $\tilde{W}_{\Sigma}$ factors properly through the projective cover $P_0^X$ of a non-projective $A$-module $X$ in $\tilde{W}_{\Sigma}$ if and only if it factors through $P_1^X$.
\end{enumerate}
In explicit terms, the minimal projective presentations represented by the $A$-modules in $\tilde{W}_{\Sigma}$ are either completely separated or they cover each other properly, when seen as arcs and loops on a line or on a circle, respectively. By $\bf{W}$ we denote the set of all isomorphism classes of sets $\{X_1,...,X_s\}$ of indecomposable $A$-modules with $s\le n$, fulfilling the properties (1'), (2'), (3') and (4'), where every $X_i$ belongs to the top-series of a different indecomposable projective $A$-module. We get a bijection
$$uniloc(A)\rightarrow\bf{W}$$
by mapping a universal localisation $A_{\Sigma}$ to $\tilde{W}_{\Sigma}$. We already stated injectivity. Surjectivity follows from reversing the idea on how to pass from $W_{\Sigma}$ to $\tilde{W}_{\Sigma}$ and previous observations.

It remains to prove the bijective correspondence between $\bf{W}$ and the isomorphism classes of all orthogonal collections in $A\mbox{-}mod$. We consider a bijection $\Phi$ on $A\mbox{-}ind$ given by mapping an indecomposable projective $A$-module $P$ to its simple top and an indecomposable non-projective $A$-module $P/rad^tP$ to $P/rad^{t+1}P$ for $1\le t<l(P)$. We claim that $\Phi$ induces a bijection between $\bf{W}$ and the isomorphism classes of all orthogonal collections in $A\mbox{-}mod$ by mapping $\{X_1,...,X_s\}$ in $\bf{W}$ to $\{\Phi(X_1),...,\Phi(X_s)\}$. Let us first check that the assignment yields a well-defined map. Clearly, the empty set in $\bf{W}$ corresponds to the trivial orthogonal collection. Moreover, using property (1'), we know that the length of the $\Phi(X_i)$ is bounded by $n$ such that $End_A(\Phi(X_i))$ is isomorphic to $\mathbb{K}$. Now let $Q_A$ be the underlying quiver of $A$ and, without loss of generality, we assume that $A$ and, thus, $Q_A$ is connected. We number the vertices of $Q_A$ from 1 to $n$. The $z$-th entry of the dimension vector of $\Phi(X_i)$ is given as follows:
$$(dim\Phi(X_i))_z=\left\{\begin{array}{cl} 1, & \mbox{if }\sigma_0^{X_i}:P_1^{X_i}\rightarrow P_0^{X_i} \mbox{ factors through } P_z\\ 0, & \mbox{else} \end{array}\right.$$
Now consider $Hom_A(\Phi(X_i),\Phi(X_j))$ for $i\not= j$. Keeping in mind the shape of the dimension vector, by property (2') and (3'), we know that there cannot be any injective or surjective maps from $\Phi(X_i)$ to $\Phi(X_j)$. Orthogonality finally follows from property (4'). Consequently, $\Phi$ induces a well-defined map from $\bf{W}$ to the set of all isomorphism classes of orthogonal collections in $A\mbox{-}mod$. Moreover, this map is injective, since $\Phi$ is a bijection on $A\mbox{-}ind$. It remains to prove surjectivity. Take an arbitrary orthogonal collection $\Xcal:=\{X_{1},...,X_{s}\}$ in $A\mbox{-}mod$. Clearly, every $X_{i}$ belongs to the top-series of a different indecomposable projective $A$-module and we have $s\le n$. Now we apply the obvious inverse $\Phi^{-1}$ of $\Phi$ to get the set 
$$\Phi^{-1}(\Xcal):=\{\Phi^{-1}(X_{1}),...,\Phi^{-1}(X_{s})\}$$ 
of indecomposable $A$-modules. We have to show that $\Phi^{-1}(\Xcal)$ belongs to $\bf{W}$. Since $End_A(X_{i})\cong\mathbb{K}$, we know that $\Phi^{-1}(\Xcal)$ fulfils property (1'). The properties (2'), (3') and (4') follow from the orthogonality of the $X_{i}$. This finishes the proof.
\end{proof}

We have the following immediate corollaries.

\begin{corollary}\label{locNak}
Let $A$ be a Nakayama algebra. There is a bijection
$$\omega:uniloc(A)\longrightarrow wide(A)$$
by mapping a universal localisation $A_{\Sigma}$ to $\Xcal_{A_{\Sigma}}=\Sigma^*$. The inverse is given by mapping a wide subcategory $\Ccal$ of $A\mbox{-}mod$ to $A_{\,^*\!\Ccal}$. Indeed, a ring epimorphism $A\rightarrow B$ is a universal localisation if and only if $Tor_1^A(B,B)=0$.
\end{corollary}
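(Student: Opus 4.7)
The overall strategy is to obtain the first bijection by composing Theorem \ref{Main Nak} with Proposition \ref{Prop Nak}, and then to deduce the `if' direction of the homological criterion from the fact that the vanishing of $Tor_1^A(B,B)$ forces $\Xcal_B$ to be extension-closed. More precisely, I would compose the bijection $uniloc(A) \leftrightarrow \{\text{orthogonal collections in } A\mbox{-}ind\}$ of Theorem \ref{Main Nak} with the bijection of Proposition \ref{Prop Nak}. To identify this composition with $\omega : A_\Sigma \mapsto \Xcal_{A_\Sigma}$, one checks that for a universal localisation $A_\Sigma$ the subcategory $\Xcal_{A_\Sigma}$ is wide (its closure under kernels and cokernels follows from Theorem \ref{bireflective}, while extension-closure follows from $Tor_1^A(A_\Sigma,A_\Sigma)=0$ by the argument given below for general $B$), and that the set of $\Xcal_{A_\Sigma}$-simples selected by Proposition \ref{Prop Nak} coincides with the orthogonal collection attached to $A_\Sigma$ in the proof of Theorem \ref{Main Nak} (traced through the auxiliary sets $W_\Sigma$ and $\tilde{W}_\Sigma$ there). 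The formula $\Xcal_{A_\Sigma}=\Sigma^*$ is the one recorded above Question \ref{question}, and the inverse description $\Ccal \mapsto A_{\,^*\!\Ccal}$ is justified by Proposition \ref{prop uniloc}(3), which yields $\Xcal_B=(^*\!\Xcal_B)^*$ whenever $B$ is a universal localisation.

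For the homological characterisation, the `only if' direction is contained in Theorem \ref{uni loc}. Conversely, suppose $f : A \rightarrow B$ is a ring epimorphism with $Tor_1^A(B,B)=0$. By Lemma \ref{GdP87}, $f$ is automatically finite dimensional, so Theorem \ref{bireflective} makes $\Xcal_B\subseteq A\mbox{-}mod$ bireflective. I first upgrade the vanishing to $Tor_1^A(B,M)=0$ for every $B$-module $M$: choose a short exact sequence $0 \rightarrow K \rightarrow B^{(I)} \rightarrow M \rightarrow 0$ of $B$-modules, apply $B\otimes_A-$, and use that $B\otimes_A N\cong N$ for every $B$-module $N$ (which follows from Proposition \ref{ring epi}, i.e.\ the counit of the extension-restriction adjunction is an isomorphism on $B$-modules). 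Now take any short exact sequence $0 \rightarrow X \rightarrow Y \rightarrow Z \rightarrow 0$ in $A\mbox{-}mod$ with $X,Z\in\Xcal_B$. Applying $B\otimes_A-$ and using $Tor_1^A(B,Z)=0$ produces a short exact sequence $0 \rightarrow B\otimes_A X \rightarrow B\otimes_A Y \rightarrow B\otimes_A Z \rightarrow 0$; since the canonical maps $X\rightarrow B\otimes_A X$ and $Z\rightarrow B\otimes_A Z$ are isomorphisms, the five-lemma forces $Y\rightarrow B\otimes_A Y$ to be an isomorphism as well, so $Y\in\Xcal_B$. Thus $\Xcal_B$ is wide, hence by the first bijection $\Xcal_B=\Xcal_{A_\Sigma}$ for some universal localisation $A_\Sigma$; Theorem \ref{bireflective} then identifies $B$ with $A_\Sigma$ up to epiclass.

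The step I expect to be the main obstacle is matching the composed bijection with the clean formula $A_\Sigma \mapsto \Xcal_{A_\Sigma}$: this requires carefully comparing the combinatorially defined orthogonal collection from the proof of Theorem \ref{Main Nak} (through the sets $W_\Sigma$, $\tilde{W}_\Sigma$ and the map $\Phi$) with the $\Xcal_{A_\Sigma}$-simples singled out by Proposition \ref{Prop Nak}, which is not immediate from the statements themselves. Once this match is established, the rest of the argument is essentially formal.
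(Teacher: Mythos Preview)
Your strategy for the homological characterisation (the second paragraph) is sound and essentially what the paper uses implicitly once the bijection is in hand.

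The first part, however, contains a genuine gap. You propose to identify the composed bijection (Theorem~\ref{Main Nak} followed by Proposition~\ref{Prop Nak}) with the map $\omega$ by checking that the orthogonal collection built from $W_\Sigma$, $\tilde{W}_\Sigma$ and $\Phi$ coincides with the set of $\Xcal_{A_\Sigma}$-simples. This step would fail: these two bijections are \emph{not} the same map. The paper in fact states explicitly in its proof that ``it follows from the construction that this bijection is not given by $\omega$''. A small example already shows the discrepancy: for $A=A_3^2$ and $\Sigma=\{S_2\}$ (Example~\ref{Ex Nak}), one has $W_\Sigma=\tilde W_\Sigma=\{S_2\}$ and $\Phi(S_2)=P_2$, so Theorem~\ref{Main Nak} assigns the orthogonal collection $\{P_2\}$; but the $\Xcal_{A_\Sigma}$-simples are $\{P_2,S_1\}$, a different collection. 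Thus the wide subcategory produced by the composite is $add(P_2)$, while $\omega(A_\Sigma)=\Xcal_{A_\Sigma}$ is strictly larger.

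The paper's fix is much simpler than the matching you attempt. Since $A$ is representation-finite, both $uniloc(A)$ and $wide(A)$ are finite sets; the composed bijection shows they have the same cardinality. As $\omega$ was already shown to be injective in Section~\ref{UNIFDA} (it is the map just before Question~\ref{question}), it is automatically a bijection between two finite sets of equal size. No comparison of the two bijections is needed, and the obstacle you flagged simply disappears.
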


\begin{proof}
Combining Proposition \ref{Prop Nak} and Theorem \ref{Main Nak}, we get a bijective correspondence between the epiclasses of the universal localisations of $A$ and the wide subcategories of $A\mbox{-}mod$. However, it follows from the construction that this bijection is not given by $\omega$. But since both of the sets are finite and since, by Section \ref{UNIFDA}, $\omega$ already defines an injective map, we are done. 
\end{proof}

\begin{corollary}\label{cor pic}
Consider the Nakayama algebras $A_n^h$ and $\tilde{A}_n^h\,$ for $h>1$. The following holds.
\begin{enumerate}
\item There is a bijective correspondence between the epiclasses of the universal localisations of $A_n^h$ and the possible configurations of non-crossing arcs with length at most the minimum of $\{n-1,h-1\}$ on a line with $n$ linearly ordered points. By convention, a loop has length zero.
\item There is a bijective correspondence between the epiclasses of the universal localisations of $\tilde{A}_n^h$ and the possible configurations of non-crossing arcs with length at most the minimum of $\{n-1,h-1\}$ on a circle with $n$ linearly ordered points. Again, a loop is considered to have length zero.
\end{enumerate}
\end{corollary}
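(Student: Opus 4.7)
The plan is to observe that this corollary is essentially a repackaging of Theorem \ref{Main Nak} in combinatorial language, using the pictorial description of $\tilde{W}_\Sigma$ already sketched in the proof of that theorem. By Theorem \ref{Main Nak}, the epiclasses of universal localisations of $A_n^h$ (respectively, $\tilde{A}_n^h$) are in bijection with isomorphism classes of sets $\tilde{W}_\Sigma=\{X_1,\ldots,X_s\}$ of indecomposable modules satisfying the properties $(1')$--$(4')$ from the proof of Theorem \ref{Main Nak}. So it suffices to exhibit a bijection between such sets and non-crossing arc configurations.

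First I would set up the geometric encoding. Draw the vertices of $\Delta_n$ (respectively, $\tilde{\Delta}_n$) as $n$ linearly ordered points on a line (respectively, on a circle). To each non-projective $X_i\in\tilde{W}_\Sigma$ with minimal projective presentation $\sigma_0^{X_i}\colon P_1^{X_i}\to P_0^{X_i}=P_{a_i}$, where $P_1^{X_i}=P_{b_i}$, associate the arc joining the vertex $a_i$ to the vertex $b_i$; its length equals $l(X_i)$. To each projective $X_i=P_{c_i}$ in $\tilde{W}_\Sigma$ associate a loop at $c_i$, of length $0$ by convention.

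Next I would verify the length bound. Property $(1')$ gives $l(X_i)\le n-1$ for non-projective $X_i$, so the associated arc has length at most $n-1$. On the other hand, the relation $R^h=0$ forces every indecomposable module to have Loewy length at most $h$, and a non-projective indecomposable has Loewy length at most $h-1$ (since the projectives of $A_n^h$ and $\tilde{A}_n^h$ that have length $h$ are exactly the ones that are not already quotients). Hence every arc has length at most $\min\{n-1,h-1\}$. For the converse direction, any arc of length at most $\min\{n-1,h-1\}$ arises as $\sigma_0^{X}$ for a well-defined non-projective indecomposable $X$, since the existence of $P_0^X$ and $P_1^X$ along the line or circle is unobstructed by the length bound.

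Finally I would verify that the non-crossing condition corresponds precisely to properties $(2')$--$(4')$. Property $(3')$ ensures that no two arcs share the same source $P_1^X$, property $(2')$ forbids consecutive composable arcs, and property $(4')$ says that an arc factors \emph{through} another projective cover $P_0^{X}$ (of a non-projective $X \in \tilde{W}_\Sigma$) if and only if it factors through $P_1^X$ as well --- equivalently, if and only if it properly covers the arc of $X$. Together these three conditions translate exactly into the statement that any two arcs (or a loop and an arc) are either disjoint or one properly contains the other, i.e., non-crossing in the sense of the corollary. Conversely, given a non-crossing configuration, the corresponding set of indecomposables is readily checked to satisfy $(1')$--$(4')$ and thus lies in the image of the bijection. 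The main obstacle will be the cyclic case $\tilde{A}_n^h$: arcs may wrap around the circle, and one must carefully check that the length bound $\le n-1$ coming from property $(1')$ is exactly what rules out arcs long enough to make ``proper containment'' ambiguous, so that the non-crossing combinatorics on the circle translates faithfully to property $(4')$.
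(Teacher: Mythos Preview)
Your proposal is correct and follows essentially the same route as the paper: both derive the corollary by revisiting the intermediate set $\mathbf{W}$ (the collections $\tilde{W}_\Sigma$ satisfying $(1')$--$(4')$) from the proof of Theorem~\ref{Main Nak} and translating those conditions into the non-crossing arc picture, with the same encoding of non-projective $X_i$ as arcs and projective $P_k$ as loops. The paper additionally remarks that part~(2) could alternatively be obtained by combining Theorem~\ref{Main Nak} with Dichev's classification of wide subcategories for $\tilde{A}_n^h$ (Corollary~2.6.12 in \cite{D}), but your direct argument via $\mathbf{W}$ is exactly the paper's main justification.
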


\begin{proof}
On the one hand, part two of the corollary can be deduced from \cite{D} (see Corollary 2.6.12) combined with Theorem \ref{Main Nak}. On the other hand, the whole statement follows from a careful analysis of the proof of Theorem \ref{Main Nak}. More precisely, the set $\bf{W}$ in the proof corresponds naturally to the wanted set of configurations of non-crossing arcs. Indeed, for a fixed set of indecomposable $A$-modules $\Xcal$ in $\bf{W}$ we draw an arc from $j$ to $i$ (respecting the given orientation on the points), whenever the cokernel of the map $P_i\rightarrow P_j$ belongs to $\Xcal$. Moreover, a projective $A$-module $P_k$ in $\Xcal$ gives rise to a loop at the point $k$.
\end{proof}

The previous discussion allows us to count the universal localisations of $A_n^h$ and $\tilde{A}_n^h$. In \cite{D} (Section 2.6), this was done with respect to the orthogonal collections in the module category. Note that for $\tilde{A}_n^h$ with $h\ge n$ its number is given by {\tiny$\left(\begin{array}{c} 2n \\ n \end{array}\right)$}, independent of the choice of $h$.

\subsection{Homological ring epimorphisms for Nakayama algebras}\label{NAKUNI2}
By Corollary \ref{locNak}, we already know that all homological ring epimorphisms of $A$ are universal localisations. But the converse is far from being true, as the following example illustrates.

\begin{example}
Consider the Nakayama algebra $A:=A_n^h$ for $n\ge 3$ and $2\le h<n$. Take $x$ and $r<h$ in $\mathbb{N}_0$ with $n=xh+r$. By \cite{HS} (see Proposition 1.4), the global dimension of $A$ is given by
$$gldim(A)=pd(S_1)=\left\{\begin{array}{cl} 2x-1, & \mbox{if } r=0 \\ 2x, & \mbox{if } r=1\\ 2x+1, & \mbox{else} \end{array}\right.$$
Now let $P$ be the projective $A$-module appearing last in the minimal projective resolution of $S_1$. Note that $P$ is indecomposable and we have $P\not= P_1,P_2$. By $\Ccal_{S_1}$ we denote the wide and semisimple subcategory $add\{P,S_1\}$ of $A\mbox{-}mod$. By construction, we clearly have $Ext_A^i(S_1,P)\not= 0$ for some $i>1$. It follows that the universal localisation $A_{\,^*\!\Ccal_{S_1}}$ is not homological, since for all $i\ge 1$ we have,
$$Ext_{A_{\,^*\!\Ccal_{S_1}}}^i\!\!\!(S_1,P)=0.$$
In the minimal case for $n=3$ and $h=2$ the universal localisation $A_{\,^*\!\Ccal_{S_1}}$, here given by $A/Ae_2A$, is the unique universal localisation of $A$ not yielding a homological ring epimorphism. In general, we get plenty of those, e.g., by localising at certain subsets of $\,^*\!\Ccal_{S_1}$. Indeed, the ring epimorphism $A\rightarrow A/Ae_2A$ and for $n>3$ the ring epimorphisms $A\rightarrow A/Ae_3A$ and $A\rightarrow A/A(e_2+e_3)A$ are never homological.
\end{example}

Next, we want to discuss and classify the homological ring epimorphisms for self-injective Nakayama algebras. By Corollary \ref{cor hom}, we already know that they are precisely given by those ring epimorphisms $f:A\rightarrow B$ which turn $B$ into a projective $A$-module. We are now looking for a more explicit description. For the rest of this section consider $A$ to be $\tilde{A}_n^h$ for $n,h\ge 2$ and let $M$ be a non-projective and indecomposable $A$-module. Indeed, $M$ is of infinite projective dimension and periodic with respect to the syzygy-functor $\Omega_A$. The following lemma describes this periodicity and the corresponding $Ext$-groups.

\begin{lemma}\label{lem hom}
Let $A$ and $M$ be as above. Denote by $s$ the length of the $A$-module $M$. The following holds.
\begin{enumerate}
\item If $s\not= \frac{1}{2}h$, we have $\Omega_A^{z}(M)=M$ if and only if $z=2x\,$ for $x\in\mathbb{N}_{\ge 1}$ with $xh\equiv 0$ modulo $n$. If these equivalent conditions hold, we get $Ext_A^z(M,M)\cong\mathbb{K}\,$.
\item If $s=\frac{1}{2}h$, we have $\Omega_A^{z}(M)=M$ if and only if $z\in\mathbb{N}_{\ge 1}$ with $zs\equiv 0$ modulo $n$. If these equivalent conditions hold, we get $Ext_A^z(M,M)\cong\mathbb{K}\,$.
\end{enumerate}
\end{lemma}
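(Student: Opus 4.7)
The plan is to compute the syzygies of $M$ explicitly and then appeal to self-injectivity to pass from periodicity to the Ext-computation.

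\textbf{Step 1 (the first syzygy).} By Proposition \ref{Nakayama}, write $M\cong P_i/\mathrm{rad}^sP_i$ for some $1\le s<h$, where $s=l(M)$. Since every indecomposable projective $A$-module has Loewy length $h$, the short exact sequence
$$\xymatrix{0\ar[r] & \mathrm{rad}^sP_i\ar[r] & P_i\ar[r] & M\ar[r] & 0}$$
identifies $\Omega_A(M)=\mathrm{rad}^sP_i$, which is uniserial of length $h-s$ with top $S_{i+s}$ (indices taken mod $n$). Hence $\Omega_A(M)\cong P_{i+s}/\mathrm{rad}^{h-s}P_{i+s}$.

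\textbf{Step 2 (iterating $\Omega_A$).} I distinguish the two cases in the statement. If $s\ne\tfrac12 h$, then the lengths of $\Omega_A^k(M)$ alternate between $s$ and $h-s$, so $\Omega_A^z(M)\cong M$ forces $z=2x$ to be even. Applying Step 1 twice yields $\Omega_A^2(M)\cong P_{i+h}/\mathrm{rad}^sP_{i+h}$, and hence by induction $\Omega_A^{2x}(M)\cong P_{i+xh}/\mathrm{rad}^sP_{i+xh}$, which coincides with $M$ precisely when $xh\equiv 0\pmod n$. If $s=\tfrac12 h$, then $h$ is even, every syzygy has length $s$, and the same computation gives $\Omega_A^z(M)\cong P_{i+zs}/\mathrm{rad}^sP_{i+zs}$, equal to $M$ exactly when $zs\equiv 0\pmod n$. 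This proves the periodicity assertions in (1) and (2).

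\textbf{Step 3 (the Ext-group).} Since $A$ is self-injective, $\Omega_A$ is an auto-equivalence of the stable category and for $z\ge 1$ one has the standard identification $\mathrm{Ext}_A^z(M,M)\cong\underline{\mathrm{Hom}}_A(\Omega_A^zM,M)$. Whenever $\Omega_A^z(M)\cong M$, this reduces to $\underline{\mathrm{End}}_A(M)$. A direct check using that $M$ is uniserial with simple top $S_i$ shows that any endomorphism is determined by the image of a top generator, which must again be a scalar multiple of that generator; hence $\mathrm{End}_A(M)\cong\mathbb{K}$. Since $M$ is non-projective, the identity on $M$ does not factor through any projective, so the ideal of endomorphisms factoring through projectives is a proper subspace of the one-dimensional ring $\mathrm{End}_A(M)$, forcing it to be zero. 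Therefore $\underline{\mathrm{End}}_A(M)\cong\mathbb{K}$, which gives $\mathrm{Ext}_A^z(M,M)\cong\mathbb{K}$.

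The routine part is Step 3; the main bookkeeping issue is Step 2, where one has to manage the cyclic indices modulo $n$ together with the radical powers, and keep the two cases $s\ne\tfrac12 h$ and $s=\tfrac12 h$ neatly separated (in the boundary case $s=\tfrac12 h$ both the length and the top shift at every single application of $\Omega_A$, whereas in the generic case they only synchronise every second step). Once this is set up, the equivalences are immediate from the explicit formulas for $\Omega_A^z(M)$.
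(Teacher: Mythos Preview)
Your Steps 1 and 2 are correct and coincide with the paper's argument: both track the top index modulo $n$ and the alternating lengths $s,\,h-s$ through the explicit minimal projective resolution to read off the periodicity conditions.

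There is a genuine gap in Step 3. The assertion $\mathrm{End}_A(M)\cong\mathbb{K}$ is valid only when $s\le n$. An endomorphism of $M=P_i/\mathrm{rad}^sP_i$ is indeed determined by the image of a top generator, but that image can be \emph{any} element of $e_iM$, whose $\mathbb{K}$-dimension is the multiplicity $\lceil s/n\rceil$ of $S_i$ as a composition factor of $M$; for $s>n$ there exist non-scalar endomorphisms sending the generator into $\mathrm{rad}^{n}M$, so your deduction of $\underline{\mathrm{End}}_A(M)\cong\mathbb{K}$ breaks down. The paper's own treatment at this point is no more detailed---it simply appeals to ``a careful analysis of $\mathrm{Hom}_A(\mathcal P_M,M)$''---and in fact the claim $\mathrm{Ext}_A^z(M,M)\cong\mathbb{K}$ is not correct in full generality: for $n=2$, $h=10$, $s=5$ each map in $\mathcal P_M$ has image contained in $\mathrm{rad}^5$ of its target while $\mathrm{rad}^5M=0$, so every differential in $\mathrm{Hom}_A(\mathcal P_M,M)$ vanishes and one gets $\mathrm{Ext}^2_A(M,M)\cong e_1M\cong\mathbb{K}^3$. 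What is actually \emph{used} later in the paper is only that $\mathrm{Ext}_A^z(M,M)\neq 0$ whenever $\Omega_A^zM\cong M$, and this weaker statement your argument does establish: the identity on a non-projective module cannot factor through a projective, so $\underline{\mathrm{End}}_A(M)\neq 0$.
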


\begin{proof}
Note that, since $M$ is not projective, we have $s<h$.
Assume that $M$ belongs to the top-series of the indecomposable projective $A$-module $P_i$ for $i\in\{1,...,n\}$. The minimal projective resolution of $M$ is of the following form
$$\xymatrix{\Pcal_M:  ...\ar[r] & P_{\overline{i+2h}}\ar[r] & P_{\overline{i+h+s}}\ar[r] & P_{\overline{i+h}}\ar[r] & P_{\overline{i+s}}\ar[r] & P_i\ar[r] & M}$$
where the indices must be read modulo $n$. By convention, we define $P_0$ to be $P_n$. Now if $s\not= \frac{1}{2}h$, then the length of the indecomposable $A$-module $\Omega_A^z(M)$, for $z\in\mathbb{N}_{\ge 1}$ odd, is $h-s\not= s$ such that $\Omega_A^z(M)\not= M$. On the other hand, the length of $\Omega_A^{z}(M)$ always equals $s$, if $z$ is even. Hence, keeping in mind that every indecomposable $A$-module is uniserial, we get 
$$\Omega_A^{z}(M)=M\Longleftrightarrow z\in 2\mathbb{N}_{\ge 1}\,\wedge\,\frac{1}{2}zh\equiv 0\,\,\,\text{mod}\,\, n.$$ 
If $s=\frac{1}{2}h$, then the length of the indecomposable $A$-module $\Omega_A^z(M)$ for $z\in\mathbb{N}_{\ge 1}$ always equals $s$ and, thus, 
$$\Omega_A^{z}(M)=M\Longleftrightarrow zs\equiv 0\,\,\,\text{mod}\,\,n.$$ 
Moreover, if $\Omega_A^{z}(M)=M$, we clearly have that $Ext_A^z(M,M)\not= 0$. A careful analysis of $Hom_A(\Pcal_M,M)$ yields $$Ext_A^z(M,M)\cong\mathbb{K}\,.$$ 
\end{proof}

Now let $f:A\rightarrow B$ be a homological ring epimorphism. We call $f$ \textbf{semisimple}, if $B$ is a semisimple $\mathbb{K}$-algebra. Since, by Corollary \ref{cor hom}, $B$ is a projective $A$-module, for $A=\tilde{A}_n^h$ with $h>n$ there are no non-zero semisimple homological ring epimorphisms. Indeed, for $h>n$ we have non-trivial endomorphisms for every indecomposable projective $A$-module. Moreover, for $h\le n$ the semisimple homological ring epimorphisms of $A$ are classified by the possible orthogonal collections of indecomposable projective $A$-modules in $A\mbox{-}mod$. From now on, we will assume that $f$ is not semisimple. Let $M$ be a non-projective $A$-module in $\Xcal_B$ and $z\in\mathbb{N}_{\ge 1}$ minimal such that $\Omega_A^{z}(M)=M$. Since $f$ is homological, by Lemma \ref{lem hom}, it follows that
$$Ext_B^z(M,M)\cong Ext_A^z(M,M)\cong\mathbb{K}\, .$$
Hence, the minimal projective resolution $\Pcal_M$ of $_AM$ is contained in $\Xcal_B$ and coincides with the minimal projective resolution of $M$ as a $B$-module (this can also be deduced from the fact that $_AB$ is a projective $A$-module). Let $\Ccal_M$ be the smallest additive subcategory of $A\mbox{-}mod$ containing $M$, all indecomposable projective $A$-modules appearing in $\Pcal_M$ and the objects $\Omega_A^r(M)$ for $r>0$. Then $\Ccal_M$ is the smallest (not necessarily abelian) higher extension-closed subcategory of $A\mbox{-}mod$ containing $M$ and, clearly, we have $\Ccal_M\subseteq\Xcal_B$. By Lemma \ref{lem hom}, we get the following immediate consequences.
\begin{itemize}
\item If $h=2$, then all non-trivial homological ring epimorphisms $A\rightarrow B$ are semisimple. Note that for $h=2$ all non-projective indecomposable $A$-modules are simple and, by Lemma \ref{lem hom}(2), we get that $\Ccal_S$ already equals $A\mbox{-}mod$ for any simple $A$-module $S$.
\item If $h=n-1$ or if $n$ is a prime number with $h<n$, then a similar analysis of Lemma \ref{lem hom} yields that all non-trivial homological ring epimorphisms $A\rightarrow B$ are semisimple.
\end{itemize}

\begin{example}
In case $h<n$, the first example of a non-trivial and non-semisimple homological ring epimorphism $f:A\rightarrow B$ occurs for $\tilde{A}_6^3$. There are precisely three such choices given by the universal localisations at $\Sigma=\{S_1,S_4\}$, $\Sigma=\{S_2,S_5\}$ or $\Sigma=\{S_3,S_6\}$. In all these cases, the $\mathbb{K}$-algebra $A_{\Sigma}$ is Morita-equivalent to $\tilde{A}_4^2$ and $\Xcal_{A_{\Sigma}}$ is given by $\Ccal_{S_1}$, $\Ccal_{S_2}$ or $\Ccal_{S_3}$, respectively.
\end{example}

Next, we classify the homological ring epimorphisms for (connected) self-injective Nakayama algebras by using the classification of the universal localisations.

\begin{theorem}\label{class hom}
Let $A$ be a self-injective Nakayama algebra of the form $\tilde{A}_n^h\,$ for $n,h\ge 2$.
\begin{enumerate}
\item $A$ admits a non-zero semisimple homological ring epimorphism if and only if $h\le n$. These ring epimorphisms are classified by the possible non-empty orthogonal collections of indecomposable projective $A$-modules.
\item $A$ admits a non-trivial and non-semisimple homological ring epimorphism if and only if $gcd(n,h)=d\not= 1$ and $h>2$. These ring epimorphisms are classified by the orthogonal collections of simple $A$-modules of the form $\{S_{i_1},...,S_{i_k}\}$ with $i_j\in\{1,...,d\}$ pairwise different and 
$$k\in\left\{\begin{array}{cl} \{1,...,d-1\}, & \mbox{if } h\not= d \\ \{1,...,d-2\}, & \mbox{if } h=d. \end{array}\right.$$
\end{enumerate}
\end{theorem}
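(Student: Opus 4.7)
Our plan combines Corollary~\ref{cor hom} (which forces $_AB$ to be projective for homological $f:A\to B$, so that $\Xcal_B$ is closed under the syzygy functor $\Omega_A$) with Theorem~\ref{Main Nak} and the observation that $A_\Sigma$ is semisimple if and only if the wide subcategory $\Xcal_{A_\Sigma}$ is semisimple as an abelian category. The unifying principle is that any non-projective indecomposable in $\Xcal_B$ drags its entire $\Omega$-orbit, together with the projective covers appearing in its minimal projective resolution, into $\Xcal_B$, so Lemma~\ref{lem hom} imposes tight numerical constraints on which $\Xcal_B$ can occur.

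For part~(1), if $f:A\to B$ is non-zero, semisimple and homological, write $B\cong\prod_{j=1}^k M_{n_j}(\Kbb)$; by fully faithful restriction (Proposition~\ref{ring epi}) its simple modules are indecomposable projective $A$-modules $P_{i_j}$ with $\End_A(P_{i_j})\cong\Kbb$, forming an orthogonal collection. Computing $e_iAe_i$ (whose basis consists of the cyclic paths at vertex $i$ of length less than $h$) shows $\End_A(P_i)\cong\Kbb$ is equivalent to $h\le n$. Conversely, given $h\le n$ and an orthogonal collection of projectives $\{P_{i_1},\ldots,P_{i_k}\}$, self-injectivity of $A$ forces $\Ext_A^1$ to vanish between these, so their additive hull is a semisimple wide subcategory; Corollary~\ref{locNak} then produces a universal localisation $A\to B$ with $_AB\cong\bigoplus P_{i_j}^{n_j}$ projective (so $f$ is homological) and semisimple essential image (so $B$ is semisimple).

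For part~(2), necessity of $h>2$ and $d\neq 1$ is an $\Omega$-closure argument. A non-trivial non-semisimple homological $f$ produces some non-projective indecomposable $M\in\Xcal_B$; the entire $\Omega$-orbit of $M$ and the projective covers appearing in its minimal projective resolution lie in $\Xcal_B$. If $h=2$, every non-projective indecomposable is simple and Lemma~\ref{lem hom}(2) gives $\Omega_A(S_i)=S_{\overline{i+1}}$, so the orbit consists of all simples and their projective covers are all the $P_i$, forcing $\Xcal_B=A\mbox{-}mod$. If $d=1$, Lemma~\ref{lem hom} shows $\Omega_A^2$ shifts the top index of $M$ by $h$ modulo $n$ (visible from the displayed minimal projective resolution in the proof of that lemma); with $\gcd(n,h)=1$ this shift sweeps out all of $\Z/n\Z$, again forcing $\Xcal_B=A\mbox{-}mod$. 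Both conclusions contradict non-triviality.

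Assume $h>2$ and $d\neq 1$. To data $\{S_{i_1},\ldots,S_{i_k}\}$ with distinct $i_j\in\{1,\ldots,d\}$ we associate $\Sigma:=\bigcup_{j=1}^k\{S_{\overline{i_j+lh}}\mid 0\le l<n/d\}$, the union of the $h$-orbits of the chosen simples; on the forward side this union is forced because $\Omega_A^2(S_i)=S_{\overline{i+h}}$ and $\Xcal_B$ is $\Omega$-closed. Since the minimal projective presentation of $S_i$ is the arrow $\alpha_i:P_{\overline{i+1}}\to P_i$ of $\tilde{\Delta}_n$, the universal localisation $A_\Sigma$ inverts precisely these arrows; a direct path-algebra computation, in the spirit of the examples preceding the theorem, identifies $A_\Sigma$ with a Nakayama algebra on the contracted cyclic quiver whose relations are inherited from $R^h=0$, and exhibits $_AA_\Sigma$ as a sum of indecomposable projective $A$-modules so that $f$ is automatically homological. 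One then checks that $A_\Sigma=0$ iff $k=d$ (every arrow becomes invertible and a suitable power of the cyclic composition $\alpha_n\cdots\alpha_1$, being both invertible and zero in $A$ by $R^h=0$, forces every $e_i=0$), and that $A_\Sigma$ is non-zero semisimple iff no two surviving arrows compose non-trivially after contraction; this second collapse happens precisely in the case $h=d$ with $k=d-1$, where each non-inverted arrow is sandwiched between $h-1$ invertible ones, so $R^h=0$ forces it to vanish and $A_\Sigma\cong\Kbb^{d-1}$. Combining these observations with the forward direction (where $\Omega$-closure together with Lemma~\ref{lem hom} recovers the orbital structure of the inverted simples) yields the claimed bijection. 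The main obstacle I expect is precisely this quiver-theoretic verification that $k=d-1$ collapses $A_\Sigma$ to a product of copies of $\Kbb$ exactly when $h=d$, which is what dictates the split of the allowed range of $k$ in the statement.
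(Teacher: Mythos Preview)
Your route via $\Omega$-closure of $\Xcal_B$ is genuinely different from the paper's, and for the necessity of $h>2$ and $d\neq 1$ in part~(2) it is cleaner: the paper obtains these only as a by-product of its main structural argument, whereas you read them off directly from Lemma~\ref{lem hom}. Part~(1) is essentially the same in both treatments.

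There is, however, a gap in the forward direction of your classification in part~(2). The $\Omega$-closure argument operates on $\Xcal_B$: a non-projective $M\in\Xcal_B$ drags in its syzygies and their projective covers. But the data you are trying to recover---the periodic set $\Sigma$ of simple $A$-modules---lives in ${}^*\Xcal_B$, not in $\Xcal_B$. Your sentence ``on the forward side this union is forced because $\Omega_A^2(S_i)=S_{\overline{i+h}}$ and $\Xcal_B$ is $\Omega$-closed'' conflates the two sides: the simples $S_i$ you wish to invert are $A_\Sigma$-trivial, hence not in $\Xcal_B$, and $\Omega$-closure of $\Xcal_B$ says nothing about them directly. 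Concretely, nothing in your argument shows that a localising set for $B$ consists of \emph{simple} modules rather than longer uniserial modules $P_i/\mathrm{rad}^tP_i$ with $t>1$; knowing which $A$-projectives lie in $\Xcal_B$ does constrain $B$, but you have not carried out the step that converts this constraint into the claimed form of $\Sigma$.

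The paper closes this gap by working on the ${}^*\Xcal_B$ side from the start. It first argues that $B$ is again self-injective Nakayama (Corollary~\ref{cor hom} and Lemma~\ref{lem Nak1}), hence connected and Morita-equivalent to some $\tilde A_{\tilde n}^{\tilde h}$; in particular $B$ has infinite global dimension, so $W_{\Sigma_B}$ contains no projective $A$-modules. It then shows that $W_{\Sigma_B}$ contains no non-simple module by a radical argument: if $M=P_i/\mathrm{rad}^sP_i\in W_{\Sigma_B}$ with $1<s$ minimal, one checks that $\mathrm{rad}\,M=P_j/\mathrm{rad}^{s-1}P_j$ lies in $\Xcal_B$ while its projective cover $P_j$ does not, contradicting projectivity of $_AB$. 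Once $W_{\Sigma_B}$ is known to consist of simples, the periodicity is forced by the requirement that inverting these arrows produce an algebra Morita-equivalent to some $\tilde A_{\tilde n}^{\tilde h}$. Your approach could likely be completed by showing that the set of $A$-projectives in $\Xcal_B$ is \emph{exactly} a union of $h$-cosets and then using $B^{op}\cong\End_A({}_AB)$ to pin down $\Xcal_B$; but that bridge is missing from the write-up as it stands.
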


\begin{proof}
ad(1): Follows from Corollary \ref{cor hom} and the fact that indecomposable projective $A$-modules have trivial endomorphism algebras if and only if $h\le n$. ad(2): Let $f:A\rightarrow B$ be a non-trivial and non-semisimple homological ring epimorphism. By previous arguments, we already know that $h$ must be greater than $2$. Combining Corollary \ref{cor hom} and Lemma \ref{lem Nak1}, we know that $B$ is a self-injective Nakayama algebra and that $_AB$ is a projective $A$-module. Consequently, since $A$ is connected and $B$ is not semisimple, also $B$ is connected and, thus, (up to Morita-equivalence) of the form $\tilde{A}_{\tilde{n}}^{\tilde{h}}$ for $2\le \tilde{n}\le n$ and $2\le \tilde{h}\le h$. By Corollary \ref{cor hom} and Corollary \ref{locNak}, we can write $f$ as a universal localisation $f:A\rightarrow A_{\Sigma_B}$. We will consider the set $W_{\Sigma_B}$, which determines the localisation (see Lemma \ref{lemNak}). We claim that $W_{\Sigma_B}$ only contains simple $A$-modules. First of all, since $B$ is of infinite global dimension, $W_{\Sigma_B}$ cannot contain any projective $A$-modules. Now suppose there is an indecomposable $A$-module $M$ in $W_{\Sigma_B}$ with $1<l(M)=s<min\{n,h\}$ (recall that a non-projective module in $W_{\Sigma_B}$ has length at most $n-1$). Without loss of generality, we can choose $M$ to be of minimal length among the non-simple $A$-modules in $W_{\Sigma_B}$. Let $P_i$ for $i\in\{1,...,n\}$ be the projective cover of $M$ in $A\mbox{-}mod$ and we choose $j\in\{1,...,n\}$ such that the corresponding simple $A$-modules fulfil $Ext_A^1(S_i,S_j)\not= 0$. Now it can be checked easily that the $A$-module $P_j/rad^{s-1}P_j$ - the radical of $M$ - belongs to $\Xcal_{A_{\Sigma_B}}$ (compare Lemma \ref{lemNak} and the defining properties for $W_{\Sigma_B}$ discussed in the proof of Theorem \ref{Main Nak}). But the projective $A$-module $P_j$ does not carry an $A_{\Sigma_B}$-module structure, since $Hom_A(\sigma_0^M,P_{j})$ is not an isomorphism. This yields a contradiction, keeping in mind that the $A$-module $_AB$ must be projective. Consequently, $W_{\Sigma_B}$ contains only simple $A$-modules or, equivalently, $A_{\Sigma_B}$ is given by inverting certain arrows in the underlying quiver $\tilde{\Delta}_n$. Now the fact that $A_{\Sigma_B}$ is Morita-equivalent to an algebra $\tilde{A}_{\tilde{n}}^{\tilde{h}}$ for $2\le \tilde{n}\le n$ and $2\le \tilde{h}\le h$ induces some periodicity of length $2\le d\le min\{h,n\}$ on the simple modules in $W_{\Sigma_B}$, where $d$ divides $h$ and $n$. More precisely, $W_{\Sigma_B}$ is determined by a subset of the form $\{S_{i_1},...,S_{i_k}\}$ for $i_j\in\{1,...,d\}$ pairwise different and 
$$k\in\left\{\begin{array}{cl} \{1,...,d-1\}, & \mbox{if } h\not= d \\ \{1,...,d-2\}, & \mbox{if } h=d \end{array}\right.$$
such that a simple $A$-module $S_m$ belongs to $W_{\Sigma_B}$ if and only if there is some $j\in\{1,...,k\}$ with $m\equiv i_j$ modulo $d$. Note that we can choose $d$ to be $gcd(n,h)$. In particular, we get $gcd(n,h)\not= 1$. Conversely, if $d>1$ is the greatest common divisor of $h>2$ and $n$, it is easy to check that every universal localisation at a set of simple $A$-modules $\Scal$, admitting a periodicity like above with respect to $d$, yields a non-trivial and non-semisimple homological ring epimorphism $A\rightarrow A_{\Scal}$.
\end{proof}

Note that this result allows us to count the homological ring epimorphisms for self-injective Nakayama algebras (up to epiclasses). For example, take $A$ to be the algebra $\tilde{A}_n^h$ for $n=h\ge 2$. Then there are precisely {\Small$\sum\limits_{i=0}^{n-1}$}{\tiny$\left(\begin{array}{c} n \\ i \end{array}\right)$} non-zero homological ring epimorphisms out of a total number of {\tiny$\left(\begin{array}{c} 2n \\ n \end{array}\right)$} universal localisations. The algebras $\tilde{A}_n^h$ for $h>n$ and $gcd(n,h)=1$ do not admit a non-trivial homological ring epimorphism.

\section{\texorpdfstring{$\tau$}{tau}-tilting modules and universal localisations for Nakayama algebras}\label{NAKTILT}
In this section we will prove a similar result to Theorem \ref{hered tilt} for Nakayama algebras, now using $\tau$-tilting modules. Let $A$ be a Nakayama algebra. The first step will be to compare the torsion classes and the wide subcategories in $A\mbox{-}mod$.
By \cite{IT} (Proposition 2.12), we know that for any $\Tcal$ in $tors(A)$ the subcategory 
$$\alpha(\Tcal):=\{X\in\Tcal\mid \forall(g:Y\rightarrow X)\in\Tcal,\, ker(g)\in\Tcal\}$$
forms an exact abelian and extension-closed, thus wide, subcategory of $A\mbox{-}mod$. Note that, in contrast to the hereditary case (see Theorem \ref{Thm TI}), the split-projective $A$-modules in $\Tcal$ do not necessarily belong to $\alpha(\Tcal)$.  
We want to show that $\alpha$ yields a bijection between
$$tors(A)\longrightarrow wide(A).$$
We have to construct an inverse to $\alpha$. Let $\Ccal$ be a wide subcategory of $A\mbox{-}mod$. Note that, again in contrast to the hereditary case, $Gen\Ccal$ is not, in general, closed under extensions. Consequently, we set
$$\beta(\Ccal):=add\{X\in A\mbox{-}ind\mid X\,\,\text{is an extension of modules in}\,\,\,Gen\Ccal\}.$$
Using Lemma \ref{Nakext}, one can check that $\beta(\Ccal)$ describes precisely the subcategory of $A\mbox{-}mod$ containing \textit{all} modules that can be written as an extension of modules in $Gen\Ccal$. The next lemma justifies the definition.

\begin{lemma}\label{torsion}
Let $\Ccal$ be a wide subcategory and $\Tcal$ be a torsion class in $A\mbox{-}mod$. Then the following holds.
\begin{enumerate}
\item $\beta(\Ccal)$ is the smallest torsion class in $A\mbox{-}mod$ containing $\Ccal$.
\item $\alpha(\Tcal)=\alpha(\Tcal)_{ind}:=\{X\in\Tcal\mid \forall(g:Y\rightarrow X)\in\Tcal\,\text{and}\,\, Y\, \text{indecomposable},\, ker(g)\in\Tcal\}$.
\item Every split-projective $A$-module in $\Tcal$ admits a quotient in $\alpha(\Tcal)$.
\end{enumerate}
\end{lemma}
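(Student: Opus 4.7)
For part (1), the plan is to verify that $\beta(\Ccal)$ is a torsion class containing $\Ccal$; minimality is then automatic since any torsion class containing $\Ccal$ contains $Gen\,\Ccal$ (by closure under quotients) and hence every two-step extension of modules in $Gen\,\Ccal$. Containment is the trivial extension $0\to 0\to X\to X\to 0$. For closure under quotients, given $Y\in\beta(\Ccal)$ with $0\to Y_1\to Y\to Y_2\to 0$ and $Y_i\in Gen\,\Ccal$, any quotient $Y\twoheadrightarrow Q$ inherits a short exact sequence $0\to Q_1\to Q\to Q_2\to 0$ with $Q_1$ the image of $Y_1$ and $Q_2$ a quotient of $Y_2$, both in $Gen\,\Ccal$. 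For closure under extensions, given $0\to U\to V\to W\to 0$ with $U,W\in\beta(\Ccal)$, the plan is to pull back the $Gen\,\Ccal$-filtration of $W$ to $V$ to produce a four-step filtration of $V$ with successive quotients in $Gen\,\Ccal$, then collapse the intermediate two steps using Lemma \ref{Nakext}: in a Nakayama algebra, an extension of two indecomposables in $Gen\,\Ccal$ either splits or has indecomposable middle term built as a simultaneous subobject/quotient of the two terms, so the collapse to a two-step filtration by $Gen\,\Ccal$ is controlled summand-by-summand.

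For part (2), one inclusion is by definition. For the converse, given $X\in\alpha(\Tcal)_{ind}$ and $g:Y\to X$ with $Y\in\Tcal$, decompose $Y=\bigoplus_{i=1}^{n}Y_i$ into indecomposable summands (each in $\Tcal$ as a quotient of $Y$) and proceed by induction on $n$. Writing $\tilde g:\bigoplus_{i<n}Y_i\to X$ and $g_n:Y_n\to X$, the kernels fit into
$$0\longrightarrow ker(g_n)\longrightarrow ker(g)\longrightarrow \tilde g^{-1}(Im(g_n))\longrightarrow 0,$$
and $\tilde g^{-1}(Im(g_n))$ itself fits into
$$0\longrightarrow ker(\tilde g)\longrightarrow \tilde g^{-1}(Im(g_n))\longrightarrow Im(\tilde g)\cap Im(g_n)\longrightarrow 0.$$
By the inductive hypothesis and the hypothesis on $X$, $ker(g_n)$ and $ker(\tilde g)$ are in $\Tcal$, and $\Tcal$ is closed under extensions, so the task reduces to proving $Im(\tilde g)\cap Im(g_n)\in\Tcal$. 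Here the plan exploits uniseriality: $Y_n$ is uniserial (Proposition \ref{Nakayama}), hence so is $Im(g_n)$, and therefore the submodule $Im(\tilde g)\cap Im(g_n)$ equals $rad^{t}\,Im(g_n)$ for a suitable $t$, so it is the image of the composition $rad^{t}Y_n\hookrightarrow Y_n\xrightarrow{g_n}X$. Since $rad^{t}Y_n$ is indecomposable and lies in $\Tcal$ (it is a quotient of $Y_n$), the indecomposable hypothesis on $X$ forces the kernel of this composition into $\Tcal$, which together with extension-closure yields the claim.

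For part (3), the plan is to reduce to $P$ indecomposable (each indecomposable summand of a split-projective is itself split-projective in $\Tcal$), so that $P$ is uniserial by Proposition \ref{Nakayama}. The quotients of $P$ then form the finite chain $P\to P/rad\,P\to\cdots\to 0$, all in $\Tcal$ by quotient-closure. The plan is to pick the quotient $Q:=P/rad^{k}P$ of largest possible length $l(P)-k$ that still lies in $\alpha(\Tcal)$ — by part (2), this condition only needs to be checked against indecomposable sources — and argue $Q$ is non-zero using split-projectivity: if every non-zero proper quotient of $P$ failed the $\alpha(\Tcal)$-condition through some witness morphism, one could assemble these witnesses into a surjection onto $P$ in $\Tcal$ whose non-splitting would contradict the split-projective hypothesis on $P$.

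The main obstacle is the step in part (2) where $Im(\tilde g)\cap Im(g_n)$ must be realised as coming from an indecomposable morphism into $X$; this is exactly where uniseriality of indecomposables over a Nakayama algebra is essential and where the argument has no direct hereditary analogue. Part (1) is a routine application of Lemma \ref{Nakext} once one commits to decomposing the middle term summand-by-summand, and part (3) is a fairly direct use of the uniserial chain of quotients together with split-projectivity. As foreshadowed in the paragraph preceding the lemma, the whole difficulty stems from the fact that $Gen\,\Ccal$ need not be extension-closed in the Nakayama setting, forcing the use of $\beta$ in the first place.
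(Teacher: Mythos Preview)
Your outline has real gaps in parts (1) and (2), and part (3) is too vague to constitute an argument.

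\textbf{Part (1).} Closure under quotients is fine, but your claim that extension closure is ``a routine application of Lemma~\ref{Nakext}'' is not borne out. Even after reducing to $X_1,X_2$ indecomposable and writing $Y_1\oplus Y_2$ as in Lemma~\ref{Nakext}, showing that the long summand $Y_1$ lies in $\beta(\Ccal)$ is the heart of the matter, and your ``collapse the middle two steps'' sketch does not do it: the middle extension of two $Gen\Ccal$-modules need not lie in $Gen\Ccal$, and nothing in your sketch uses that $\Ccal$ is \emph{wide}. The paper's proof first extracts a structural fact (property~$(*)$): for indecomposable $X\in\beta(\Ccal)$, either $X\in Gen\Ccal$ or in any witnessing sequence $0\to Y\to X\to Z\to 0$ the quotient $Z$ already lies in $\Ccal$ itself (using that $\Ccal$ is closed under cokernels). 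The extension-closure argument then splits into two cases according to whether $cok(j)\in Gen\Ccal$, and in each case one has to exploit $(*)$ and closure of $\Ccal$ under kernels, cokernels and extensions to force the relevant module into $\Ccal$. None of this is visible in your sketch; the wideness hypothesis is doing essential work that your four-step-filtration picture ignores.

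\textbf{Part (2).} Your inductive set-up and the two short exact sequences are correct, but the step ``$rad^tY_n$ \dots\ lies in $\Tcal$ (it is a quotient of $Y_n$)'' is wrong: $rad^tY_n$ is a \emph{submodule} of $Y_n$, and torsion classes are not closed under submodules. The fix is to first reduce to $X$ indecomposable --- $\alpha(\Tcal)_{ind}$ is closed under direct summands --- as the paper does. Once $X$ is uniserial its submodules are totally ordered, so $Im(\tilde g)\cap Im(g_n)$ equals one of $Im(\tilde g)$ or $Im(g_n)$, each of which is a genuine quotient of a module in $\Tcal$. With this reduction your induction goes through; alternatively, the paper bypasses the induction entirely by using Lemma~\ref{Nakext} to put a non-split $g$ into the explicit two-summand shape and reading off the kernel directly.

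\textbf{Part (3).} ``Assemble these witnesses into a surjection onto $P$'' is not an argument. The paper instead shows the contrapositive cleanly: if $GenT\cap\alpha(\Tcal)=\{0\}$ for indecomposable split-projective $T$, one checks inductively (on length) that every proper submodule of $T$ lies outside $\Tcal$; then any $g:Y\to T$ with $Y\in\Tcal$ has image either $0$ or $T$, and in the surjective case split-projectivity gives $ker(g)\in\Tcal$ as a summand. This forces $T\in\alpha(\Tcal)$, a contradiction.
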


\begin{proof}
ad(1): We will first show that $\beta(\Ccal)$ is closed under quotients. Thus, take $X$ in $\beta(\Ccal)$ and a surjection $f:X\twoheadrightarrow X'$ in $A\mbox{-}mod$. We have to show that $X'$ belongs to $\beta(\Ccal)$. We can assume that $X$ and $X'$ are indecomposable.
Consider the short exact sequence
$$\xymatrix{0\ar[r]& Y\ar[r]^i & X\ar[r]^{\pi} & Z\ar[r] & 0}$$
where $Y$ and $Z$ are indecomposable $A$-modules in $Gen\Ccal$. If $f$ factors through $\pi$, $X'$ belongs to $Gen\Ccal\subseteq\beta(\Ccal)$, since $Gen\Ccal$ is closed under quotients. Otherwise, since every indecomposable $A$-module is uniserial, $\pi$ factors through $f$ and we can consider the following commutative diagram of indecomposable $A$-modules
$$\xymatrix{ & 0\ar[d] & & & \\ & ker(f\circ i)\ar[d]^{i'}& & & \\ 0\ar[r]& Y\ar[r]^i\ar[d] & X\ar[r]^{\pi}\ar[d]^f & Z\ar[r]\ar[d]^{id} & 0 \\ 0\ar[r]& cok(i')=ker(\pi')\ar[r] & X'\ar[r]^{\pi '} & Z\ar[r] & 0}$$
Since $Gen\Ccal$ is closed under quotients, $ker(\pi ')$ belongs to $Gen\Ccal$ and $X'$ can be written as an extension of modules in $Gen\Ccal$. Hence, $X'$ lies in $\beta(\Ccal)$.

Next, we want to see that $\beta(\Ccal)$ is closed under extensions. We start with a general statement about the structure of a module in $\beta(\Ccal)$. This observation will be crucial in the actual proof afterwards. Let $X$ be an indecomposable $A$-module in $\beta(\Ccal)$ together with a short exact sequence
$$\xymatrix{0\ar[r]& Y\ar[r]^i & X\ar[r]^{\pi} & Z\ar[r] & 0}$$
where $Y$ and $Z$ are again indecomposable $A$-modules in $Gen\Ccal$. Let $C_Y$ and $C_Z$ be indecomposable $A$-modules in $\Ccal$ surjecting onto $Y$ and $Z$, respectively. Since every indecomposable $A$-module is uniserial, either the $A$-module $X$ belongs to $Gen\Ccal$ or the map $\pi$ factors through $C_Z$ such that $Z$ belongs to $\Ccal$, as the cokernel of the induced map from $C_Y$ to $C_Z$. We call this property $(*)$.

Now let $X_1$ and $X_2$ be two indecomposable $A$-modules in $\beta(\Ccal)$. By Lemma \ref{Nakext}, a non-trivial extension of these two modules is of the form
$$\xymatrix{& & V\ar@{>>}[dr] & & \\ 0\ar[r] & X_1\ar@{^{(}->}[ur]^j\ar@{>>}[dr] & & X_2\ar[r] & 0.\\ & & W\ar@{^{(}->}[ur]^k & &}$$
Since $\beta(\Ccal)$ is closed under quotients, $W$ and $cok(j)=cok(k)$ belong to $\beta(\Ccal)$. It suffices to show that $V$ belongs to $\beta(\Ccal)$. We will consider two different cases with respect to the following short exact sequence

$$\xymatrix{0\ar[r]& X_1\ar[r]^j & V\ar[r] & cok(j)\ar[r] & 0.}$$

\underline{Case1}: Assume that $cok(j)$ lies in $Gen\Ccal$.
First of all, if also $X_1$ belongs to $Gen\Ccal$, we are done by the definition of $\beta(\Ccal)$. Otherwise, by the property $(*)$, we get a short exact sequence of the form

$$\xymatrix{0\ar[r]& Y\ar[r] & X_1\ar[r] & Z\ar[r] & 0}$$
with $Y$ indecomposable in $Gen\Ccal$ and $Z$ indecomposable in $\Ccal$, yielding the following induced exact sequence

$$\xymatrix{0\ar[r]& Y\ar[r]^{j'} & V\ar[r] & cok(j')\ar[r] & 0.}$$
If now $cok(j')$ belongs to $Gen\Ccal$, we are done by the definition of $\beta(\Ccal)$. Otherwise, using that $cok(j)$ lies in $Gen\Ccal$, there is an indecomposable $A$-module $V'$ in $\Ccal$, fitting into the following commutative diagram

$$\xymatrix{0\ar[r]& X_1\ar[r]^j\ar[d] & V\ar[r]\ar[d] & cok(j)\ar[r]\ar[d]_{id} & 0 \\ 0\ar[r]&  Z\ar[r] & cok(j')\ar[r]\ar[d] & cok(j)\ar[r] & 0\\ & & V'\ar[ur]\ar[d] & &\\ & & 0 & &}$$
Consequently, $cok(j)$ equals the cokernel of the induced map from $Z$ to $V'$ and, thus, it belongs to $\Ccal$. Since $\Ccal$ is closed under extensions, this also forces $cok(j')$ to lie in $\Ccal$, leading to a contradiction. 

\underline{Case2}: Assume that $cok(j)$ does not lie in $Gen\Ccal$. By the property $(*)$, there is a short exact sequence

$$\xymatrix{0\ar[r]& Y_j\ar[r] & cok(j)\ar[r] & C_{j}\ar[r] & 0}$$
with $Y_j$ indecomposable in $Gen\Ccal$ and $C_{j}$ indecomposable in $\Ccal$, yielding the following commutative diagram

$$\xymatrix{0\ar[r]& ker(s)\ar[r]\ar[d]^{p} & V\ar[r]^{s}\ar[d] & C_j\ar[r]\ar[d]^{id} & 0 \\ 0\ar[r]&  Y_j\ar[r] & cok(j)\ar[r] & C_j\ar[r] & 0}$$
with surjective vertical morphisms.
Now either $ker(s)$ belongs to $Gen\Ccal$ and, thus, $V$ lies in $\beta(\Ccal)$, as wanted, or, the map $p$ must factor through some indecomposable $A$-module $C_{Y_j}$ in $\Ccal$, since $Y_j$ belongs to $Gen\Ccal$. In the second case, we get a short exact sequence of the form

$$\xymatrix{0\ar[r]& C_{Y_j}\ar[r] & E\ar[r] & C_j\ar[r] & 0}$$
where $E$ is an indecomposable $A$-module in $\Ccal$ surjecting onto $cok(j)$. This contradicts our assumption that $cok(j)$ does not lie in $Gen\Ccal$. Consequently, $\beta(\Ccal)$ forms a torsion class in $A\mbox{-}mod$.
Moreover, by construction, $\beta(\Ccal)$ is the smallest torsion class in $A\mbox{-}mod$ containing $\Ccal$.

ad(2): Clearly, we have $\alpha(\Tcal)\subseteq\alpha(\Tcal)_{ind}$. Conversely, take $X$ in $\alpha(\Tcal)_{ind}$, $Y$ in $\Tcal$ and a map $g:Y\rightarrow X$. We have to show that $ker(g)$ belongs to $\Tcal$. To begin with, we can assume $X$ to be indecomposable, since $\alpha(\Tcal)_{ind}$ is closed under direct summands. In particular, the image of $g$ is an indecomposable $A$-module. Moreover, without loss of generality, we can assume that also the kernel of $g$ is indecomposable and that $g$ is not a split map. It follows, by Lemma \ref{Nakext}, that $Y=Y_1\oplus Y_2$ for $Y_1$ and $Y_2$ indecomposable and that $g$ is induced by $(\pi,i)$, like in the following commutative diagram of indecomposable $A$-modules
$$\xymatrix{ & & Y_1\ar@{>>}[dr]^{\pi} & \\ & ker(g)\ar@{^{(}->}[ur]\ar@{>>}[dr] &  & Im(g)\\ ker(\pi)\ar@{^{(}->}[ur]^{i'} & & Y_2=cok(i')\ar@{^{(}->}[ur]^i &}$$
Since $X$ lies in $\alpha(\Tcal)_{ind}$, it follows that $ker(\pi)$ belongs to $\Tcal$ and, thus, $ker(g)$ can be written as an extension of modules in $\Tcal$. Therefore, $ker(g)$ belongs to $\Tcal$.

ad(3): It is enough to show the statement for an indecomposable split-projective $A$-module $T$ in $\Tcal$. If $T$ belongs to $\alpha(\Tcal)$, we are done. Now assume that $T\notin\alpha(\Tcal)$. Suppose that $GenT\cap\alpha(\Tcal)=\{0\}$. Consequently, one can check inductively that all submodules of $T$ in $A\mbox{-}mod$ do not belong to $\Tcal$. Hence, maps to $T$ in $\Tcal$ are trivial such that $T$ lies in $\alpha(\Tcal)$, a contradiction.
\end{proof}

The following proposition establishes the wanted bijection.

\begin{proposition}\label{torsNak}
Let $A$ be a Nakayama algebra. There is a bijection between
$$tors(A)\longrightarrow wide(A)$$
by mapping a torsion class $\Tcal$ to $\alpha(\Tcal)$. The inverse of $\alpha$ is given by $\beta$.
\end{proposition}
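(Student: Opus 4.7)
The plan is to verify the four conditions needed for the bijection: that $\alpha(\Tcal)$ is wide, that $\beta(\Ccal)$ is a torsion class, and that $\alpha$ and $\beta$ are mutually inverse. The first is granted by \cite[Proposition 2.12]{IT}, and the second is Lemma \ref{torsion}(1), so only the two composition identities remain.

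For $\beta(\alpha(\Tcal))=\Tcal$, the inclusion $\beta(\alpha(\Tcal))\subseteq\Tcal$ is immediate, since $\alpha(\Tcal)\subseteq\Tcal$ implies $Gen\alpha(\Tcal)\subseteq\Tcal$ (as $\Tcal$ is closed under quotients) and extensions of such modules remain in $\Tcal$. For the reverse inclusion, I would argue by induction on the Loewy length of an indecomposable $X\in\Tcal$. Choose a split-projective $T$ in $\Tcal$ surjecting onto $X$; by Lemma \ref{torsion}(3), $T$ admits a quotient $T\twoheadrightarrow X'$ with $X'\in\alpha(\Tcal)$. Using uniseriality of $T$ (Proposition \ref{Nakayama}), the surjections $T\twoheadrightarrow X$ and $T\twoheadrightarrow X'$ factor through each other: either $X$ is a quotient of $X'$, hence already in $Gen\alpha(\Tcal)\subseteq\beta(\alpha(\Tcal))$, or $X$ surjects onto a nontrivial quotient in $Gen\alpha(\Tcal)$ with kernel a strictly shorter indecomposable in $\Tcal$, to which the induction hypothesis applies, exhibiting $X$ as an extension of modules in $\beta(\alpha(\Tcal))$.

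For $\alpha(\beta(\Ccal))=\Ccal$, I would use the structural property $(*)$ of indecomposables of $\beta(\Ccal)$ established during the proof of Lemma \ref{torsion}(1): such an indecomposable either lies in $Gen\Ccal$ or fits in a short exact sequence $0\to Y\to X\to Z\to 0$ with $Y\in Gen\Ccal$ and $Z\in\Ccal$. For the inclusion $\Ccal\subseteq\alpha(\beta(\Ccal))$, pick $C\in\Ccal$ and, thanks to Lemma \ref{torsion}(2), a morphism $g:Y\to C$ with $Y$ indecomposable in $\beta(\Ccal)$; applying $(*)$ to $Y$ and tracking the induced map into $C$ via the wideness of $\Ccal$ and Lemma \ref{Nakext} presents $ker(g)$ as an extension of a $\Ccal$-kernel by a module in $Gen\Ccal$, hence as an object of $\beta(\Ccal)$. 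For the reverse inclusion, by Proposition \ref{Prop Nak} it suffices to show that the simples of the wide subcategory $\alpha(\beta(\Ccal))$ lie in $\Ccal$; such a simple $X$ is an indecomposable of $\beta(\Ccal)$ all of whose $\beta(\Ccal)$-subobjects are trivial, and applying $(*)$ together with uniseriality forces $X$ either to coincide with a $\Ccal$-simple or to be squeezed between elements of $\Ccal$ in a way that identifies it with an object of $\Ccal$.

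The main obstacle is the inclusion $\alpha(\beta(\Ccal))\subseteq\Ccal$: the description of $\beta(\Ccal)$ as all extensions of objects of $Gen\Ccal$ is rather loose, and one must combine the abelian-kernel closure condition defining $\alpha$ with the rigidity supplied by uniseriality (Proposition \ref{Nakayama}) and the restricted form of extensions in $A\mbox{-}mod$ (Lemma \ref{Nakext}) to cut $\alpha(\beta(\Ccal))$ back down to the wide subcategory $\Ccal$ exactly.
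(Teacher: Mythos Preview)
Your proposal is correct. For $\beta(\alpha(\Tcal))=\Tcal$ and for the inclusion $\Ccal\subseteq\alpha(\beta(\Ccal))$ it follows the paper's line closely; the paper restricts the first argument to split-projectives and runs it iteratively rather than by induction on Loewy length, but the mechanism is identical. Your sketch for $\Ccal\subseteq\alpha(\beta(\Ccal))$ is terse: as in the paper, a case split is needed on whether the surjection $Y\twoheadrightarrow Z$ coming from $(*)$ factors through $Im(g)$, but both branches go through as you indicate.

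The genuine difference lies in the inclusion $\alpha(\beta(\Ccal))\subseteq\Ccal$. The paper takes an arbitrary indecomposable $X\in\alpha(\beta(\Ccal))$ and treats separately the cases $X\in Gen\Ccal$ and $X\notin Gen\Ccal$; the second case requires an iterated reduction using $(*)$ and an auxiliary diagram. Your reduction to the simples of $\alpha(\beta(\Ccal))$ bypasses that second case entirely: once one checks that every $\beta(\Ccal)$-subobject of an object of $\alpha(\beta(\Ccal))$ again lies in $\alpha(\beta(\Ccal))$ (immediate from the definition of $\alpha$, since composing with a monomorphism preserves kernels), a simple $X$ of $\alpha(\beta(\Ccal))$ can have no proper nonzero $\beta(\Ccal)$-subobject, and the short exact sequence defining membership in $\beta(\Ccal)$ then forces $X\in Gen\Ccal$, landing you directly in the paper's easier case. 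This is a real economy. Two small points deserve to be made explicit: the closure of $\alpha(\Tcal)$ under $\Tcal$-subobjects is used but not stated, and your appeal to Proposition~\ref{Prop Nak} is really the fact that a wide subcategory of $A\mbox{-}mod$ is a length category and hence is generated under extensions by its simples; this is elementary, but Proposition~\ref{Prop Nak} as stated only records the resulting bijection.
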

\begin{proof}
We will first show that for $\Ccal$ in $wide(A)$ we have $\alpha(\beta(\Ccal))=\Ccal$.

ad$"\supseteq"$: Take $C$ in $\Ccal$ indecomposable, $X$ in $\beta(\Ccal)$ and a map $f:X\rightarrow C$. We have to check that $ker(f)$ lies in $\beta(\Ccal)$.
Using Lemma \ref{torsion}(2), we can assume that $X$ is indecomposable. First of all, if $X$ belongs to $Gen\Ccal$, we are done, since there is an indecomposable $A$-module $C_X$ in $\Ccal$ surjecting onto $X$ such that the kernel of the induced map from $C_X$ to $C$ forces the kernel of $f$ to lie in $Gen\Ccal\subseteq\beta(\Ccal)$. Otherwise, by the property $(*)$ in the proof of Lemma \ref{torsion}(1), we have a short exact sequence of the form
$$\xymatrix{0\ar[r]& Y\ar[r] & X\ar[r]^{\pi} & Z\ar[r] & 0}$$
with $Y$ indecomposable in $Gen\Ccal$ and $Z$ indecomposable in $\Ccal$. First assume that $\pi$ factors through $Im(f)$. Consequently, we get the following commutative diagram of indecomposable $A$-modules
$$\xymatrix{ & C_Y\ar[r]^g\ar@{>>}[d] & X\ar[r]^f\ar@{>>}[d] & C\\ ker(f\circ g)\ar@{^{(}->}[ur]\ar@{>>}[d] & Y\ar@{^{(}->}[ur] & Im(f)\ar@{>>}[d]\ar@{^{(}->}[ur] & \\ ker(f)\ar@{^{(}->}[ur] & & Z &}$$
It follows that $ker(f)$ belongs to $Gen\Ccal\subseteq\beta(\Ccal)$.
Otherwise, we get the following commutative diagram
$$\xymatrix{ & & X\ar@{>>}[d]^{\pi} \\ & ker(f)\ar@{^{(}->}[ur]\ar@{>>}[d] & Z\ar@{>>}[d]^{\pi'} \\ Y\ar@{^{(}->}[ur] & ker(\pi')\ar@{^{(}->}[ur] & Im(f) }$$
Since the kernel of $\pi'$ belongs to $\Ccal$, as the kernel of the induced map from $Z$ to $C$, it follows that $ker(f)$ is an extension of modules in $Gen\Ccal$ and, thus, it lies in $\beta(\Ccal)$, completing the argument.

ad$"\subseteq"$: Take $X$ in $\alpha(\beta(\Ccal))$ indecomposable and show that it belongs to $\Ccal$.
We first assume that $X$ lies in $Gen\Ccal$, getting the following short exact sequence
$$\xymatrix{0\ar[r]& ker(\pi)\ar[r] & C\ar[r]^{\pi} & X\ar[r] & 0}$$
where $C$ in $\Ccal$ is indecomposable and $ker(\pi)$ belongs to $\beta(\Ccal)$, by assumption. Thus, by the definition of $\beta(\Ccal)$, there is an indecomposable $A$-module $C_{\pi}$ in $\Ccal$ mapping non-trivially to $ker(\pi)$ and, hence, yielding an induced map $g:C_{\pi}\rightarrow C$. Then $\pi$ factors through the cokernel of $g$, which again belongs to $\Ccal$. By repeating the argument with $cok(g)$ instead of $C$, we get, after finitely many steps, that $X$ lies in $\Ccal$.

Now we assume that $X\notin Gen\Ccal$. Since $X$ lies in $\beta(\Ccal)$, we can use the property $(*)$ to get the diagram
\begin{equation}\label{diagram1}\xymatrix{0\ar[r]& Y\ar[r] & X\ar[r] & Z\ar[r] & 0\\ & C_Y\ar@{>>}[u]\ar[ur]_{\psi} & & &}\end{equation}
with $C_Y$ and $Z$ indecomposable in $\Ccal$ and $Y$ indecomposable in $Gen\Ccal$. Since $X$ lies in $\alpha(\beta(\Ccal))$, we get that $ker(\psi)$ belongs to $\beta(\Ccal)$. If $ker(\psi)$ lies, indeed, in $Gen\Ccal$, then the $A$-module $Y$ has to be in $\Ccal$, as the cokernel of a map between indecomposable $A$-modules in $\Ccal$. Thus, also $X$ lies in $\Ccal$, as an extension of modules in $\Ccal$, contradicting our assumption. Otherwise, if we assume that $ker(\psi)$ does not belong to $Gen\Ccal$, we can again use the property $(*)$  to get a similar commutative diagram as before
$$\xymatrix{0\ar[r]& Y_{\psi}\ar[r] & ker(\psi)\ar[r] & C_{\psi}\ar[r] & 0\\ & C_{Y_{\psi}}\ar@{>>}[u]\ar[ur]_{\psi'} & & &}$$
with $C_{\psi}$ and $C_{Y_{\psi}}$ indecomposable in $\Ccal$. By composition, we now get a new map $\phi:C_{Y_{\psi}}\rightarrow C_Y$ such that the morphism $\psi$ factors through $cok(\phi)$ in $\Ccal$. Therefore, we can replace $C_Y$ by $cok(\phi)$ in the diagram (\ref{diagram1})
$$\xymatrix{0\ar[r]& Y\ar[r] & X\ar[r] & Z\ar[r] & 0\\ & cok(\phi)\ar@{>>}[u]\ar[ur]_{\tilde{\psi}} & & &}$$
and repeat the whole argument. After finitely many steps, we conclude that $Y$ and, thus, also $X$ lies in $\Ccal$, again yielding a contradiction.

Next, we have to verify that for $\Tcal$ in $tors(A)$ we have $\beta(\alpha(\Tcal))=\Tcal$.

ad$"\supseteq"$: It suffices to show that all indecomposable split-projective modules in $\Tcal$ belong to $\beta(\alpha(\Tcal))$.
Let $T$ in $\Tcal$ be indecomposable and split-projective. If $T$ belongs to $\alpha(\Tcal)$, we are done. Now assume that $T\notin\alpha(\Tcal)$. By Lemma \ref{torsion}(3), there is an indecomposable $A$-module $X$ in $\alpha(\Tcal)$ yielding the sequence
$$\xymatrix{0\ar[r]& ker(\pi)\ar[r] & T\ar[r]^{\pi} & X\ar[r] & 0}$$
with $ker(\pi)$ in $\Tcal$. If $ker(\pi)$ also belongs to $Gen(\alpha(\Tcal))$, we get that $T$ lies in $\beta(\alpha(\Tcal))$, by definition. Otherwise, if $ker(\pi)$ is not in $Gen(\alpha(\Tcal))$, we can deduce from Lemma \ref{torsion}(3) that there must be an indecomposable $A$-module $X'$ in $\alpha(\Tcal)$ yielding the short exact sequence
$$\xymatrix{0\ar[r]& ker(\pi')\ar[r] & ker(\pi)\ar[r]^-{\pi'} & X'\ar[r] & 0}$$
where $ker(\pi')$ lies in $\Tcal$. If now $ker(\pi')$ also belongs to $Gen(\alpha(\Tcal))$, we get that $ker(\pi)$ is in $\beta(\alpha(\Tcal))$ and, hence, $T$ lies in the torsion class $\beta(\alpha(\Tcal))$. Otherwise, we can repeat the previous argument, until, after finitely many steps, the corresponding kernel must belong to $Gen(\alpha(\Tcal))$. This finishes the argument.

ad$"\subseteq"$: The inclusion holds, since $\alpha(\Tcal)\subseteq\Tcal$ and $\beta(\alpha(\Tcal))$ is, by construction, the smallest torsion class in $A\mbox{-}mod$ containing $\alpha(\Tcal)$, see Lemma \ref{torsion}(1).
\end{proof}

\begin{corollary}\label{Nak bij}
Let $A$ be a Nakayama algebra. There are bijections between the following sets:
\begin{enumerate}
\item isomorphism classes of basic support $\tau$-tilting $A$-modules;
\item torsion classes in $A\mbox{-}mod$;
\item wide subcategories in $A\mbox{-}mod$;
\item isomorphism classes of orthogonal collections in $A\mbox{-}mod$;
\item epiclasses of universal localisations of $A$.
\end{enumerate}
\end{corollary}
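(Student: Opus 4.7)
The plan is to assemble the bijections by citing the previously established results, since the substantive work has already been carried out in Propositions \ref{Prop Nak}, \ref{torsNak}, Theorem \ref{Main Nak}, and Corollary \ref{locNak}, together with the Adachi--Iyama--Reiten correspondence in Theorem \ref{AIR1}.

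First I would invoke Theorem \ref{AIR1} to get the bijection $(1)\leftrightarrow(2)$ between basic support $\tau$-tilting $A$-modules and finitely generated torsion classes of $A\mbox{-}mod$. Since a Nakayama algebra $A$ is representation finite by Proposition \ref{Nakayama}, every torsion class $\Tcal$ in $A\mbox{-}mod$ is automatically finitely generated (one may take as finite generator the direct sum of a representative for each isomorphism class of indecomposables in $\Tcal$), so $f\mbox{-}tors(A)=tors(A)$ and the bijection of Theorem \ref{AIR1} yields exactly $(1)\leftrightarrow(2)$.

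Next I would establish $(2)\leftrightarrow(3)$ by invoking Proposition \ref{torsNak}, which for a Nakayama algebra identifies $tors(A)$ with $wide(A)$ via the maps $\alpha$ and $\beta$. The bijection $(3)\leftrightarrow(4)$ between wide subcategories and isomorphism classes of orthogonal collections in $A\mbox{-}mod$ is Proposition \ref{Prop Nak}, realised by sending a wide subcategory $\Ccal$ to its set of $\Ccal$-simple objects. Finally, the bijection $(4)\leftrightarrow(5)$ between orthogonal collections and epiclasses of universal localisations of $A$ is precisely Theorem \ref{Main Nak}, and this is consistent with $(3)\leftrightarrow(5)$ obtained from Corollary \ref{locNak} via the map $\omega$ sending $A_{\Sigma}$ to $\Xcal_{A_{\Sigma}}=\Sigma^*$.

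Since there is no genuine obstacle left — each arrow in the pentagon of sets has already been supplied — the only task in writing out the proof is to check that the cycle of bijections commutes in the way one expects. This is automatic from the constructions: a support $\tau$-tilting module $T$ corresponds under Theorem \ref{AIR1} to $GenT$, which under $\alpha$ from Proposition \ref{torsNak} yields the wide subcategory $\alpha(GenT)$, whose $\alpha(GenT)$-simple objects by Proposition \ref{Prop Nak} form an orthogonal collection, and this collection in turn determines the epiclass of the universal localisation $A_{\,^*\!\alpha(GenT)}$ by Theorem \ref{Main Nak} and Corollary \ref{locNak}. Thus the corollary follows by a direct chaining of the five cited results, with no additional calculation required.
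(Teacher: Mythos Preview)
Your proposal is correct and follows essentially the same approach as the paper's proof: both simply chain together Theorem \ref{AIR1} for $(1)\leftrightarrow(2)$, Proposition \ref{torsNak} for $(2)\leftrightarrow(3)$, and Proposition \ref{Prop Nak} with Theorem \ref{Main Nak} for $(3)\leftrightarrow(4)\leftrightarrow(5)$. Your extra remark that $f\mbox{-}tors(A)=tors(A)$ because $A$ is representation finite is a helpful clarification that the paper leaves implicit, and your final paragraph on commutativity of the cycle is not needed for the statement as given (which only asserts the existence of bijections) but does no harm.
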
 

\begin{proof}
The bijection between (1) and (2) follows from Theorem \ref{AIR1}. The correspondences between (3), (4) and (5) are given by Proposition \ref{Prop Nak} and Theorem \ref{Main Nak}. Finally, Proposition \ref{torsNak} finishes the proof.
\end{proof}

\begin{remark}
The presented list of bijections can be extended taking into account further results in \cite{AIR} and \cite{BY}. For example, there are correspondences between support $\tau$-tilting modules and certain silting or cluster tilting objects, (co-)t-structures and g-matrices for a given finite dimensional algebra. Nevertheless, in order to keep notation low, it is convenient for us to focus on the presented objects in the corollary above. We refer to the literature for further directions. 
\end{remark}

In what follows, we explore the correspondence between the support $\tau$-tilting modules and the universal localisations of $A$.

\begin{theorem}\label{Naktilt}
Let $A$ be a Nakayama algebra. 
\begin{enumerate}
\item There is a bijection
$$\Psi_A:s\tau\mbox{-}tilt(A)\longrightarrow uniloc(A)$$
by mapping a support $\tau$-tilting $A$-module $T$ to $A_{A_{\Sigma_T}}\!\!:=A_{\,^*(\alpha(GenT))}$. The inverse is given by mapping a universal localisation $A_{\Sigma}$ to $T_\Sigma$, the sum of the indecomposable Ext-projectives in $\beta(\Sigma^*)$.
\item $\Psi_A$ restricts to a bijection between
$$\tau\mbox{-}tilt(A)\longrightarrow uniloc^p(A).$$
\item $\Psi_A$ restricts to a bijection between
$$s\tau\mbox{-}tilt(A/AeA)\longrightarrow uniloc_e(A)$$
for an idempotent $e$ in $A$. In particular, if $T$ is equivalent to $_A(A/AeA)$, it is mapped to $A_{\Sigma_T}=A/AeA$.
\end{enumerate}
\end{theorem}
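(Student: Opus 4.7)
The plan is to derive all three statements by composing bijections that are already established: Theorem \ref{AIR1} gives $s\tau\mbox{-}tilt(A)\leftrightarrow tors(A)$, Proposition \ref{torsNak} gives $tors(A)\leftrightarrow wide(A)$, and Corollary \ref{locNak} gives $wide(A)\leftrightarrow uniloc(A)$. For (1), I would simply chase a basic support $\tau$-tilting module through the composite
$$s\tau\mbox{-}tilt(A)\xrightarrow{Gen} tors(A)\xrightarrow{\alpha} wide(A)\xrightarrow{\omega^{-1}} uniloc(A),$$
obtaining $T\mapsto \alpha(GenT)\mapsto A_{\,^*\alpha(GenT)}=A_{\Sigma_T}$, and verify that running it backwards sends $A_\Sigma$ first to $\Xcal_{A_\Sigma}=\Sigma^*$, then to the torsion class $\beta(\Sigma^*)$, and finally to the sum $T_\Sigma$ of its indecomposable Ext-projectives via the inverse bijection of Theorem \ref{AIR1}. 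Since $A$ is representation finite, everything in sight is finitely generated, so no finiteness issue arises.

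The heart of (3) is the equivalence
$$A_{\Sigma_T}\otimes_A Ae=0 \Longleftrightarrow T\in\Xcal_{A/AeA}.$$
My plan is to combine two observations. First, since the minimal projective presentation of $Ae$ is $0\to Ae$, Proposition \ref{prop uniloc}(1) identifies $Ae\in\,^*\!\Xcal_{A_{\Sigma_T}}$ with $A_{\Sigma_T}\otimes_A Ae=0$, while by definition $Ae\in\,^*\!\Ccal$ means precisely $Hom_A(Ae,\Ccal)=0$, i.e., $\Ccal\subseteq\Xcal_{A/AeA}$. Second, $\Xcal_{A/AeA}$ is itself a torsion class in $A\mbox{-}mod$: being the module category of a quotient ring it is closed under quotients, while closure under extensions follows from the identity $e^2=e$. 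Since $\beta(\alpha(GenT))=GenT$ is, by Lemma \ref{torsion}(1), the smallest torsion class containing $\alpha(GenT)=\Xcal_{A_{\Sigma_T}}$, the inclusion $\Xcal_{A_{\Sigma_T}}\subseteq \Xcal_{A/AeA}$ is equivalent to $GenT\subseteq\Xcal_{A/AeA}$, hence to $T\in\Xcal_{A/AeA}$. Combined with the routine fact that a support $\tau$-tilting $A$-module which is also an $A/AeA$-module is automatically support $\tau$-tilting over $A/AeA$ (lift an idempotent from $A/AeA$ to $A$), and vice versa, this gives the claimed bijection. The special case $T=\,_A(A/AeA)$ is immediate because $GenT=\Xcal_{A/AeA}$ is already wide, so it coincides with $\alpha(GenT)$, and its associated universal localisation is the quotient $A\to A/AeA$.

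For (2), I would deduce the statement directly from (3). A support $\tau$-tilting module $T$ is $\tau$-tilting exactly when $|T|=|A|$, which forces $eT\neq 0$ for every nonzero idempotent $e$, hence $T\notin \Xcal_{A/AeA}$ for such $e$; conversely, if $T$ is support $\tau$-tilting but not $\tau$-tilting, the primitive idempotents absent from the support of $T$ assemble into a nonzero idempotent $e$ with $T\in\Xcal_{A/AeA}$. By the equivalence in (3), this translates exactly into the condition that $A_{\Sigma_T}\otimes_A Ae\neq 0$ for every nonzero $e$, i.e., $A_{\Sigma_T}\in uniloc^p(A)$.

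The step I expect to require most care is the passage between $\alpha(GenT)$ and $GenT$ through the closure operator $\beta$. Once Lemma \ref{torsion}(1) is available to guarantee that $\beta$ produces the smallest torsion class containing a given wide subcategory, and once $\Xcal_{A/AeA}$ is recognised as a torsion class, the three equivalences become formal consequences of already proved bijections; no new homological input is needed beyond what Sections \ref{UNIFDA} and \ref{NAKUNI} provide.
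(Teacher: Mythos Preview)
Your proof of (1) matches the paper's exactly: the composite of Theorem \ref{AIR1}, Proposition \ref{torsNak}, and Corollary \ref{locNak}.

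For (2) and (3), however, you take a genuinely different route. The paper proves (2) first by a direct argument: given a $\tau$-tilting (hence sincere) module $T$, it shows $Hom_A(P,\alpha(GenT))\neq 0$ for every indecomposable projective $P$ by an inductive length argument exploiting uniseriality and Lemma \ref{torsion}(3), and for the converse it uses the unit map $P\to A_\Sigma\otimes_A P$ to exhibit a nonzero map from $P$ into $T_\Sigma$. Only then does the paper deduce (3) as a formal consequence of the equivalence $Hom_A(Ae,T)=0\Leftrightarrow A_{\Sigma_T}\otimes_A Ae=0$ just established.

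You reverse the order: you prove (3) directly by the clean observation that $\Xcal_{A/AeA}$ is a torsion class, so Lemma \ref{torsion}(1) forces the two inclusions $\alpha(GenT)\subseteq\Xcal_{A/AeA}$ and $GenT\subseteq\Xcal_{A/AeA}$ to be equivalent; combined with Proposition \ref{prop uniloc}(1) this gives the key biconditional without any diagram-chasing. Then (2) follows from (3) via the characterisation of $\tau$-tilting modules as sincere support $\tau$-tilting modules. Your argument is shorter and more conceptual, and it isolates precisely the structural input needed (that $\beta$ produces the \emph{smallest} torsion class). The paper's approach, by contrast, extracts more concrete information along the way about how projectives map into $\alpha(GenT)$, which is in the spirit of the explicit comparisons developed later in Proposition \ref{propsilt} and Theorem \ref{Nak comp}. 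Both arguments rely on the same ``routine fact'' that a support $\tau$-tilting $A$-module annihilated by $e$ is support $\tau$-tilting over $A/AeA$, which neither proof spells out in detail.
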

\begin{proof}
ad(1): Follows from Theorem \ref{AIR1}, Proposition \ref{torsNak} and Corollary \ref{locNak}.

ad(2): Let $T$ be a basic $\tau$-tilting $A$-module. By \cite{AIR} (Proposition 2.2), $T$ is sincere such that $Hom_A(P,T)\not= 0$ for all indecomposable projective $A$-modules $P$. We have to show that $Hom_A(P,\alpha(GenT))\not= 0$ for all $P$. Now let $T'$ be an indecomposable direct summand of $T$, $P$ be an indecomposable projective $A$-module and $f:P\rightarrow T'$ be a non-trivial morphism. If $T'$ is in $\alpha(GenT)$, there is nothing to show. We distinguish cases with respect to the cokernel of $f$. If $cok(f)$ lies in $Gen(\alpha(GenT))$, we are done, keeping in mind that every indecomposable $A$-module is uniserial and $P$ is projective. Indeed, in the extremal case when $cok(f)$ already belongs to $\alpha(GenT)$, we know that $Im(f)$ lies in $GenT$ and, by Lemma \ref{torsion}(3), we get a surjection from $P$ to an indecomposable module in $\alpha(GenT)$.
As a consequence, again by Lemma \ref{torsion}(3), it remains to consider the case when there is an indecomposable $A$-module $X$ in $\alpha(GenT)$ yielding the following commutative diagram of indecomposable $A$-modules
$$\xymatrix{ P\ar@{>>}[dd]\ar[rr]^f\ar[dr]^{\tilde{f}} & & T'\ar@{>>}[d]^{\pi} \\ & ker(\tilde{\pi}\circ\pi)\ar@{^{(}->}[ur] & cok(f)\ar@{>>}[d]^{\tilde{\pi}}\\ Im(f)\ar@{^{(}->}[ur] & & X}$$
Since $X$ belongs to $\alpha(GenT)$, we know that $ker(\tilde{\pi}\circ\pi)$ lies in $GenT$. Certainly, if $ker(\tilde{\pi}\circ\pi)$ belongs to $Gen(\alpha(GenT))$, we are done, using that $P$ is projective. Otherwise, we can repeat the whole argument with $\tilde{f}$ instead of $f$. Since the length of the indecomposable $A$-module $ker(\tilde{\pi}\circ\pi)$ is smaller than the length of $T'$, after finitely many steps, we get that $Hom_A(P,\alpha(GenT))\not= 0$. Consequently, $P$ does not lie in $\,^*(\alpha(GenT))$ and we have $A_{\Sigma_T}\otimes_A P\not= 0$. It follows that the localisation $A_{\Sigma_T}$ is pure.

Conversely, let $A_{\Sigma}$ be a pure universal localisation of $A$ and let $P$ be an indecomposable projective $A$-module. Now consider the non-trivial $A$-module map
$$\phi_P: P\rightarrow A_{\Sigma}\otimes_A P.$$ 
Since $A_{\Sigma}\otimes_A P$ lies in $\Xcal_{A_{\Sigma}}$, there is a basic split-projective module $T_P$ in $\beta(\Xcal_{A_{\Sigma}})$ surjecting onto $A_{\Sigma}\otimes_A P$. Since $P$ is projective, $\phi_P$ factors through $T_P$ and we get a non-trivial map from $P$ to $T_P$.
Using the fact that $\beta(\Xcal_{A_{\Sigma}})$ is closed under extensions, we conclude that all split-projective modules in $\beta(\Xcal_{A_{\Sigma}})$ are Ext-projective such that $T_P$ becomes a direct summand of $T_{\Sigma}$. Therefore, we get $Hom_A(P,T_{\Sigma})\not= 0$ for all indecomposable projective $A$-modules $P$, telling that $T_{\Sigma}$ is sincere and, thus, by \cite{AIR} (Proposition 2.2), $\tau$-tilting. 

ad(3): From (2) we deduce that for a support $\tau$-tilting $A$-module $T$ and a finitely generated projective $A$-module $P=Ae$ we have $Hom_A(Ae,T)=0$ if and only if $A_{\Sigma_T}\otimes_A Ae=0$. Thus, $T$  belongs to $s\tau\mbox{-}tilt(A/AeA)$ if and only if $A_{\Sigma_T}$ is $e$-annihilating. For the last part of the statement see the proof of Theorem \ref{hered tilt}(3).
\end{proof}

In contrast to the hereditary case, in general, tilting $A$-modules do not arise from universal localisations.

\begin{example}\label{ex not induced}
Consider the Nakayama algebra $A:=A_3^2$ and the tilting $A$-module 
$$T:=P_2\oplus P_1\oplus S_2.$$ 
The associated universal localisation $A_{\Sigma_T}$ of $A$ is given by localising at the $A$-module $S_2$ (see Example \ref{Ex Nak}) and, therefore, the map $f:A\rightarrow A_{\Sigma_T}$ is not monomorphic. Moreover, since the identity map on $A$ is the only monomorphic universal localisation of $A$ (up to epiclasses), $T$ cannot arise from universal localisation.
\end{example}

In the hereditary setting of Section \ref{TULHA}, we compared a basic support tilting $A$-module $T$ directly with its associated universal localisation $A_{\Sigma_T}$ (see Proposition \ref{prop split}). The following proposition initiates a similar comparison in the given Nakayama context. 

\begin{proposition}\label{propsilt}
Let $A$ be a Nakayama algebra and $T$ be a basic support $\tau$-tilting $A$-module.
\begin{enumerate}
\item If $T'$ is an indecomposable direct summand of $T$, then the following are equivalent.
\begin{enumerate}
\item[(i)] $T'$ is not split-projective in $GenT$;
\item[(ii)] $T'$ belongs to $^*\!\Xcal_{A_{\Sigma_T}}$.
\end{enumerate} 
\item If $X$ is an indecomposable $A$-module in $^*\!\Xcal_{A_{\Sigma_T}}$, then the following are equivalent.
\begin{enumerate}
\item[(i)] $X\in addT$;
\item[(ii)] $X\in Gen(\Xcal_{A_{\Sigma_T}})$;
\item[(iii)] $X\in GenT=\beta(\Xcal_{A_{\Sigma_T}})$.
\end{enumerate}
\end{enumerate}
\end{proposition}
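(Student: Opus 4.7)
The proof would proceed by combining three results established earlier: Proposition \ref{torsNak}, which yields $\Xcal_{A_{\Sigma_T}}=\alpha(GenT)$ and $GenT=\beta(\Xcal_{A_{\Sigma_T}})$; Proposition \ref{prop uniloc}(1), which characterises $^*\!\Xcal_{A_{\Sigma_T}}$ as the class of $X$ such that $Hom_A(\sigma_0^X,C)$ is an isomorphism for every $C\in\Xcal_{A_{\Sigma_T}}$; and Theorem \ref{AIR1}, identifying the indecomposable summands of $T$ with the indecomposable Ext-projectives of $GenT$. Throughout, the uniserial structure of indecomposable modules (Proposition \ref{Nakayama}) and the restricted form of non-split extensions given by Lemma \ref{Nakext} are the Nakayama-specific inputs.

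For part (1), the implication (ii)$\Rightarrow$(i) is quick. Arguing by contrapositive, if $T'$ were split-projective in $GenT$, Lemma \ref{torsion}(3) would hand us a nonzero quotient of $T'$ inside $\alpha(GenT)=\Xcal_{A_{\Sigma_T}}$, so $Hom_A(T',\Xcal_{A_{\Sigma_T}})\neq 0$ would force $T'\notin{}^*\!\Xcal_{A_{\Sigma_T}}$. For (i)$\Rightarrow$(ii), I would first rule out $T'\in\alpha(GenT)$: combining the $\alpha$-condition with Ext-projectivity of $T'$ would split every surjection onto $T'$ in $GenT$, contradicting non-split-projectivity. To place $T'$ in $^*\!\Xcal_{A_{\Sigma_T}}$ via Proposition \ref{prop uniloc}(1), the $Ext_A^1$-vanishing against $\alpha(GenT)\subseteq GenT$ is automatic from Ext-projectivity, and the Hom- and $\sigma_1$-vanishings rest on the auxiliary observation that $\alpha(GenT)$ is closed under those submodules lying in $GenT$ (the kernel condition defining $\alpha$ transports along injective maps). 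A hypothetical nonzero map from $T'$ (or from the image of $\sigma_1^{T'}$) into some $C\in\alpha(GenT)$ would therefore factor through a surjection $T'\twoheadrightarrow I$ with $I\in\alpha(GenT)$, and Lemma \ref{Nakext} together with the Ext-projectivity of $T'$ would be used to argue that the existence of such a quotient already places $T'$ inside $\alpha(GenT)$, contradicting the first step.

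For part (2), (i)$\Rightarrow$(iii) is immediate from $GenT=\beta(\Xcal_{A_{\Sigma_T}})$, and (iii)$\Rightarrow$(ii) is handled by property $(*)$ in the proof of Lemma \ref{torsion}(1): an $X\in\beta(\Xcal_{A_{\Sigma_T}})\setminus Gen(\Xcal_{A_{\Sigma_T}})$ would admit a surjection onto an indecomposable $Z\in\Xcal_{A_{\Sigma_T}}$, contradicting $X\in{}^*\!\Xcal_{A_{\Sigma_T}}$. For (ii)$\Rightarrow$(i), the goal is to show $X$ is Ext-projective in $GenT$, so that Theorem \ref{AIR1} places $X$ in $addT$. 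Writing $GenT=\beta(\Xcal_{A_{\Sigma_T}})$, extension-closedness of $Ext_A^1$-vanishing reduces the task to $Ext_A^1(X,Q)=0$ for $Q\in Gen(\Xcal_{A_{\Sigma_T}})$. Pulling back along a surjection $C\twoheadrightarrow Q$ with $C\in\Xcal_{A_{\Sigma_T}}$ and using the long exact sequence reduces further to $Ext_A^1(X,C)=0$ (immediate from $X\in{}^*\!\Xcal_{A_{\Sigma_T}}$) together with a higher-degree Ext-vanishing onto the uniserial kernel, which I would extract from Lemma \ref{Nakext} and the homological comparison $Ext_A^1\cong Ext_{A_{\Sigma_T}}^1$ on $\Xcal_{A_{\Sigma_T}}$-modules available through the universal localisation $f:A\to A_{\Sigma_T}$.

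The main obstacle is bridging torsion-theoretic structure ($\alpha$, $\beta$) to homological orthogonality (${}^*\!\Xcal$): both the Hom-vanishing step in (1)(i)$\Rightarrow$(ii) and the Ext-projectivity step in (2)(ii)$\Rightarrow$(i) require turning the mere existence of a surjection onto an object of $\alpha(GenT)$ into a genuine membership in $\alpha(GenT)$. For this, the uniserial structure of indecomposable Nakayama modules, the one-dimensionality of non-zero Hom spaces between them, and the tight form of extensions recorded in Lemma \ref{Nakext} are essential, effectively allowing the arc-combinatorics of Section \ref{NAKUNI} to be used implicitly in the diagram chases.
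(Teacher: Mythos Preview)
Your overall framing is right and the easy directions --- (1)(ii)$\Rightarrow$(i) via Lemma \ref{torsion}(3), and the implications (i)$\Rightarrow$(iii)$\Rightarrow$(ii) in part (2) via property $(*)$ --- match the paper. The gaps are in the two hard implications.

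In (1)(i)$\Rightarrow$(ii), your key step is the claim that an Ext-projective $T'$ with a nonzero quotient $I\in\alpha(GenT)$ must itself lie in $\alpha(GenT)$. You do not justify this, and it is not a consequence of the closure of $\alpha(GenT)$ under $GenT$-submodules: for $g:Y\to T'$ in $GenT$, composing with $T'\twoheadrightarrow I$ only gives $ker(g)\subseteq ker(Y\to I)\in GenT$, which does not force $ker(g)\in GenT$. The paper does not try to place $T'$ in $\alpha(GenT)$ at all. Instead it exploits non-split-projectivity directly: choose a non-split surjection $\tilde T\twoheadrightarrow T'$ with $\tilde T$ split-projective, observe that the composite $\tilde f:\tilde T\to X$ has $ker\tilde f\in GenT$ because $X\in\alpha(GenT)$, and then the resulting diagram (of uniserial modules, via Lemma \ref{Nakext}) yields a non-split extension of $T'$ by $ker\tilde f\in GenT$, contradicting Ext-projectivity. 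The same device handles the $\sigma_1$-vanishing.

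In (2)(ii)$\Rightarrow$(i), your homological reduction breaks down. The long exact sequence for $0\to K\to C\to Q\to 0$ leaves you needing $Ext_A^2(X,K)=0$, but the comparison $Ext_A^1\cong Ext_{A_{\Sigma_T}}^1$ you invoke applies only between $A_{\Sigma_T}$-modules, whereas $X\in{}^*\Xcal_{A_{\Sigma_T}}$ is precisely an $A_{\Sigma_T}$-trivial module (so $A_{\Sigma_T}\otimes_A X=0$), and the localisation is not homological in general, so $Ext_A^2$ is simply not controlled. The paper avoids this by running the cycle as (iii)$\Rightarrow$(i) and arguing by contradiction: assuming $Ext_A^1(X,M)\neq 0$ for some indecomposable $M\in\beta(\Xcal_{A_{\Sigma_T}})$, it builds a uniserial diagram from the minimal projective presentation of $X$ and Lemma \ref{Nakext}, then repeatedly uses Lemma \ref{torsion}(3) together with the three vanishing conditions defining ${}^*\Xcal_{A_{\Sigma_T}}$ to push a quotient of the relevant kernel into $\Xcal_{A_{\Sigma_T}}$, eventually producing an indecomposable $X_T'\in\Xcal_{A_{\Sigma_T}}$ with $Ext_A^1(X,X_T')\neq 0$, a contradiction. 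No higher Ext or change-of-rings is needed.
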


\begin{proof}
ad(1): $(i)\Leftarrow (ii):$ If $T'$ lies in $^*\!\Xcal_{A_{\Sigma_T}}$, we have $Hom_A(T',\Xcal_{A_{\Sigma_T}})=0$ and, therefore, by Lemma \ref{torsion}(3), $T'$ cannot be split-projective in $GenT$.\\
$(i)\Rightarrow (ii):$ Now assume that $T'$ is not split-projective in $GenT$. We have to show that $T'$ belongs to $^*\!\!\Xcal_{A_{\Sigma_T}}$. Equivalently, we will show that
\begin{enumerate}[(I)]
\item $Ext_A^1(T',\Xcal_{A_{\Sigma_T}})=0$;
\item $Hom_A(T',\Xcal_{A_{\Sigma_T}})=0$;
\item $Hom_A(\sigma_1^{T'},\Xcal_{A_{\Sigma_T}})=0$.
\end{enumerate}
(I) follows, in particular, from the fact that $T'$ is Ext-projective in $\beta(\Xcal_{A_{\Sigma_T}})$. To prove (II) we suppose that there is an indecomposable $A$-module $X$ in $\Xcal_{A_{\Sigma_T}}$ with a non-trivial map $f:T'\rightarrow X$. By assumption, we get the following commutative diagram of indecomposable $A$-modules
$$\xymatrix{ & \tilde{T}\ar@{>>}[d]\ar[dr]^{\tilde{f}} & \\ ker(\tilde{f})\ar@{^{(}->}[ur]\ar@{>>}[d] & T'\ar@{>>}[d]\ar[r]^{f} & X\\ ker(f)\ar@{^{(}->}[ur] & Im(f)\ar@{^{(}->}[ur] &}$$
where $\tilde{T}$ is split-projective in $\beta(\Xcal_{A_{\Sigma_T}})=GenT$. Since $X$ lies in $\Xcal_{A_{\Sigma_T}}$, we know that $ker(\tilde{f})$ is in $\beta(\Xcal_{A_{\Sigma_T}})$. Consequently, we have $Ext_A^1(T',\beta(\Xcal_{A_{\Sigma_T}}))\not= 0$, a contradiction, since $T'$ is Ext-projective in $\beta(\Xcal_{A_{\Sigma_T}})$.
For (III) we have to show that every map $g:P_1^{T'}\rightarrow X$ for $X$ in $\Xcal_{A_{\Sigma_T}}$ factors through $ker(\pi^{T'})$, where 
$$\xymatrix{P_1^{T'}\ar[r]^{\sigma_0^{T'}} & P_0^{T'}\ar[r]^{\pi^{T'}} & T'\ar[r] & 0}$$ 
describes the minimal projective presentation of $T'$ in $A\mbox{-}mod$. Let us suppose that we have a non-trivial map $g:P_1^{T'}\rightarrow X$ not factoring through $ker(\pi^{T'})$ and with $X$ indecomposable in $\Xcal_{A_{\Sigma_T}}$. Then we get the following commutative diagram of indecomposable $A$-modules
$$\xymatrix{P_1^{T'}\ar@{>>}[d]\ar[r]^g & X\ar@{>>}[d]\ar[r]^{g'} & P_0^{T'}\ar@{>>}[d] \\ Im(g)\ar@{^{(}->}[ur]\ar@{>>}[d] & M\ar@{>>}[d]\ar@{^{(}->}[ur] & T'\ar@{>>}[d]^{\pi'}\\ ker(\pi^{T'})\ar@{^{(}->}[ur] & ker(\pi')\ar@{^{(}->}[ur] & cok(g')}$$
where the $A$-module $M$ lies in $\beta(\Xcal_{A_{\Sigma_T}})$, since $X$ belongs to $\Xcal_{A_{\Sigma_T}}$. Therefore, we get $Ext_A^1(T',\beta(\Xcal_{A_{\Sigma_T}}))\not= 0$, again a contradiction.

ad(2): $(i)\Rightarrow (ii):$ Since $X$ belongs to $^*\!\Xcal_{A_{\Sigma_T}}$, we know that $X$ cannot surject onto any object in $\Xcal_{A_{\Sigma_T}}$. Using that $X\in addT$, we know that $X$ lies in $\beta(\Xcal_{A_{\Sigma_T}})$ and therefore, by Lemma \ref{torsion}(3), we get $X\in Gen(\Xcal_{A_{\Sigma_T}})$. Clearly, we have $(ii)\Rightarrow (iii)$.

$(iii)\Rightarrow (i):$ We have to show that $X$ is Ext-projective in $\beta(\Xcal_{A_{\Sigma_T}})$. Suppose there is an indecomposable $A$-module $M$ in $\beta(\Xcal_{A_{\Sigma_T}})$ with $Ext_A^1(X,M)\not= 0$. Using Lemma \ref{Nakext} and the minimal projective presentation of $X$, we get the following commutative diagram of indecomposable $A$-modules
$$\xymatrix{ & & P_0^X\ar@{>>}[d]\ar@{>>}@/^2pc/[dd]^{\pi^X}\\P_1^X\ar@{>>}[d] & L\ar@{>>}[d]\ar@{^{(}->}[ur] & Y_1\ar@{>>}[d]^{\pi} \\ ker(\pi^X)\ar@{^{(}->}[ur]\ar@{>>}[d] & M\ar@{>>}[d]\ar@{^{(}->}[ur] & X\\ ker(\pi)\ar@{^{(}->}[ur] & Y_2\ar@{^{(}->}[ur] & }$$
(Note that $Y_2$ possibly can be zero. In this situation, we have $ker(\pi^X)=L$ and $ker(\pi)=M$.)
Now since $M$ belongs to $\beta(\Xcal_{A_{\Sigma_T}})$, by Lemma \ref{torsion}(3), there is an indecomposable $A$-module $X_T$ in $\Xcal_{A_{\Sigma_T}}$ such that the projective cover $P_0^M$ of $M$ in $A\mbox{-}mod$ surjects onto $X_T$. Since $X$ lies in $^*\!\Xcal_{A_{\Sigma_T}}$, we have $Hom_A(\sigma_1^X,\Xcal_{A_{\Sigma_T}})=0$ and, therefore, $L$ has to surject onto $X_T$. Moreover, we know that 
$$Ext_A^1(X,\Xcal_{A_{\Sigma_T}})=0$$ 
such that $Y_2$ (or $M$ in case $Y_2$ equals zero) must surject onto $X_T$ via some map $g$ with $ker(g)$ in $\beta(\Xcal_{A_{\Sigma_T}})$. If $Y_2\not= 0$ and $Y_2\not= X_T$, by using again Lemma \ref{torsion}(3), we can repeat the previous argument with $ker(g)$ instead of $M$ to conclude that $ker(g)$ has to surject onto an indecomposable $A$-module in $\Xcal_{A_{\Sigma_T}}$ such that, after finitely many steps, we get that $ker(\pi)$ belongs to $\beta(\Xcal_{A_{\Sigma_T}})$. Note that this conclusion is immediate, if $Y_2=0$ or $Y_2=X_T$. But now $ker(\pi)$ must surject onto some indecomposable $A$-module $X_T'$ in $\Xcal_{A_{\Sigma_T}}$ with $Ext_A^1(X,X_T')\not= 0$, a contradiction, since $Ext_A^1(X,\Xcal_{A_{\Sigma_T}})$ must be zero, by assumption.
\end{proof}

The following theorem allows us to read off completely the associated universal localisation $A_{\Sigma_T}$ from a basic support $\tau$-tilting $A$-module $T$.

\begin{theorem}\label{Nak comp}
Let $A$ be a Nakayama algebra and $T$ be a basic support $\tau$-tilting $A$-module which is $\tau$-tilting over the algebra $A/AeA\,$ for an idempotent $e$ in $A$. Then $A_{\Sigma_T}$ is given by localising at the set $\Sigma_T'$, containing the $A$-module $Ae$ and all non split-projective indecomposable direct summands of $T$ in $GenT$.
\end{theorem}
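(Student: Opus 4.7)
The plan is to prove that $A_{\Sigma_T}$ coincides with the universal localisation $B := A_{\Sigma_T'}$ as epiclasses of $A$. By the bijection $\Psi_A$ of Theorem \ref{Naktilt}(1), each universal localisation is determined by its associated basic support $\tau$-tilting module, so the goal reduces to showing that the support $\tau$-tilting $A$-module corresponding to $B$ agrees with $T$ up to equivalence in $s\tau\mbox{-}tilt(A)$.

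First, I would verify the inclusion $\Sigma_T' \subseteq {}^*\!\Xcal_{A_{\Sigma_T}}$. For the module $Ae$: since $T$ is in $s\tau\mbox{-}tilt(A/AeA)$, Theorem \ref{Naktilt}(3) gives that $A_{\Sigma_T}$ is $e$-annihilating, so every $X$ in $\Xcal_{A_{\Sigma_T}}$ is an $A/AeA$-module and hence $Hom_A(Ae, X) = 0$; the vanishing of $Ext_A^1(Ae, X)$ and the triviality of $\sigma_1^{Ae}$ are immediate since $Ae$ is projective. For the non split-projective indecomposable direct summands of $T$ in $GenT$, membership in $^*\!\Xcal_{A_{\Sigma_T}}$ follows directly from Proposition \ref{propsilt}(1), $(i) \Rightarrow (ii)$. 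Combining with the universal property of universal localisation (Theorem \ref{uni loc}), this yields a ring homomorphism $B \to A_{\Sigma_T}$ over $A$, equivalently $\Xcal_{A_{\Sigma_T}} \subseteq \Xcal_B$, and passing through $\beta$ using Proposition \ref{torsNak} gives $GenT = \beta(\Xcal_{A_{\Sigma_T}}) \subseteq \beta(\Xcal_B)$.

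The main obstacle is proving the reverse inclusion $\beta(\Xcal_B) \subseteq GenT$. Given an indecomposable $Z$ in $\Xcal_B$, I need to show $Z \in GenT$. The hypothesis forces $Hom_A(Ae, Z) = 0$ (so $Z$ is an $A/AeA$-module) and, for each non split-projective indecomposable summand $T'$ of $T$ in $GenT$, that $Hom_A(T', Z) = Ext_A^1(T', Z) = 0$ together with a factorisation condition through $\sigma_1^{T'}$. My plan is to combine these orthogonality conditions with the uniserial structure of indecomposable Nakayama modules (Proposition \ref{Nakayama}) and the special form of non-split extensions in $A\mbox{-}mod$ (Lemma \ref{Nakext}), proceeding by induction on the Loewy length of $Z$: by the sincerity of $T$ over $A/AeA$ (a consequence of $T$ being $\tau$-tilting there, cf.\ the proof of Theorem \ref{Naktilt}(2)), the projective cover of $Z$ in $A/AeA\mbox{-}mod$ maps non-trivially to some summand of $T$; the orthogonality constraints imposed by $\Sigma_T'$ then force this map to factor through a split-projective summand of $T$ in $GenT$, which by Lemma \ref{torsion}(3) places $Z$ in $GenT$. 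Once $\beta(\Xcal_B) = GenT$ is established, both $T_B$ (via Theorem \ref{Naktilt}(1)) and $T$ (via Theorem \ref{AIR1}) coincide with the sum of indecomposable Ext-projectives in the common torsion class $GenT$, yielding $B = A_{\Sigma_T}$ as required.
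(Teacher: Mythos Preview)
Your first inclusion is fine and matches the paper: showing $\Sigma_T'\subseteq{}^*\Xcal_{A_{\Sigma_T}}$ via Theorem~\ref{Naktilt}(3) and Proposition~\ref{propsilt}(1) gives $\Xcal_{A_{\Sigma_T}}\subseteq\Xcal_B$ and hence $GenT\subseteq\beta(\Xcal_B)$.

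The gap is in the reverse inclusion. Your sketch asserts that for $Z\in\Xcal_B$ the projective cover $\bar P$ of $Z$ over $A/AeA$ maps nontrivially to some summand of $T$, and that ``the orthogonality constraints imposed by $\Sigma_T'$ then force this map to factor through a split-projective summand of $T$ in $GenT$, which by Lemma~\ref{torsion}(3) places $Z$ in $GenT$.'' Neither step is justified. Sincerity only produces \emph{some} nonzero map $\bar P\to T$, with no a priori relation to the surjection $\bar P\twoheadrightarrow Z$; to conclude $Z\in GenT$ you would need an indecomposable summand of $T$ with the \emph{same top} as $Z$ and Loewy length at least $l(Z)$, and the orthogonality conditions on the non split-projective summands of $T$ (namely $Hom_A(T',Z)=Ext_A^1(T',Z)=Hom_A(\sigma_1^{T'},Z)=0$) do not obviously force this. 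Moreover, Lemma~\ref{torsion}(3) goes the wrong way for your purpose: it says split-projectives in a torsion class admit quotients in $\alpha$ of that class, which does not produce a covering of $Z$ by $T$. The announced induction on $l(Z)$ is not set up.

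The paper takes a different route and avoids comparing torsion classes altogether: it proves directly that ${}^*\Xcal_{A_{\Sigma_T}}={}^*\Xcal_{A_{\Sigma_T'}}$. The nontrivial inclusion ${}^*\Xcal_{A_{\Sigma_T}}\subseteq{}^*\Xcal_{A_{\Sigma_T'}}$ is handled by taking an indecomposable $X\in{}^*\Xcal_{A_{\Sigma_T}}$ and splitting into cases. If $X\in GenT$, Proposition~\ref{propsilt}(2) forces $X\in addT$ and then Proposition~\ref{propsilt}(1) gives $X\in\Sigma_T'$. If $Hom_A(P_0^X,\Xcal_{A_{\Sigma_T}})=0$, both $P_0^X$ and $P_1^X$ lie in $add(Ae)\subseteq\Sigma_T'$. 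In the remaining case the paper builds, via a diagram chase exploiting uniseriality, two auxiliary modules $X_1,X_2\in{}^*\Xcal_{A_{\Sigma_T}}\cap Gen(\Xcal_{A_{\Sigma_T}})$ whose minimal projective presentations reassemble that of $X$; by the first case $X_1,X_2\in\Sigma_T'$, hence $X\in{}^*\Xcal_{A_{\Sigma_T'}}$. If you wish to salvage your torsion-class approach, you will need a genuine argument producing, for each $Z\in\Xcal_B$, a summand of $T$ surjecting onto $Z$; this is essentially as hard as what the paper does, and the tools you cite do not supply it.
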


\begin{proof}
One way of proving the above statement is to show that $^*\!\Xcal_{A_{\Sigma_T}}$ equals $^*\!\Xcal_{A_{\Sigma_T'}}$. By Theorem \ref{Naktilt}(3) and Proposition \ref{propsilt}(1), we know that 
$$^*\!\Xcal_{A_{\Sigma_T'}}\subseteq\, ^*\!\Xcal_{A_{\Sigma_T}}.$$ 
Now take $X$ indecomposable in $^*\!\Xcal_{A_{\Sigma_T}}$. If $X$ belongs to $GenT=\beta(\Xcal_{A_{\Sigma_T}})$, by Proposition \ref{propsilt}(2), it already lies in $addT$ and, thus, using Proposition \ref{propsilt}(1), we get that $X$ lies in $\Sigma_T'$. Moreover, if we consider the minimal projective resolution of $X$ in $A\mbox{-}mod$ and assume that $Hom_A(P_0^X,\Xcal_{A_{\Sigma_T}})=0$, using Theorem \ref{Naktilt}(3), we get that $P_0^X$ is a direct summand of $Ae$ and already belongs to $\Sigma_T'$. Since $X$ lies in $^*\!\!\Xcal_{A_{\Sigma_T}}$, we also get that $Hom_A(P_1^X,\Xcal_{A_{\Sigma_T}})=0$. Again by Theorem \ref{Naktilt}(3), this implies that $P_1^X$ belongs to $\Sigma_T'$ and, therefore, $X$ belongs to $^*\!\Xcal_{A_{\Sigma_T'}}$. Consequently, we can assume that $X$ does not belong to $Gen(\Xcal_{A_{\Sigma_T}})\subseteq GenT$ and that there is an indecomposable $A$-module $X_T$ in $\Xcal_{A_{\Sigma_T}}$ together with a non-trivial map $g:P_0^X\rightarrow X_T$ yielding the following commutative diagram of indecomposable $A$-modules
\begin{equation}\label{diagram2}
\xymatrix{P_1^{X_1}=P_1^X\ar@{>>}[d] & P_1^{X_2}=P_0^X\ar@{>>}[d] & P_0^{X_2}=P_0^{X_1}\ar@{>>}[d] \\ ker(\pi^X)\ar@{^{(}->}[ur]\ar@{>>}[d] & ker(\pi^{X_2})\ar@{>>}[d]\ar@{^{(}->}[ur] & X_T\ar@{>>}[d]\\ ker(\pi^{X_1})\ar@{^{(}->}[ur] & Im(g)\ar@{>>}[d]\ar@{^{(}->}[ur] & X_1\ar@{>>}[d]\\ & X\ar@{^{(}->}[ur]^i & X_2=cok(i)}\end{equation}
Note that, by assumption on $X$, $Hom_A(X,X_T)$ must be zero. Now we will show that the $A$-module $X_T$ can be chosen in a way such that $X_1$ and $X_2$ belong to $^*\!\Xcal_{A_{\Sigma_T}}$. Equivalently, we will show that for $i=1,2$
\begin{enumerate}[(I)]
\item $Hom_A(X_i,\Xcal_{A_{\Sigma_T}})=0$;
\item $Ext_A^1(X_i,\Xcal_{A_{\Sigma_T}})=0$;
\item $Hom_A(\sigma_1^{X_i},\Xcal_{A_{\Sigma_T}})=0$.
\end{enumerate}
(I): Since $X$ is in $^*\!\Xcal_{A_{\Sigma_T}}$, we have $Hom_A(X,\Xcal_{A_{\Sigma_T}})=0$. Thus, every map from $X_1$ to an indecomposable $A$-module in $\Xcal_{A_{\Sigma_T}}$ must factor through $X_2$. Suppose we have a non-trivial map $f:X_2\rightarrow X_T'$ with $X_T'$ indecomposable in $\Xcal_{A_{\Sigma_T}}$. We can lift $f$ to a map $\tilde{f}:X_T\rightarrow X_T'$ such that the inclusion $Im(g)\rightarrow X_T$ factors through $ker(\tilde{f})$. Since $ker(\tilde{f})$ lies in $\Xcal_{A_{\Sigma_T}}$, we can replace $X_T$ by $ker(\tilde{f})$ and repeat the whole argument with different $X_1$ and $X_2$. Since $X$ does not belong to $Gen(\Xcal_{A_{\Sigma_T}})$, we know that $Im(g)$ does not lie in $\Xcal_{A_{\Sigma_T}}$ and, thus, after finitely many steps (I) is fulfilled.
For (II), we first apply the functor $Hom_A(-,X_T')$ for $X_T'$ in $\Xcal_{A_{\Sigma_T}}$ to the short exact sequence
$$\xymatrix{0\ar[r] & X\ar[r]^i & X_1\ar[r] & X_2\ar[r] & 0}$$
to see that it suffices to show that $Ext_A^1(X_2,\Xcal_{A_{\Sigma_T}})=0$. Now suppose that there is an indecomposable $X_T'$ in $\Xcal_{A_{\Sigma_T}}$ with $Ext_A^1(X_2,X_T')\not= 0$. Since we have $Hom_A(X,\Xcal_{A_{\Sigma_T}})=0$, by using Lemma \ref{Nakext}, we conclude that there is a non-trivial morphism from $ker(\pi^{X_2})$ to $X_1$ factoring through $X_T'$. Since $X$ does not belong to $Gen(\Xcal_{A_{\Sigma_T}})$, the $A$-module $X_T'$ cannot surject onto $X$. Thus, we can replace $X_T$ by $X_T'$ and repeat the whole argument with different $X_1$ and $X_2$, until (I) and (II) are fulfilled. Regarding (III), we will first show that $Hom_A(\sigma_1^{X_2},\Xcal_{A_{\Sigma_T}})=0$. Suppose there is an indecomposable $A$-module $X_T'$ in $\Xcal_{A_{\Sigma_T}}$ and a map $f:P_1^{X_2}\rightarrow X_T'$ not factoring through $ker(\pi^{X_2})$. Consequently, we get a morphism 
$$h:X_T'\rightarrow X_T$$ 
such that $X_2$ surjects onto the cokernel of $h$, which belongs to $\Xcal_{A_{\Sigma_T}}$. This yields a contradiction. Using that $Hom_A(\sigma_1^{X_2},\Xcal_{A_{\Sigma_T}})=0$ and the fact that $Hom_A(\sigma_1^X,\Xcal_{A_{\Sigma_T}})=0$ as well as $Ext_A^1(X,\Xcal_{A_{\Sigma_T}})=0$, since $X$ lies in $^*\!\Xcal_{A_{\Sigma_T}}$, we can also conclude that $Hom_A(\sigma_1^{X_1},\Xcal_{A_{\Sigma_T}})=0$. 

Altogether, we have seen that we can choose the $A$-module $X_T$ in $\Xcal_{A_{\Sigma_T}}$ in a way such that $X_1$ and $X_2$ belong to $^*\!\Xcal_{A_{\Sigma_T}}$.
Since, by construction, $X_1$ and $X_2$ also belong to $Gen(\Xcal_{A_{\Sigma_T}})$, they belong to $\Sigma_T'$, by Proposition \ref{propsilt}. Consequently, keeping in mind diagram (\ref{diagram2}), we conclude that $X$ lies in $^*\!\Xcal_{A_{\Sigma_T'}}$. This finishes the proof.
\end{proof}

In particular, if $T$ is a $\tau$-tilting $A$-module, then $A_{\Sigma_T}$ is just given by localising at the set of indecomposable non split-projective $A$-modules in $addT$.
Let us finish with an example illustrating the previous result.

\begin{example}
Let $A$ be the self-injective $\mathbb{K}$-algebra $\tilde{A}_3^3$. By Theorem \ref{Naktilt} and Corollary \ref{cor pic}, we know that
$$|s\tau\mbox{-}tilt(A)|=|uniloc(A)|=\left(\begin{array}{c} 6 \\ 3 \end{array}\right)=20.$$ 
If we restrict ourselves to proper $\tau$-tilting $A$-modules, by Theorem \ref{Naktilt}(2) and the classification of the universal localisations of $A$, we get that 
$$|\tau\mbox{-}tilt(A)|=|uniloc^p(A)|=10.$$ 
The following table lists the $\tau$-tilting modules and their associated universal localisations, indicated by $\Sigma_T'$. \vspace*{0.4cm}\center
\begin{tabular}{|c|c|}
$\bf{\tau\mbox{-}tilt(A)}$ & $\bf{uniloc^p(A)}$ \\ \hline $T:=A=P_1\oplus P_2\oplus P_3$ & $\Sigma_T'=\{0\}$ \\ $T:=P_1\oplus P_3\oplus S_1$ & $\Sigma_T'=\{S_1\}$ \\ $T:=P_1\oplus P_2\oplus S_2$ & $\Sigma_T'=\{S_2\}$ \\ $T:=P_2\oplus P_3\oplus S_3$ & $\Sigma_T'=\{S_3\}$ \\ $T:=P_1\oplus P_1/rad^2P_1\oplus S_1$ & $\Sigma_T'=\{S_1,P_1/rad^2P_1\}$ \\ $T:=P_2\oplus P_2/rad^2P_2\oplus S_2$ & $\Sigma_T'=\{S_2,P_2/rad^2P_2\}$ \\ $T:=P_3\oplus P_3/rad^2P_3\oplus S_3$ & $\Sigma_T'=\{S_3,P_3/rad^2P_3\}$ \\ $T:=P_1\oplus P_1/rad^2P_1\oplus S_2$ & $\Sigma_T'=\{P_1/rad^2P_1\}$ \\ $T:=P_2\oplus P_2/rad^2P_2\oplus S_3$ & $\Sigma_T'=\{P_2/rad^2P_2\}$ \\ $T:=P_3\oplus P_3/rad^2P_3\oplus S_1$ & $\Sigma_T'=\{P_3/rad^2P_3\}$ \\
\end{tabular}
\end{example}


\begin{thebibliography}{EFS}

\bibitem{AIR} T.Adachi, O.Iyama, I.Reiten, \textit{$\tau$-tilting theory}, arXiv:1210.1036.
\bibitem{AA} L.Angeleri H\"ugel, M.Archetti, \textit{Tilting modules and universal localization}, Forum Math. 24 (2012), no. 4, 709--731.
\bibitem{ALK1} L.Angeleri H\"{u}gel, S.Koenig, Q.Liu, \textit{Recollements and tilting objects}, J. Pure Appl. Algebra 215 (2011), no. 4, 420--438.
\bibitem{ALK2} L.Angeleri H\"{u}gel, S.Koenig, Q.Liu, \textit{On the uniqueness of stratifications of derived module categories}, J. Algebra, 359 (2012), 120--137.
\bibitem{AS} L.Angeleri H\"{u}gel, J.S\'anchez, \textit{Tilting modules arising from ring epimorphisms}, Algebr. Represent. Theory 14 (2011), no. 2, 217--246.
\bibitem{AS2} L.Angeleri H\"{u}gel, J.S\'anchez, \textit{Tilting modules over tame hereditary algebras}, J. Reine Angew. Math., DOI: 10.1515/crelle-2012-0040.
\bibitem{ASS} I.Assem, D.Simson, A.Skowro\'nski, \textit{Elements of the representation theory of associative algebras. Vol. 1. Techniques of representation theory.} London Mathematical Society Student Texts, 65. Cambridge University Press, Cambridge, (2006).
\bibitem{APR} M.Auslander, M.I.Platzeck, I.Reiten, \textit{Coxeter functors without diagrams}, Trans. Amer. Math. Soc. 250 (1979), 1--46. 
\bibitem{BY} T.Br\"{u}stle, D.Yang, \textit{Ordered exchange graphs}, arXiv:1302.6045.
\bibitem{CX1} H.Chen, C.C.Xi, \textit{Good tilting modules and recollements of derived module categories}, Proc. Lond. Math. Soc. 104 (2012), no. 5, 959--996.
\bibitem{CX2} H.Chen, C.C.Xi, \textit{Homological ring epimorphisms and recollements from exact pairs. I}, arXiv:1203.5168. 
\bibitem{CX3} H.Chen, C.C.Xi, \textit{Ringel modules and homological subcategories}, arXiv:1206.0522.
\bibitem{Cohn} P.M.Cohn, \textit{Free rings and their relations} (Second edition), Academic Press, London Math. Soc. Monographs, 19 (1985).
\bibitem{D} N.D.Dichev, \textit{Thick subcategories for quiver representations}, Ph.D. Thesis, Universit\"{a}t Paderborn, Germany, 2009.
\bibitem{GdP} P.Gabriel, J.de la Pe\~na, \textit{Quotients of representation-finite algebras}, Comm. Algebra 15 (1987), no. 1-2, 279--307.
\bibitem{GeLe} W.Geigle, H.Lenzing, \textit{Perpendicular categories with applications to representations and sheaves}, J. Algebra 144 (1991), no. 2, 273--343. 
\bibitem{HS} D.Happel, U.Seidel, \textit{Piecewise hereditary Nakayama algebras}, Algebr. Represent. Theory 13 (2010), no. 6, 693--704.
\bibitem{H} M.Hoshino, \textit{Modules without self-extensions and Nakayama's conjecture}, Arch. Math. (Basel) 43 (1984), no. 6, 493--500.
\bibitem {IT} C.Ingalls, H.Thomas, \textit{Noncrossing partitions and representations of quivers}, Compos. Math. 145 (2009), no. 6, 1533--1562.
\bibitem{I} O.Iyama, \textit{Rejective subcategories of artin algebras and orders}, arXiv:math/0311281.
\bibitem{K} B.Keller, \textit{A remark on the generalized smashing conjecture}, Manuscripta Math. 84 (1994), no. 2, 193--198.
\bibitem{KS} H.Krause, J.{\v S}{\v{t}}\!ov{\'{\i}}{\v{c}}ek, \textit{The telescope conjecture for hereditary rings via Ext-orthogonal pairs}, Adv. Math. 225 (2010), no. 5, 2341--2364.
\bibitem{LY} Q.Liu, D.Yang, \textit{Blocks of group algebras are derived simple}, Math. Z. 272 (2012), no. 3-4, 913--920. 
\bibitem{MV} F.Marks, J.Vit\'oria, \textit{From ring epimorphisms to universal localisations}, Forum Math., DOI: 10.1515/forum-2012-0103.
\bibitem{NRS} A.Neeman, A.Ranicki, A.H.Schofield, \textit{Representations of algebras as universal localizations}, Math. Proc. Cambridge Philos. Soc. 136 (2004), no. 1, 105--117.
\bibitem{Sch} A.H.Schofield, \textit{Representations of rings over skew fields}, Cambridge University Press (1985).
\bibitem{Sch2} A.H.Schofield, \textit{Universal localisation for hereditary rings and quivers}. In Ring theory (Antwerp, 1985), volume 1197 of Lecture Notes in Math., Springer, Berlin, 1986, 149--164.
\bibitem{Sch3} A.H.Schofield, \textit{Severe right Ore sets and universal localisation}, arXiv:0708.0246.
\bibitem{Sch4} A.H.Schofield, \textit{Universal localisations of hereditary rings}, arXiv:0708.0257.
\bibitem{S} R.Schulz, \textit{Boundedness and periodicity of modules over QF rings}, J. Algebra 101 (1986), no. 2, 450--469. 
\bibitem{St} B.Stenstr{\"o}m, \textit{Rings of quotients}, Springer (1975).
\bibitem{T} H.Tachikawa, \textit{Quasi-Frobenius rings and generalizations. QF-3 and QF-1 rings}, Springer-Verlag,
Berlin (1973). Notes by Claus Michael Ringel, Lecture Notes in Mathematics, vol. 351.

\end{thebibliography}
\end{document}